\newtheorem{theorem}{Theorem}
\newtheorem{defi}[theorem]{Definition}
\newtheorem{lemma}[theorem]{Lemma}
\newtheorem{coro}[theorem]{Corollary}
\newtheorem{proposition}[theorem]{Proposition}
\newtheorem{example}[theorem]{Example}
\newtheorem{remark}[theorem]{Remark}
\begin{document}

	\title[A quantum shuffle approach to $U_q(C(2)^{(2)})$ and equitable presentation]{A quantum shuffle approach to quantum affine super algebra of type  $C(2)^{(2)}$ and its equitable presentation}
	
	\author[Xin Zhong]{Xin Zhong}
\address{School of Mathematical Sciences,  East China Normal University, Shanghai 200241, China\\
Current Address: No. 2 High School of East China Normal University, Yangtai Road 500, Baoshan, China}
	\email{52215500002@stu.ecnu.edu.cn}
	
	\author[N.H. Hu]{Naihong Hu$^*$}
\address{School of Mathematical Sciences, MOE Key Laboratory of Mathematics and Engineering Applications \& Shanghai Key Laboratory of PMMP, East China Normal University, Shanghai 200241, China}
	\email{nhhu@math.ecnu.edu.cn}
	\thanks{$^*$Corresponding author.
	The paper is supported by the NNSFC (Grant No.
	12171155) and in part by the Science and Technology Commission of Shanghai Municipality (No. 22DZ2229014).}
	\begin{abstract}
In this study, we focus on the positive part $U_q^{+}$ of the quantum affine superalgebra $U_q(C(2)^{(2)})$. This algebra admits a presentation with two two generators $e_{\alpha}$ and $e_{\delta-\alpha}$, which satisfy the cubic $q$-Serre relations.
According to the work of  Khoroshkin-Lukierski-Tolstoy, the Damiani and the Beck $PBW$ bases  exist for this superalgebra. In this paper, we utilize the $q$-shuffle superalgebra and Catalan words to present these two bases in a closed-form expression.
Ultimately, we present the bosonization of $U_q(C(2)^{(2)})$.
	\end{abstract}

	\keywords{$q$-Serre relations, $PBW$ basis, $q$-shuffle super-algebra, Catalan words, equitable presentation}
	\subjclass[2010]{Primary 17B37; Secondary  16T05}
	\maketitle
		\section{Introduction}
		\subsection{Background and Motivation}
		The $q$-deformed universal enveloping algebra $U_q(\widehat{\mathfrak{s l}_2})$ (\cite{KT}, \cite{CP}) appears in the topics of combinatorics, quantum algebras, and representation theory. The positive part $U_q^{+}$ of $U_q(\widehat{\mathfrak{s l}_2})$ is associative, noncommutative, and infinite-dimensional. It has a presentation with two generators and relations including the so-called $q$-Serre relations. The literature contains at least three $PBW$ bases for $U_q^{+}$, called the Damiani, the Beck, and the alternating $PBW$ bases. These $PBW$ bases are related via exponential formulas and quantum shuffle algebra (embedding the ``positive" subalgebra into the $q$-deformed shuffle algebra) was first developed independently in the 1990 by J. Green, and M. Rosso \cite{R2, G}.
		
		In \cite{D}, Damiani obtained a Poincaré-Birkhoff-Witt (or $PBW$) basis for $U_q^{+}=U_q^+(\widehat{\mathfrak{sl}_2})$. The basis elements $\left\{E_{n \delta+\alpha_0}\right\}_{n=0}^{\infty}$, $\left\{E_{n \delta+\alpha_1}\right\}_{n=0}^{\infty}$, $\left\{E_{n \delta}\right\}_{n=1}^{\infty}$ were defined recursively using a braid group action. In \cite{B}, Beck obtained a $PBW$ basis for $U_q^{+}$ by replacing $E_{n \delta}$ with an element $E_{n \delta}^{\text {Beck }}$ for $n \geq 1$. In \cite{BCP}, Beck, Chari, and Pressley showed that the elements $\left\{E_{n \delta}\right\}_{n=1}^{\infty}$ and $\left\{E_{n \delta}^{\text {Beck }}\right\}_{n=1}^{\infty}$ are related via an exponential formula.
		
		Based on the above work, Terwilliger used the Rosso embedding to obtain a closed form for the Damiani and the Beck $PBW$ basis elements as follows \cite{T2,T3}
\begin{gather*}
		E_{n \delta+\alpha_0} \mapsto q^{-2 n}\left(q-q^{-1}\right)^{2 n} x C_n, \quad \quad E_{n \delta+\alpha_1} \mapsto q^{-2 n}\left(q-q^{-1}\right)^{2 n} C_n y,
\end{gather*}
		for $n \geq 0$, and
\begin{gather*}
		E_{n \delta} \mapsto-q^{-2 n}\left(q-q^{-1}\right)^{2 n-1} C_n,\\
		E_{n \delta}^{\text {Beck }} \mapsto \frac{[2 n] q}{n} q^{-2 n}\left(q-q^{-1}\right)^{2 n-1} x C_{n-1} y,
\end{gather*}
		for $n \geq 1$.
		
		In \cite{T4}, Terwilliger introduced alternating words and obtained some $PBW$ bases. For instance, the elements $\left\{W_{-k}\right\}_{k=0}^{\infty},\left\{W_{k+1}\right\}_{k=0}^{\infty},\left\{\tilde{G}_{k+1}\right\}_{k=0}^{\infty}$ (in appropriate linear order) give a $PBW$ basis said to be alternating. Then he interpreted the alternating $PBW$ basis in terms of a $q$-shuffle algebra associated with quantum affine algebra $U_q(\widehat{\mathfrak{s l}_2})$ and showed how the alternating $PBW$ basis is related to the $PBW$ basis for $U_q^{+}$ found by Damiani in 1993.
	
	In \cite{ITW}, Ito, Terwilliger, and Weng introduced the equitable presentation for the quantum group $U_q(\mathfrak{sl}_2)$. This presentation is linked to tridiagonal pairs of linear transformations \cite{IT1,IT2}, Leonard pairs of linear transformations \cite{SGH}. The equitable presentation has been generalized to the  quantum group $U_q(\mathfrak{g})$ associated to symmetrizable Kac-Moody algebra $\mathfrak{g}$ \cite{T5}. Furthermore, The equitable presentation of the quantum superalgebra $\mathfrak{osp}_q(1|2)$, in which all generators appear on an equal footing, was exhibited, and a $q$-analog of the Bannai-Ito algebra is shown to arise as the covariance algebra of $\mathfrak{osp}_q(1|2)$ \cite{GVZ}. In \cite{SL}, Sun and Li provided the equitable presentation for a certain subalgebra of the two-parameter quantum group $U_{r, s}(\mathfrak{g})$ associated with a generalized Kac-Moody algebra corresponding to symmetrizable admissible Borcherds Cartan matrix. More generally, Hu-Pei-Zhang  \cite{HPZ} gave an equitable presentation in the sense of Terwilliger for the multiparameter quantum groups $U_{\mathbf{q}}(\mathfrak{g})$ associated to symmetrizable Kac-Moody algebras $\mathfrak{g}$ and multiparameter matrices $\mathbf{q}=\left(q_{i j}\right)$, which turn out to be constructed from the one-parameter quantum groups by twisting the product by Hopf $2$-cocycles \cite{HPZ}.

Consequently, a natural query arises: how can we extend the existing research to quantum affine superalgebras, such as $U_q(C(2)^{(2)})$ (\cite{KLT}).  This is a technically intricate yet fascinating open problem. In this paper and a preprint \cite{ZH2}, we aim to complete the relevant work for the quantum affine superalgebra $U_q(C(2)^{(2)})$.
		\subsection{Goal} Throughout the paper, we will focus on the quantum affine superalgebra $U_q(C(2)^{(2)})$ presented in \cite{KLT}.

The first goal of this paper is to establish an injective superalgebra homomorphism $\varphi $ from $U_q^{+}(C(2)^{(2)})$ to the $q$-shuffle superalgebra $\mathbb{V}$, that maps $A \mapsto x$ and $B \mapsto y$; this result is a direct consequence of Proposition \ref{shuffle serre}. Our subsequent goal is to express the KLT $PBW$ basis of Damiani type and Beck type in a closed form by utilizing Catalan words and the $q$-shuffle superalgebra. As direct corollaries thereof, we derive several relations involving the higher Catalan elements. Finally, we achieve a minimal bosonization of $U_q(C(2)^{(2)})$.

		\subsection{Further Applications} (1) The $q$-shuffle superalgebra and bosonization of $U_q(C(2)^{(2)})$ have played a fundamental role in guiding our work. In our recent work \cite{ZH1}, with the help of the above results, we discovered a new $q$-Onsager superalgebra $\mathcal{B}_c^s$ associated with  $U_q(C(2)^{(2)})$ (actually, this algebra is a new $i$-quantum group of rank one in  super world), which is generated by $B_0, B_1$ and relations
		$$
		\begin{aligned}
			\sum_{i=0}^3(-1)^i\left\{\begin{array}{l}
				3 \\
				i
			\end{array}\right\}_q B_0^{3-i} B_1 B_0^i & =q c\left(q-q^{-1}\right)^2\left(B_0 B_1-B_1 B_0\right), \\
			\sum_{i=0}^3(-1)^i\left\{\begin{array}{l}
				3 \\
				i
			\end{array}\right\}_q B_1^{3-i} B_0 B_1^i & =q c\left(q-q^{-1}\right)^2\left(B_1 B_0-B_0 B_1\right).
		\end{aligned}
		$$
	 And inspired by Damiani’s construction and investigation of root vectors, we can get $PBW$ bases for  $\mathcal{B}_c^s$. Furthermore, we can obtain the Drinfeld second realization of $\mathcal{B}_c^s$ in our subsequent work.
	
	 (2) The positive part $U_q^{+}$ of the quantum affine superalgebra $U_q(C(2)^{(2)})$ can be presented using two generators, $A$ and $B$, which satisfy the cubic $q$-Serre relations. We interpret the alternating Poincar\'e-Birkhoff-Witt basis in the context of a $q$-shuffle superalgebra that we have introduced in association with $U_q(C(2)^{(2)})$. Moreover, we demonstrate the relationship between the alternating $PBW$ basis and the KLT $PBW$ basis for $U_q^{+}$.
Furthermore, we introduce an algebra $\mathcal{U}_q^{+}$, which is referred to as the minimal bosonization of $U_q^{+}$. While its presentation is appealing, the large number of generators and relations renders it cumbersome. Subsequently, we manage to obtain a presentation of $\mathcal{U}_q^{+}$ that involves only a small subset of the original generators and a highly tractable set of relations. We term this a compact presentation \cite{ZH2}.

\subsection{Advantage}
In the context of our research, it is crucial to define these Catalan words for $U_q(C(2)^{(2)})$ recursively, rather than directly as originally proposed in \cite{T2, T3} for $U_q(\widehat{\mathfrak{s l}_2})$. The root cause is that the shuffle relations within our superalgebraic context are considerably more complex than those in $U_q(\widehat{\mathfrak{sl}_2})$. Notably, the recursive definition we adopt provides a unique benefit. When computing higher-order shuffles, we can recursively utilize lower-order shuffles. Leveraging this definition, we can efficiently establish Theorems \ref{Main thm 1} and \ref{Main thm 2} without resorting to Terwilliger’s so-called Catalan profile method.

\section{quantum affine superalgebra $U_q(C(2)^{(2)})$ and $q$-shuffle superalgebra}
\subsection{ Quantum Affine Superalgebra $U_q(C(2)^{(2)})$ } Prior to delving into the details, let us first take a moment to establish certain notations and definitions.
\begin{defi} $($\cite{KLT}$)$
	The quantum affine superalgebra $U_q(C(2)^{(2)})$  is generated by
	the Chevalley elements $k_{\mathrm{d}}^{ \pm 1}:=q^{ \pm h_{\mathrm{d}}}, k_\alpha^{ \pm 1}:=q^{ \pm h_\alpha}, k_{\delta-\alpha}^{ \pm 1}:=q^{ \pm h_{\delta-\alpha}}, e_{ \pm \alpha}, e_{ \pm(\delta-\alpha)}$ with the defining relations
	\begin{gather}
		k_\gamma k_\gamma^{-1}=k_\gamma^{-1} k_\gamma=1, \quad\left[k_\gamma^{ \pm 1}, k_{\gamma^{\prime}}^{ \pm 1}\right]=0,\\
		k_\gamma e_{ \pm \alpha} k_\gamma^{-1}=q^{ \pm(\gamma, \alpha)} e_{ \pm \alpha}, \quad k_\gamma e_{ \pm(\delta-\alpha)} k_\gamma^{-1}=q^{ \pm(\gamma, \delta-\alpha)} e_{ \pm(\delta-\alpha)},\\
		\left[e_\alpha, e_{-\alpha}\right]=\frac{k_\alpha-k_\alpha^{-1}}{q+q^{-1}}, \quad\left[e_{\delta-\alpha}, e_{-\delta+\alpha}\right]=\frac{k_{\delta-\alpha}-k_{\delta-\alpha}^{-1}}{q+q^{-1}},\\
		\left[e_\alpha, e_{-\delta+\alpha}\right]=0, \quad\left[e_{-\alpha}, e_{\delta-\alpha}\right]=0,\\
		\label{serre 1}
		\left[e_{ \pm \alpha},\left[e_{ \pm \alpha},\left[e_{ \pm \alpha}, e_{ \pm(\delta-\alpha)}\right]_q\right]_q\right]_q=0,\\
		\label{serre 2}
		\left[\left[\left[e_{ \pm \alpha}, e_{ \pm(\delta-\alpha)}\right]_q, e_{ \pm(\delta-\alpha)}\right]_q, e_{ \pm(\delta-\alpha)}\right]_q=0,
	\end{gather}
	where $\gamma \in \{\mathrm{d}, \alpha, \delta-\alpha\}$ (with $\mathrm{d}$ the degree-derivation element of affine Lie superalgebra $C(2)^{(2)}$, $\alpha$ the finite simple root of the underlying superalgebra $C(2)$, and $\delta$ the null root of $C(2)^{(2)}$),
	$(\mathrm{d}, \alpha)=0$, $(\mathrm{d}, \delta)=1$,
	The brackets $[\cdot, \cdot]$, $[\cdot, \cdot]_q$ are the super-commutators and $q$-super-commutators
	\begin{gather*}
		\left[e_\beta, e_{\beta^{\prime}}\right]=e_\beta e_{\beta^{\prime}}-(-1)^{\vartheta(\beta) \vartheta\left(\beta^{\prime}\right)} e_{\beta^{\prime}} e_\beta,\\
		\left[e_\beta, e_{\beta^{\prime}}\right]_q=e_\beta e_{\beta^{\prime}}-(-1)^{\vartheta(\beta) \vartheta\left(\beta^{\prime}\right)} q^{\left(\beta, \beta^{\prime}\right)} e_{\beta^{\prime}} e_\beta,
	\end{gather*}
	where the symbol $\vartheta(\cdot)$ denotes the parity function on the root lattice of $C(2)^{(2)}$: $\vartheta(\beta)=0$ for any even root $\beta$ (element of the even part of the superalgebra), and $\vartheta(\beta)=1$ for any odd root $\beta$ (element of the odd part of the superalgebra).
\end{defi}
\begin{remark}
 The left sides of the relations $($\ref{serre 1}$)$ and $($\ref{serre 2}$)$  are invariant with respect to the
	replacement of $q$ by $q^{-1}$. In fact, when we eliminate the $q$-brackets, it becomes evident that the left-hand sides of (\ref{serre 1}) and (\ref{serre 2}) consist of symmetric functions of  $q$ and $q^{-1}$. This characteristic enables us to rewrite the  $q$-commutators in relations (\ref{serre 1}) and (\ref{serre 2}) in the reverse order,
	$$\left[\left[\left[e_{ \pm(\delta-\alpha)}, e_{ \pm \alpha}\right]_q, e_{ \pm \alpha}\right]_q, e_{ \pm \alpha}\right]_q=0,$$
	$$
	\left[e_{ \pm(\delta-\alpha)},\left[e_{ \pm(\delta-\alpha)},\left[e_{ \pm(\delta-\alpha)}, e_{ \pm \alpha}\right]_q\right]_q\right]_q=0.
	$$
	\end{remark}
Denote by  $U^{+}_q(C(2)^{(2)})$ the positive part of $U_q(C(2)^{(2)})$ and define the normal ordering. As we know, the reduced positive system has only two normal orderings as follows
$$
\begin{aligned}
	& \alpha, \delta+\alpha, 2 \delta+\alpha, \ldots, \infty \delta+\alpha, \delta, 2 \delta, 3 \delta, \ldots, \infty \delta, \infty \delta-\alpha, \ldots, 3 \delta-\alpha, 2 \delta-\alpha, \delta-\alpha, \\
	& \delta-\alpha, 2 \delta-\alpha, 3 \delta-\alpha, \ldots, \infty \delta-\alpha, \delta, 2 \delta, 3 \delta, \ldots, \infty \delta, \infty \delta+\alpha, \ldots, 2 \delta+\alpha, \delta+\alpha, \alpha,
\end{aligned}
$$
where the first normal ordering corresponds to a “clockwise” arrangement for the positive roots. Specifically, this arrangement starts from the root $\alpha$  and progresses to the root $\delta-\alpha$. Conversely, the inverse normal ordering corresponds to an “anticlockwise” arrangement of the positive roots. In this case, the movement is from $\delta-\alpha$ to $\alpha$.

For $U_q(C(2)^{(2)})$, we require the following   simple (anti)automorphisms.
\begin{proposition}$($\cite{KLT}$)$
	There exists a graded antilinear antiinvolution  `` $\ddagger$ ":
$$
\begin{aligned}
	\left(q^{ \pm 1}\right)^{\ddagger} & =q^{\mp 1}, & \left(k_\gamma^{ \pm 1}\right)^{\ddagger} & =k_\gamma^{\mp 1}, \\
	E_\beta^{\ddagger} & =(-1)^{\vartheta(\beta)} E_{-\beta}, & E_{-\beta}^{\ddagger} & =E_\beta.
\end{aligned}
$$
	where $(x y)^{\ddagger}=(-1)^{\operatorname{deg} x \operatorname{deg} y} y^{\ddagger} x^{\ddagger}$ for any homogeneous elements $x, y \in U_q(g)$.
	\end{proposition}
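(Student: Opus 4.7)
The plan is to define $\ddagger$ on the free unital superalgebra generated by the Chevalley symbols by the prescribed formulas, extend it $\mathbb{Q}$-antilinearly with $q^{\pm 1} \mapsto q^{\mp 1}$, and extend it super-antimultiplicatively via $(xy)^{\ddagger} = (-1)^{\deg x \deg y} y^\ddagger x^\ddagger$; then show it descends to $U_q(C(2)^{(2)})$ by checking that each defining relation is sent to another defining relation (possibly after rewriting). Since $\ddagger^2$ will act on the Chevalley generators by the parity sign $(-1)^{\vartheta(\beta)}$ and trivially on $q$ and $k_\gamma^{\pm 1}$, the graded involutivity will then propagate to all of $U_q(C(2)^{(2)})$ by super-antimultiplicativity.

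The verification splits into manageable pieces. The pure Cartan relations are immediate, since every $k_\gamma$ is even so $\ddagger$ merely reverses their order and inverts exponents. For the adjoint formulas $k_\gamma e_{\pm\beta} k_\gamma^{-1} = q^{\pm(\gamma,\beta)} e_{\pm\beta}$ with $\beta \in \{\alpha, \delta-\alpha\}$, applying $\ddagger$ yields, up to the common factor $(-1)^{\vartheta(\beta)}$, the same relation with $\beta$ replaced by $-\beta$ and $q$ by $q^{-1}$, which is again an instance of the same family. The diagonal commutators $[e_\alpha, e_{-\alpha}] = (k_\alpha - k_\alpha^{-1})/(q+q^{-1})$ and its $\delta-\alpha$ analog are handled by expanding the super-commutator and noting that the sign $(-1)^{\vartheta(\alpha)}$ coming from $e_\alpha^\ddagger$ cancels the super-sign in $[\cdot,\cdot]$, while the denominator $q+q^{-1}$ is self-conjugate. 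The cross-vanishings $[e_\alpha, e_{-\delta+\alpha}] = 0 = [e_{-\alpha}, e_{\delta-\alpha}]$ are symmetric under $\ddagger$ in the same way.

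The main obstacle lies in the cubic $q$-Serre relations (\ref{serre 1}) and (\ref{serre 2}). The strategy is to exploit the remark immediately following the definition: the left-hand sides are symmetric under $q \leftrightarrow q^{-1}$ and may be rewritten with the nesting order reversed. Now $\ddagger$ simultaneously (i) reverses the order in each nested $q$-super-commutator (producing super-signs $(-1)^{\vartheta \vartheta'}$ that precisely cancel the super-signs inside $[\cdot,\cdot]_q$), and (ii) sends $q \mapsto q^{-1}$. Hence the $\ddagger$-image of the triple bracket $[e_{\pm\alpha}, [e_{\pm\alpha}, [e_{\pm\alpha}, e_{\pm(\delta-\alpha)}]_q]_q]_q$ is, up to an overall sign $(-1)^{3\vartheta(\alpha)+\vartheta(\delta-\alpha)}$ and the substitution $q \mapsto q^{-1}$, equal to $[[[e_{\mp(\delta-\alpha)}, e_{\mp\alpha}]_q, e_{\mp\alpha}]_q, e_{\mp\alpha}]_q$. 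The remark identifies the latter with the $\mp$-version of (\ref{serre 1}), so the relation is preserved. Relation (\ref{serre 2}) is handled identically with the roles of $\alpha$ and $\delta-\alpha$ interchanged.

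Once well-definedness is established, the graded involutivity follows from the direct check on generators: $(q^{\pm 1})^{\ddagger\ddagger} = q^{\pm 1}$, $(k_\gamma^{\pm 1})^{\ddagger\ddagger} = k_\gamma^{\pm 1}$, and $(e_\beta)^{\ddagger\ddagger} = (-1)^{\vartheta(\beta)} e_\beta$, which is exactly the graded analogue of an involution. The bulk of the labor is the sign and $q$-power bookkeeping in the Serre step; every other verification reduces in a line or two to re-invoking an already listed relation, in the spirit of the corresponding statement in \cite{KLT}.
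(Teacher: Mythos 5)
The paper itself offers no proof of this proposition --- it is quoted verbatim from \cite{KLT} --- so your argument has to stand on its own. Your overall strategy (define $\ddagger$ on the free superalgebra, check that each defining relation is preserved, then deduce graded involutivity from the action on generators) is the right and indeed the only reasonable one.

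However, there is a concrete gap precisely at the step you dismiss as a one-line cancellation: the relation $[e_\alpha,e_{-\alpha}]=\frac{k_\alpha-k_\alpha^{-1}}{q+q^{-1}}$. In $U_q(C(2)^{(2)})$ both simple roots are odd, so $[e_\alpha,e_{-\alpha}]=e_\alpha e_{-\alpha}+e_{-\alpha}e_\alpha$, and with the formulas exactly as stated one computes
$(e_\alpha e_{-\alpha})^{\ddagger}=(-1)^{1\cdot 1}\,e_{-\alpha}^{\ddagger}e_\alpha^{\ddagger}=(-1)\cdot e_\alpha\cdot\bigl((-1)e_{-\alpha}\bigr)=e_\alpha e_{-\alpha}$,
and likewise $(e_{-\alpha}e_\alpha)^{\ddagger}=e_{-\alpha}e_\alpha$; hence the left-hand side is \emph{fixed} by $\ddagger$. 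On the right-hand side the denominator $q+q^{-1}$ is indeed self-conjugate, but the numerator is not: $k_\alpha-k_\alpha^{-1}\mapsto k_\alpha^{-1}-k_\alpha$, so the right-hand side is sent to its \emph{negative}. Your claim that ``the sign $(-1)^{\vartheta(\alpha)}$ coming from $e_\alpha^{\ddagger}$ cancels the super-sign in $[\cdot,\cdot]$'' therefore does not close this case: as written, applying $\ddagger$ to the relation yields $[e_\alpha,e_{-\alpha}]=-\frac{k_\alpha-k_\alpha^{-1}}{q+q^{-1}}$, which contradicts the relation itself. Either the stated formulas need an additional sign (e.g.\ on $e_{-\beta}^{\ddagger}$ for odd $\beta$, which would make the left-hand side anti-invariant and restore consistency) or the verification must be redone; in any event you cannot assert the cancellation without exhibiting it, since the naive computation fails. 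A smaller point: in the Serre step, unfolding the super-antimultiplicativity actually gives $([a,b]_q)^{\ddagger}=-q^{-(\beta,\beta')}[a^{\ddagger},b^{\ddagger}]_q$, i.e.\ the nesting order is \emph{preserved} up to a scalar (one only obtains the reversed form after also invoking the $q\leftrightarrow q^{-1}$ symmetry), so your appeal to the Remark is legitimate but your description of what $\ddagger$ does to the nested brackets should be corrected; this part is repairable and not a fatal flaw.
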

\begin{proposition} $($\cite{KLT}$)$
There exists a Dynkin involution $\tau$ which is associated with the automorphism of the Dynkin diagrams of the $U_q\left(C(2)^{(2)}\right)$:
$$
\begin{aligned}
	\tau\left(q^{ \pm 1}\right) & =q^{ \pm 1}, & & \tau\left(k_{\mathrm{d}}^{ \pm 1}\right)=k_{\mathrm{d}}^{ \pm 1}, \\
	\tau\left(k_\beta^{ \pm 1}\right) & =k_{\delta-\beta}^{ \pm 1}, & & \tau\left(k_{-\beta}^{ \pm 1}\right)=k_{-\delta+\beta}^{ \pm 1}, \\
	\tau\left(E_\beta\right) & =E_{\delta-\beta}, & & \tau\left(E_{-\beta}\right)=E_{-\delta+\beta} .
\end{aligned}
$$
	\end{proposition}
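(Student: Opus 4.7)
The plan is to show that the formulas on generators extend to a well-defined algebra map by checking that every defining relation of $U_q(C(2)^{(2)})$ is sent by $\tau$ to a consequence of the defining relations; the involutive property $\tau^2 = \mathrm{id}$ is then immediate from the fact that every generator is fixed under the double swap $\beta \mapsto \delta - \beta \mapsto \delta - (\delta - \beta) = \beta$.

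The structural feature that drives the argument is that the defining relations of $U_q(C(2)^{(2)})$ come in symmetric pairs under the exchange $\alpha \leftrightarrow \delta - \alpha$ (and correspondingly $-\alpha \leftrightarrow \alpha - \delta$), which is exactly the nontrivial automorphism of the affine Dynkin diagram. I would verify preservation under $\tau$ of each such pair in turn: (i) the $k$-$k$ commutation (trivial, since $\tau$ restricts to a permutation of the commutative subalgebra generated by the $k_\gamma^{\pm 1}$); (ii) the $k$-$e$ commutation $k_\gamma e_{\pm\beta} k_\gamma^{-1} = q^{\pm(\gamma,\beta)} e_{\pm\beta}$; (iii) the Chevalley brackets $[e_\beta, e_{-\beta}] = (k_\beta - k_\beta^{-1})/(q+q^{-1})$ for $\beta \in \{\alpha, \delta-\alpha\}$ together with the cross-vanishing relations $[e_\alpha, e_{-\delta+\alpha}] = 0$ and $[e_{-\alpha}, e_{\delta-\alpha}] = 0$; and (iv) the two Serre relations (\ref{serre 1}), (\ref{serre 2}), along with their anticlockwise counterparts recorded in the Remark.

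Step (ii) turns on the identity $(\delta-\gamma, \delta-\beta) = (\gamma, \beta)$, a consequence of the nullity $(\delta, \cdot) \equiv 0$ on classical roots, which ensures that the exponent of $q$ in the image matches the exponent in the defining relation for the transformed pair. Step (iii) amounts to checking that $\tau$ exchanges the two Chevalley pairs correctly; the coincidence $\vartheta(\alpha) = \vartheta(\delta-\alpha) = 1$ (both simple affine roots being odd in $C(2)^{(2)}$) guarantees that the super-signs entering $[\cdot,\cdot]$ are $\tau$-invariant. For step (iv), the cleanest route is to invoke the Remark's observation that both sides of (\ref{serre 1}) and (\ref{serre 2}) are symmetric polynomials in $q$ and $q^{-1}$; combined with the $\tau$-invariance of parity and inner product established in (ii)--(iii), this makes the interchange (\ref{serre 1}) $\leftrightarrow$ (\ref{serre 2}) automatic without having to expand the nested $q$-superbrackets.

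The main obstacle I anticipate lies in the delicate case $\gamma = \mathrm{d}$ of (ii): $\tau$ fixes $k_\mathrm{d}$ and $q$, yet $(\mathrm{d}, \alpha) = 0 \ne 1 = (\mathrm{d}, \delta - \alpha)$, so a naive application of $\tau$ to $k_\mathrm{d} e_{\pm\alpha} k_\mathrm{d}^{-1} = e_{\pm\alpha}$ does not land directly inside the set of defining relations. Handling this requires exploiting the $\tau$-invariant identity $k_\delta = k_\alpha k_{\delta - \alpha}$ to rewrite the $d$-commutations in a form that is manifestly symmetric under the Dynkin swap, thereby reducing the $d$-sector to the $\alpha, \delta-\alpha$ sector already treated in (ii).
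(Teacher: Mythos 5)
The paper itself offers no proof here: the proposition is quoted verbatim from \cite{KLT}, so your verification plan is being compared against nothing. That said, your outline is the natural one, and steps (i)--(iii) are sound: $(\delta-\gamma,\delta-\beta)=(\gamma,\beta)$ follows from $(\delta,\delta)=(\delta,\alpha)=0$, the parity is preserved because $\vartheta(\delta-\beta)=\vartheta(\beta)$ (both simple roots of $C(2)^{(2)}$ are odd), and the Serre relations (\ref{serre 1}), (\ref{serre 2}) are exchanged by $\tau$ exactly as the Remark's reversed forms indicate.

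The gap is in your treatment of the $\gamma=\mathrm{d}$ case, which you correctly flag but do not actually resolve. The identity $k_\delta=k_\alpha k_{\delta-\alpha}$ cannot help: $k_{\mathrm{d}}\,e_{\pm\alpha}\,k_{\mathrm{d}}^{-1}=e_{\pm\alpha}$ is itself one of the defining relations, and rewriting other relations does not remove it from the presentation. Its image under the map as stated is $k_{\mathrm{d}}\,e_{\pm(\delta-\alpha)}\,k_{\mathrm{d}}^{-1}=e_{\pm(\delta-\alpha)}$, which together with the defining relation $k_{\mathrm{d}}\,e_{\pm(\delta-\alpha)}\,k_{\mathrm{d}}^{-1}=q^{\pm1}e_{\pm(\delta-\alpha)}$ would force $e_{\delta-\alpha}=0$ in the target; so no algebra endomorphism with $\tau(k_{\mathrm{d}})=k_{\mathrm{d}}$, $\tau(e_{\pm\alpha})=e_{\pm(\delta-\alpha)}$ and $\tau(q)=q$ can exist on the full algebra with these conventions. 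This is the familiar fact that a diagram automorphism of an affine algebra permutes the simple roots and fixes $\delta$ but does \emph{not} fix the scaling element: one must take $\tau(k_{\mathrm{d}})=k_{\mathrm{d}}\,k_{\gamma}^{\pm1}$ for a suitable $\gamma$ so that $(\tau(\mathrm{d}),\tau(\beta))=(\mathrm{d},\beta)$ for $\beta\in\{\alpha,\delta-\alpha\}$, or else restrict $\tau$ to the subalgebra generated by $e_{\pm\beta}$, $k_{\beta}^{\pm1}$ ($\beta=\alpha,\delta-\alpha$) --- which is the only part of $\tau$ this paper ever uses, namely its action on the root vectors $E_{n\delta+\alpha_i}$ and $E_{n\delta}$. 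Your proof should either correct $\tau(k_{\mathrm{d}})$ explicitly or state that the involution is asserted on that subalgebra; as written, the final step does not close.
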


  In accordance with the second normal ordering, and setting $\delta-\alpha$ to be $\alpha_0$ and $\alpha$ to $\alpha_1$,  we can define the
  PBW generators of Damiani type as follows:
\begin{gather}
\left\{E_{n \delta+\alpha_0}\right\}_{n=0}^{\infty}, \quad\left\{E_{n \delta+\alpha_1}\right\}_{n=0}^{\infty}, \quad\left\{E_{n \delta}\right\}_{n=1}^{\infty}.
\label{root vec}
\end{gather}
  These elements are recursively defined as
\begin{gather}
  E_{\alpha_0}=A, \quad E_{\alpha_1}=B, \quad E_\delta=q^{-2} B A+A B,
  \label{simple vec}
\end{gather}
  and for $n \geq 1$,
\begin{gather}
	\label{real root}
E_{n \delta+\alpha_0}=\frac{\left[E_{(n-1) \delta+\alpha_0}, E_\delta\right]}{q-q^{-1}}, \quad E_{n \delta+\alpha_1}=\frac{\left[E_\delta, E_{(n-1) \delta+\alpha_1}\right]}{q-q^{-1}}\\
  \label{im root}
  E_{n \delta}=q^{-2} E_{(n-1) \delta+\alpha_1} A+A E_{(n-1) \delta+\alpha_1},
\end{gather}
  where
\begin{gather*}
  \left[E_\beta, E_{\beta^{\prime}}\right]=E_\beta E_{\beta^{\prime}}-(-1)^{\vartheta(\beta) \vartheta\left(\beta^{\prime}\right)} E_{\beta^{\prime}} E_\beta,\\
  \left[E_\beta, E_{\beta^{\prime}}\right]_q=E_\beta E_{\beta^{\prime}}-(-1)^{\vartheta(\beta) \vartheta\left(\beta^{\prime}\right)} q^{\left(\beta, \beta^{\prime}\right)} E_{\beta^{\prime}} E_\beta.
\end{gather*}
And by \cite{KLT}, the elements $\left\{E_{n \delta}\right\}_{n=1}^{\infty}$ mutually commute. Based on the authors' work in \cite{KLT}, we are able to prove the validity of the following Theorem.
\begin{theorem} \label{PBW Da}
	Define
	$$
			\mathscr{B ^ { + }} =\Big\{\, E_{\beta_1}^{r_1} \cdot \ldots \cdot E_{\beta_N}^{r_N} \,\Big|\, N \in \mathbb{N},\ \beta_1<\ldots<\beta_N \in \mathscr{R}_{+},\
		r_1, \ldots, r_N \in \mathbb{N} \backslash\{0\}\,\Big\}.
	$$
	Then, the set $\mathscr{B}^{+}$ defined above is a $PBW$ basis of $U_q^{+}\left(C(2)^{(2)}\right)$. Moreover, if we define
$$F_\alpha = \ddagger\left(E_\alpha\right), \quad \forall\, \alpha \in \mathscr{R}_{+},$$
then the vectors of the form $F_{\gamma_1}^{s_1} \cdot \ldots \cdot F_{\gamma_M}^{s_M} K_1^n K_2^m E_{\beta_1}^{r_1} \cdot \ldots \cdot E_{\beta_N}^{r_N}$ with $M, N \in \mathbb{N}$, $\gamma_1>\ldots>\gamma_M, \beta_1<\ldots<\beta_N \in \mathscr{R}_{+}, s_1, \ldots, s_M, r_1, \ldots, r_N \in \mathbb{N} \backslash\{0\}, m, n \in \mathbb{Z}$ form a $PBW$ basis of $U_q\left(C(2)^{(2)}\right)$.
	
	\end{theorem}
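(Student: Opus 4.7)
The plan is to bootstrap from the construction of Khoroshkin--Lukierski--Tolstoy in \cite{KLT}, which develops PBW bases for quantum affine superalgebras via normal orderings on the positive root system and the cubic $q$-Serre relations. The proof naturally splits in two stages: first show that $\mathscr{B}^+$ is a basis of $U_q^+(C(2)^{(2)})$ with respect to the second normal ordering, and then promote this to a basis of the full algebra using the graded antiinvolution $\ddagger$ and the triangular decomposition.

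First, I would verify that the recursive root vectors in (\ref{simple vec})--(\ref{im root}) coincide, up to nonzero scalars in $\mathbb{K}$, with the root vectors of KLT associated with the chosen normal ordering. Since KLT's construction is itself inductive, this reduces to a bookkeeping check: induct on $n$ and show that the three families $\{E_{n\delta+\alpha_0}\}$, $\{E_{n\delta+\alpha_1}\}$, $\{E_{n\delta}\}$ defined here satisfy the same recursion (modulo normalization) as theirs. At the same time one records that the mutual commutativity $[E_{n\delta}, E_{m\delta}] = 0$ of imaginary root vectors, asserted in the text on the basis of \cite{KLT}, holds as stated.

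Next, I would establish Levendorskii--Soibelman type commutation rules: for any two positive roots $\beta<\gamma$ in the normal ordering, the super $q$-bracket $[E_\beta, E_\gamma]_q$ expands as a $\mathbb{K}$-linear combination of ordered monomials $E_{\mu_1}^{s_1}\cdots E_{\mu_k}^{s_k}$ with $\beta<\mu_1\le\cdots\le\mu_k<\gamma$ of matching total weight. The ingredients are the cubic $q$-Serre relations (\ref{serre 1})--(\ref{serre 2}), the commutativity $[E_{n\delta}, E_{m\delta}]=0$, and inductively derived brackets between real and imaginary root vectors. A standard straightening induction on inversion count then shows that $\mathscr{B}^+$ spans $U_q^+$. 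Linear independence is obtained by comparing graded characters: the Poincar\'e series of $\mathscr{B}^+$ with respect to the root-lattice grading matches that of $U_q^+$ (both are determined by the real and imaginary root multiplicities of $C(2)^{(2)}$); alternatively, one may appeal to the $q$-shuffle embedding $\varphi$ (to be established later in the paper) to witness independence directly.

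Finally, for the full algebra, applying $\ddagger$ transfers the PBW basis of $U_q^+$ to one of $U_q^-$, producing the ordered monomials in the $F_\gamma$. Combined with the monomial basis $\{K_1^n K_2^m\}$ of $U_q^0$ and the triangular decomposition $U_q(C(2)^{(2)}) = U_q^- \otimes U_q^0 \otimes U_q^+$ from \cite{KLT}, this yields the claimed PBW basis. The main obstacle is the careful handling of parity signs throughout: since both $\alpha$ and $\delta-\alpha$ are odd, every super-commutator carries a parity-dependent sign, and these signs must be propagated consistently through the inductive proofs of the Levendorskii--Soibelman relations and the straightening argument; a secondary technical nuisance is matching KLT's normalization conventions to those implicit in (\ref{simple vec})--(\ref{im root}).
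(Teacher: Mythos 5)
The paper does not actually prove this theorem: it is stated with the remark ``Based on the author's work in \cite{KLT}, we are able to prove the validity of the following Theorem'' and no proof environment follows, so the result is simply imported from Khoroshkin--Lukierski--Tolstoy. Your outline is precisely the route that \cite{KLT} (and, in the non-super case, Damiani) takes: identify the recursively defined root vectors with the Cartan--Weyl generators attached to the chosen normal ordering, prove convexity/straightening relations of Levendorski\u{\i}--So\u{\i}belman type to get spanning, and then assemble the full basis from the triangular decomposition together with the antiinvolution $\ddagger$. In that sense your proposal is the ``same approach as the paper,'' to the extent the paper has one.

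The one step where your sketch asserts rather than argues is linear independence. The character comparison as you phrase it is circular: the graded dimension of $U_q^{+}$ as an abstract algebra on two generators modulo the cubic $q$-Serre relations is not known a priori to equal the count of ordered monomials --- that equality is exactly the hard half of the PBW theorem. The spanning argument only gives the inequality $\dim U_q^{+}[\beta]\le \#\{\text{ordered monomials of weight }\beta\}$, and you need an independent lower bound, obtained in practice either by specialization at $q\to 1$ to $U(\mathfrak n^{+})$, by nondegeneracy of the Drinfeld-type pairing between $U_q^{+}$ and $U_q^{-}$ (which is how \cite{KLT} proceeds), or by your alternative of the Rosso-type shuffle embedding --- though for the last one injectivity of $\varphi$ alone does not suffice; you must also show the images of the ordered monomials are independent, e.g.\ by a leading-word argument in $\mathbb V$. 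A second point specific to the super setting that deserves a sentence in any complete write-up: the real root vectors $E_{n\delta+\alpha_i}$ are odd, yet the theorem allows arbitrary exponents $r_i$, which is only consistent because their squares are nonzero and account for the roots $2\beta$ (cf.\ the appearance of $E_{(r+j)\delta+\alpha_0}^{2}$ in Lemma \ref{R R}); this must be reconciled with the ``reduced'' positive system used in the normal ordering.
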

\subsection{$q$-shuffle superalgebra}
We fix two noncommuting indeterminates $x$ and $y$, and let $\mathbb{V}$ denote the free associative algebra over these generators. For each $n \in \mathbb{N}$, a word of length $n$ in $\mathbb{V}$ is a product $v_1 v_2 \cdots v_n$ with $v_i \in \{x, y\}$ for all indices $1 \leq i \leq n$. By convention, the empty product (corresponding to length zero) is taken as the multiplicative identity of $\mathbb{V}$; this element is called the trivial word and written as $1$. The vector space underlying $\mathbb{V}$ admits a basis composed of all words in $x$ and $y$; this basis is termed the standard basis.
\begin{defi}\label{orth}
	For vector space $\mathbb{V}$, there exists a unique bilinear form $( $,$ ): \mathbb{V} \times \mathbb{V} \rightarrow \mathbb{F}$ with respect to which the standard basis is orthonormal. The bilinear form $( $,$ )$ is symmetric and nondegenerate. For $v \in \mathbb{V}$, we have
	$$
	v=\sum_w w(w, v),
	$$
	where the summation is over all the words $w$ in $\mathbb{V}$.
\end{defi}
The $q$-shuffle algebra was introduced by Rosso \cite{R1, R2} and described further by Green \cite{G}. Within the framework of the superalgebra $U_q(C(2)^{(2)})$, we need to define the $q$-shuffle superproduct for nontrivial words as follows.

\begin{defi}
On the free superalgebra $\mathbb{V}$, there exists a $q$-shuffle superproduct, denoted by $\star$. It can be defined as follows, endowing  with the structure of a $q$-shuffle superalgebra.

For $v \in \mathbb{V}$, $1 \star v=v \star 1=v$.  For $x, y \in \mathbb{V}$ with $\text{deg}(x)=\text{deg}(y)=1$,
\begin{gather*}
	x \star y=x y-q^{-2} y x, \quad y \star x=y x-q^{-2} x y,\\
	x \star x=\left(1-q^2\right) x x, \quad y \star y=\left(1-q^2\right) y y.
\end{gather*}

 For $u \in\{x, y\}$ and a nontrivial word $v=v_1 v_2 \cdots v_n$ in  $\mathbb{V}$, define the $q$-shuffle superprodcut
 \begin{gather}
	u \star v=\sum_{i=0}^n (-1)^{i}v_1 \cdots v_i u v_{i+1} \cdots v_n q^{\left\langle v_1, u\right\rangle+\left\langle v_2, u\right\rangle+\cdots+\left\langle v_i, u\right\rangle},
	\label{shuffle long 1}
\end{gather}
\begin{gather}
	v \star u=\sum_{i=0}^n (-1)^{n-i} v_1 \cdots v_i u v_{i+1} \cdots v_n q^{\left\langle v_n, u\right\rangle+\left\langle v_{n-1}, u\right\rangle+\cdots+\left\langle v_{i+1}, u\right\rangle},
	\label{shuffle long 2}
\end{gather}
where
$$
\begin{array}{c|cc}
	\langle,\rangle & x & y \\
	\hline x & 2 & -2 \\
	y & -2 & 2
\end{array}
$$	

For nontrivial words $u=u_1 u_2 \cdots u_r$ and $v=v_1 v_2 \cdots v_s$ in $\mathbb{V}$, we define
\begin{gather}
	\label{shuffle long 3}
	u \star v=u_1\left(\left(u_2 \cdots u_r\right) \star v\right)+(-1)^{r}v_1\left(u \star\left(v_2 \cdots v_s\right)\right) q^{\left\langle u_1, v_1\right\rangle+\left\langle u_2, v_1\right\rangle+\cdots+\left\langle u_r, v_1\right\rangle},\\
	\label{shuffle long 4}
	u \star v=\left(u \star\left(v_1 \cdots v_{s-1}\right)\right) v_s+(-1)^{s}\left(\left(u_1 \cdots u_{r-1}\right) \star v\right) u_r q^{\left\langle u_r, v_1\right\rangle+\left\langle u_r, v_2\right\rangle+\cdots+\left\langle u_r, v_s\right\rangle}.
\end{gather}
	\end{defi}
\begin{remark}
When compared with the $q$-shuffle algebra associated with $U_q(\widehat{\mathfrak{s l}_2})$, the $q$-shuffle superproduct associated with $U_q(C(2)^{(2)})$ has an extra sign deformation, which is related to the parity of the total root system $\Delta$ of $U_q(C(2)^{(2)})$.
\end{remark}
\begin{example}

$$
\begin{gathered}
	x \star(y y y)=x y y y-q^{-2} y x y y+q^{-4} y y x y-q^{-6} y y y x, \\
	(x y x) \star y=x y x y+\left(1-q^{-2}\right) x y y x-q^{-2} y x y x , \\
	(x x) \star(y y y)=x x y y y-q^{-2} x y x y y+q^{-4} x y y x y-q^{-6} x y y y x+q^{-4} y x x y y \\
	-q^{-6} y x y x y+q^{-8} y x y y x+q^{-8} y y x x y-q^{-10} y y x y x+q^{-12} y y y x x, \\
	(x y) \star(x x y y)=x y x x y y+x x y y x y-\{2\}_q^2 x x y x y y+\{3\}_q^2 x x x y y y,
\end{gathered}
$$
where $\{n\}_q=\frac{q^{-n}-(-1)^n q^n}{q+q^{-1}}$.	
	\end{example}

\subsection{Connection bewteen $U_q^{+}(C(2)^{(2)})$ and $q$-shuffle superalgebra \uppercase\expandafter{\romannumeral1} }
In this subsection, we will get the correspondence  between Khoroshkin-Lukierski-Tolstoy (KLT)'s $PBW$ basis of Damiani type and Catalan words.
\begin{proposition}\label{shuffle serre}
	The following equations hold in $\mathbb{V}$,
\begin{align}
		\label{serre shuffle 1}
	x \star x \star x \star y+\{3\}_q x \star x \star y \star x-\{3\}_q x \star y \star x \star x-y \star x \star x \star x=0,\\
	\label{serre shuffle 2}
	y \star y \star y \star x+\{3\}_q y \star y \star x \star y-\{3\}_q y \star x \star y \star y-x \star y \star y \star y=0 .
\end{align}
\end{proposition}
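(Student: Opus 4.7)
The plan is to verify both shuffle Serre relations by direct expansion inside $\mathbb{V}$. Both sides of (\ref{serre shuffle 1}) are homogeneous of total degree four with three $x$'s and one $y$, so they live in the four-dimensional subspace of $\mathbb{V}$ spanned by $\{xxxy,\ xxyx,\ xyxx,\ yxxx\}$; the claim therefore reduces to four scalar equalities in $\mathbb{Q}(q)$, one per basis word. The corresponding reduction for (\ref{serre shuffle 2}) is symmetric under $x\leftrightarrow y$.

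First I would build up the iterated shuffles of $x$'s: from the single-letter rule $x\star x=(1-q^2)xx$ and one application of (\ref{shuffle long 1}) one obtains $x\star(xx)=(1-q^2+q^4)xxx$, hence $x\star x\star x=(1-q^2)(1-q^2+q^4)\,xxx$. A short check shows $1-q^2+q^4 = q^{2}\{3\}_q$, using $\{3\}_q = (q^{-3}+q^3)/(q+q^{-1}) = q^{-2}-1+q^2$; this already isolates the coefficient $\{3\}_q$ that must reappear in (\ref{serre shuffle 1}). Each of the four terms on the left-hand side of (\ref{serre shuffle 1}) can then be computed by one further single-letter shuffle---for example $x\star x\star x\star y = (x\star x\star x)\star y$ via (\ref{shuffle long 2}), $x\star x\star y\star x = ((x\star x)\star y)\star x$ via two applications of (\ref{shuffle long 2}), and similarly for the remaining two---and expanded into the four basis monomials. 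Collecting the coefficient of each basis word and forming the linear combination with scalars $1,\ \{3\}_q,\ -\{3\}_q,\ -1$ produces a rational function of $q$ that must vanish; the cancellation is driven by the pre-factor identity $(1-q^2)(1-q^2+q^4) = q^2(1-q^2)\{3\}_q$ together with elementary manipulations in $\mathbb{Z}[q^{\pm 1}]$.

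The companion relation (\ref{serre shuffle 2}) follows from (\ref{serre shuffle 1}) by the $\mathbb{K}$-linear involution of $\mathbb{V}$ that interchanges $x$ and $y$: this swap preserves both the parity grading ($\vartheta(x)=\vartheta(y)=1$) and the bilinear form $\langle\cdot,\cdot\rangle$ (whose diagonal entries are both $2$ and whose off-diagonals are both $-2$), hence is an automorphism of the $q$-shuffle superalgebra that converts (\ref{serre shuffle 1}) into (\ref{serre shuffle 2}) term by term. The principal obstacle throughout is bookkeeping rather than conceptual: the supersign factors $(-1)^i$ in (\ref{shuffle long 1}) and $(-1)^{n-i}$ in (\ref{shuffle long 2}) must be combined correctly with the $q$-weights drawn from $\langle\cdot,\cdot\rangle$ at every step, and different bracketings of the four-fold shuffle have to be reconciled via the associativity of $\star$. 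This is exactly the extra complexity flagged in Section~1.4: compared with the ordinary $U_q(\widehat{\mathfrak{sl}_2})$ shuffle relations of Terwilliger, the sign deformation intrinsic to the super setting makes the verification noticeably more delicate, even though no new ideas are needed.
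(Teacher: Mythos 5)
Your proposal is correct and follows essentially the same route as the paper: a direct expansion of each of the four shuffle monomials into the basis $\{xxxy,\,xxyx,\,xyxx,\,yxxx\}$ using (\ref{shuffle long 1})--(\ref{shuffle long 4}), followed by cancellation of coefficients, which is exactly the computation recorded in the paper's proof. Your additional observation that (\ref{serre shuffle 2}) follows from (\ref{serre shuffle 1}) via the $x\leftrightarrow y$ automorphism is a clean way of making precise what the paper dismisses as ``similar.''
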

	\begin{proof}
		We only prove (\ref{serre shuffle 1}). (\ref{serre shuffle 2}) is similar. Thanks to (\ref{shuffle long 1}), (\ref{shuffle long 2}), (\ref{shuffle long 3}), (\ref{shuffle long 4}), we can get
\begin{gather*}
		x \star x \star x \star y=\left(1-q^2\right)\left(1-q^2+q^4\right)\left(x^3 y-q^{-2} x^2 y x+q^{-4} x y x^2-q^{-6} y x^3\right),\\
		x \star x \star y \star x=(1{-}q^2)\left[(1{-}q^{-2}{-}q^2) x^3 y{+}(2{-}q^{-2}) x^2 y x{+}(1{-}2 q^{-2}) x y x^2{+}q^{-4}(1{-}q^2{+}q^4) y x^3\right],\\
		x \star y \star x \star x=(1{-}q^{-2})\left[(1{-}q^{-2}{-}q^2) x^3 y{+}(2{-}q^2) x^2 y x{+}(1{-}2 q^2) x y x^2{+}(1{-}q^2{+}q^4) y x^3\right],\\
		y \star x \star x \star x=(q^{-4}{-}q^{-2})(1{-}q^{-2}{-}q^2) x^3 y{+}(1{-}q^{-2})(1{-}q^{-2}{-}q^2) x^2 y x\\
		\qquad\qquad +\,(1{-}q^2)(1{-}q^{-2}{-}q^2) x y x^2+(1{-}q^2)(1{-}q^2{+}q^4) y x^3.
	\end{gather*}
		After a simple calculation, we get (\ref{serre shuffle 1}).
\end{proof}
\begin{coro}
	There exists an injective superalgebra homomorphism $\varphi $ from $U_q^{+}(C(2)^{(2)})$ to  the $q$-shuffle super-algebra $\mathbb{V}$, that sends $A \mapsto x$ and $B \mapsto y$.
\end{coro}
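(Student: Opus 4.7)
The corollary is a direct consequence of the universal property of a presentation, once one checks that the images $x,y\in(\mathbb{V},\star)$ satisfy the cubic $q$-Serre relations which (together with the degree-one freeness on $A,B$) define the positive part $U_q^{+}(C(2)^{(2)})$. So my plan is to reduce the statement to Proposition \ref{shuffle serre} in two bookkeeping steps.

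First, I would extract the presentation of $U_q^{+}(C(2)^{(2)})$ from the full presentation recalled in the excerpt: keep the generators $A=e_{\alpha}$, $B=e_{\delta-\alpha}$, with parities $\vartheta(\alpha)=0$ and $\vartheta(\delta-\alpha)=1$, and keep only the upper-sign Serre relations (\ref{serre 1}) and (\ref{serre 2}). I would then expand the iterated $q$-supercommutators $[\,\cdot\,,\,\cdot\,]_{q}$ using $[u,v]_{q}=uv-(-1)^{\vartheta(u)\vartheta(v)}q^{(u,v)}vu$ and the Cartan data for $C(2)^{(2)}$ (in particular $(\alpha,\alpha)=2$, $(\alpha,\delta-\alpha)=-2$). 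Collecting monomials should reduce each relation to the cubic form
\[
A^{3}B+\{3\}_{q}\,A^{2}BA-\{3\}_{q}\,ABA^{2}-BA^{3}=0,
\]
and symmetrically with $A$ and $B$ interchanged. The characteristic ``odd'' $q$-integer $\{3\}_{q}=(q^{-3}+q^{3})/(q+q^{-1})$ (rather than the usual $[3]_{q}$) should pop out precisely because the middle bracket involves the odd generator, which reverses the sign of the super-commutator.

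Second, these two cubic identities are literally the defining relations for the free superalgebra quotient that presents $U_q^{+}(C(2)^{(2)})$. By Proposition \ref{shuffle serre}, the very same identities hold in $(\mathbb{V},\star)$ after the substitution $A\mapsto x$, $B\mapsto y$. Hence, by the universal property of this free-quotient presentation, the assignment extends uniquely to a superalgebra homomorphism $\varphi\colon U_q^{+}(C(2)^{(2)})\to(\mathbb{V},\star)$ with $\varphi(A)=x$, $\varphi(B)=y$.

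The only real obstacle is the first step: one must consistently keep track of the $(-1)^{\vartheta\vartheta'}$ signs coming from the super-structure, the $q$-powers from the super-$q$-commutator, and the nested bracket expansion, in order to land exactly on the form of Proposition \ref{shuffle serre} with the coefficient $\{3\}_{q}$. This is mechanical but error-prone; once done, the corollary follows without further work.
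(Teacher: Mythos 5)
Your overall strategy is exactly the paper's: the paper's entire proof is the one line ``It follows from Proposition \ref{shuffle serre}'', i.e.\ check the defining relations in $(\mathbb V,\star)$ and invoke the universal property of the presentation. So the logical skeleton of your argument is fine. There is, however, a concrete error in your first step that would prevent the bookkeeping from landing on Proposition \ref{shuffle serre}: in $U_q(C(2)^{(2)})$ \emph{both} simple roots $\alpha$ and $\delta-\alpha$ are odd, not just $\delta-\alpha$. This is visible in the paper from $[e_\alpha,e_{-\alpha}]=(k_\alpha-k_\alpha^{-1})/(q+q^{-1})$ (an anticommutator normalization), from $\deg(x)=\deg(y)=1$ in $\mathbb V$, and from $E_\delta=q^{-2}BA+AB$, which is $[A,B]_q$ with the \emph{plus} sign forced by $(-1)^{\vartheta(\alpha)\vartheta(\delta-\alpha)}=-1$. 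With both roots odd, the nested bracket $[A,[A,[A,B]_q]_q]_q$ expands to
\[
A^3B+(q^2-1+q^{-2})A^2BA-(q^2-1+q^{-2})ABA^2-BA^3
=A^3B+\{3\}_qA^2BA-\{3\}_qABA^2-BA^3,
\]
which is precisely the $\star$-identity of Proposition \ref{shuffle serre}. With your assignment $\vartheta(\alpha)=0$ the innermost bracket becomes $AB-q^{-2}BA$ and the expansion yields the \emph{ordinary} Serre relation $A^3B-[3]_qA^2BA+[3]_qABA^2-BA^3=0$, which does not match the shuffle identity, so the universal-property argument would not close. Once the parities are corrected, your proof coincides with the paper's.
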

\begin{proof}
	The superalgebra homomorphism follows directly from Proposition \ref{shuffle serre}. Therefore, it suffices to prove that $\varphi $ is injective.
	
	A bilinear form $(\cdot,\cdot): U_q^+ \times U_q^+ \to \mathbb{Q}(q)$ is defined on $U_q^+$, satisfying the following properties:
\begin{itemize}
	\item Generator pairing:
For the generators $A$ and $B$ of $U_q^+$, we have:
\[
(A, A) = 1, \quad (B, B) = 1, \quad (A, B) = 0.
\]
	
	\item Multiplicative compatibility:
	For any $a, b, c \in U_q^+$, we have:
	\[
	(ab, c) = (a \otimes b, \Delta(c)),
	\]
where the tensor product pairing is defined by $(a \otimes b, c \otimes d) = (a, c)(b, d)$.
	
	\item Supersymmetry:
	For any homogeneous elements $a, b \in U_q^+$,
	\[
	(a, b) = (-1)^{\vartheta(a)\vartheta(b)} (b, a).
	\]
\end{itemize}
		By Theorem \ref{PBW Da}, the bilinear form $(\cdot, \cdot)$ is easily shown to be non-degenerate.
	
We define an algebra homomorphism $\xi: \mathbb{V} \to U_q^+$ by its action on the generators of $\mathbb{V}$:
\[
\xi(x) = A , \quad \xi(y) = B .
\]
As $U_q^+$ is generated by $A$ and $B$, $\xi$ is surjective.

By Definition \ref{orth}, for any $u \in U_q^+$, we can extend  $\varphi: U_q^+ \to \mathbb{V}$ by
\[
\varphi(u) = \sum_{w \in \mathbb{V}} (u, \xi(w)) w.
\]
We directly obtain the duality property:
\[
(\varphi(u), v) = (u, \xi(v)) \quad \forall\; u \in U_q^+, v \in \mathbb{V}.
\]

Suppose there exists $a \in U_q^+$ such that $\varphi(a) = 0$. Then for any $v \in \mathbb{V}$, we have:
\[
(a, \xi(v)) = (\varphi(a), v) = (0, v) = 0.
\]
Since $\xi$ is surjective, for any $b \in U_q^+$, there exists $v \in \mathbb{V}$ such that $\xi(v) = b$. Therefore:
\[
(a, b) = 0 \quad \forall\; b \in U_q^+.
\]
Furthermore, since the bilinear form $(\cdot, \cdot)$ is non-degenerate, it follows that $a = 0$, i.e., $\ker \varphi = \{0\}$.

This completes the proof.	
\end{proof}
 We still refer to it as the Rosso embedding for the superalgebra case.

Before stating our main Theorem, we need to introduce an important combinatorial tool --- Catalan words, which shall help us to establish the connection between $U_q^{+}(C(2)^{(2)})$ and the $q$-shuffle superalgebra.
  \begin{defi}
  	Define $\bar{x}=1$ and $\bar{y}=-1$. Pick an integer $n \geq 0$  and consider a word $w=a_1 a_2 \cdots a_n$ in  $\mathbb{V}$. The word $w$ is called balanced whenever $\bar{a}_1+\bar{a}_2+\cdots+\bar{a}_n=0 ;$ in this case $n$ is even. The word $w$ is said to be $Catalan$, whenever it is balanced and $\bar{a}_1+\bar{a}_2+\cdots+\bar{a}_i \geq 0$ for $1 \leq i \leq n$.
  \end{defi}
\begin{example}
	For $0 \leq n \leq 3$,  we give the $Catalan$ words of length $2 n$.$$
	\begin{array}{c|c}
		n & \text { Catalan words of length } 2 n \\
		\hline 0 & 1 \\
		1 & x y \\
		2 & x y x y, \quad x x y y \\
		3 &  xyxyxy, xxyyxy, xyxxyy, xxyxyy, xxxyyy
	\end{array}
	$$
\end{example}
\begin{defi}
	For $n \in \mathbb{N}$, let $\mathrm{Cat}_n$ denote the set of $Catalan$ words in $\mathbb{V}$ that have length $2 n$.
\end{defi}

\begin{defi}
	For $n \geq 0$, define
\begin{gather}
C^{s}_{n}=\sum_{w \in \mathrm{Cat}_n} w C^{s}(w) \quad n \in \mathbb{N},
\label{Cn}
\end{gather}
where
\begin{gather}
C^{s}(w)=\sum_{v \in \mathrm{Cat}_{n-1}} C^{s}(v) \sum_i(-1)^i\left\{2+2 \bar{a}_1+2 \bar{a}_2+\cdots+2 \bar{a}_i\right\}_q ,
\label{Cw}
\end{gather}
and the summation is taken over all integers $i$ with $0 \leq i \leq m$, where $w=a_1 a_2 \cdots a_i x a_{i+1} \cdots a_m y$. We refer to $C^{s}_{n}$ as the $n^{\text {th }}$ $Catalan$ element (for the supercase) in $\mathbb{V}$.
	\end{defi}
\begin{remark}
	Compare it with the definition presented in Terwilliger's paper \cite{T2}. For a Catalan word $w=a_1 a_2 \cdots a_{2 n}$, we have
\begin{gather*}
C(w)=[1]_q\left[1+\bar{a}_1\right]_q\left[1+\bar{a}_1+\bar{a}_2\right]_q \cdots\left[1+\bar{a}_1+\bar{a}_2+\cdots+\bar{a}_{2 n}\right]_q, \quad (Terwilliger's)
\end{gather*}
\begin{gather*}
C^{s}(w)=\sum_{v \in \operatorname{Cat}_{n-1}} C^{s}(v) \sum_i(-1)^i\left\{2+2 \bar{a}_1+2 \bar{a}_2+\cdots+2 \bar{a}_i\right\}_q. \quad (Ours)
\end{gather*}

It should be pointed out that in the case of $U_q(\widehat{\mathfrak{s l}_2})$, there is no sign deformation in the definition (\cite{T2}), because all the root parity in $U_q(\widehat{\mathfrak{s l}_2})$ is even. But in our case, there is a more complex sign deformation, it is nearly impossible to present the Catalan numbers directly, as the coefficients of the $n^{\text {th }}$ Catalan number depend on the $(n-1)^{\text {th }}$ Catalan number. Through concrete calculations, we have discovered that it is essential to define these Catalan words recursively. This approach enables us to establish a connection between   $U_q^{+}(C(2)^{(2)})$ and the $q$-shuffle superalgebra.
\end{remark}
\begin{example} Some of the Catalan elements of lower degrees  are exhibited as follows:
\begin{gather*}
	C^{s}_0=1, \quad C^{s}_1=\{2\}_q x y, \quad C^{s}_2=\{2\}_q^2 x y x y-\{3\}_q\{2\}_q^2 x x y y,
\\
C^{s}_3=\{2\}_q^3 x y x y x y-\{3\}_q\{2\}_q^3 x x y y x y-\{3\}_q\{2\}_q^3 x y x x y y+\{3\}_q^2\{2\}_q^3 x x y x y y\\
\hskip1cm -\{4\}_q\{3\}_q^2\{2\}_q^2 x x x y y y ,
	\\
C^{s}_4=\{ 2\} _q^4xyxyxyxy - \{ 2\} _q^4{\{ 3\} _q}xyxyxxyy + \{ 2\} _q^4\{ 3\} _q^2xyxxyxyy - \{ 2\} _q^4\{ 3\} _q^3xxyxyxyy \\
- \{ 2\} _q^4{\{ 3\} _q}xxyyxyxy
+ \{ 2\} _q^4\{ 3\} _q^2xxyyxxyy + \{ 2\} _q^3\{ 3\} _q^3{\{ 4\} _q}xxxyyxyy\\
 + \{ 2\} _q^3\{ 3\} _q^3{\{ 4\} _q}xxyxxyyy - \{ 2\} _q^4{\{ 3\} _q}xyxxyyxy
	  - \{ 2\} _q^3\{ 3\} _q^2{\{ 4\} _q}xyxxxyyy + \{ 2\} _q^4\{ 3\} _q^2xxyxyyxy \\- \{ 2\} _q^2\{ 3\} _q^3\{ 4\} _q^2xxxyxyyy
- \{ 2\} _q^3\{ 3\} _q^2{\{ 4\} _q}xxxyyyxy + \{ 2\} _q^2\{ 3\} _q^2\{ 4\} _q^2\{ 5\} xxxxyyyy.
\end{gather*}
\end{example}

\begin{theorem}\label{Main thm 1}
	The map $\varphi $ sends the KLT PBW generators of Damiani type to the Catalan elements as follows:
\begin{gather}
E_{n \delta+\alpha_0} \mapsto q^{-2 n}\left(q+q^{-1}\right)^{2 n} x C^{s}_n, \quad E_{n \delta+\alpha_1} \mapsto q^{-2 n}\left(q+q^{-1}\right)^{2 n} C^{s}_n y,
\label{real cor}
\end{gather}
for $n \geq 0$, and
\begin{gather}
E_{n \delta} \mapsto-q^{-2 n}\left(q+q^{-1}\right)^{2 n-1} C^{s}_n,
\label{im cor}
\end{gather}
for $n \geq 1$.
\end{theorem}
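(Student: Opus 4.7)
The plan is to prove all three formulas of Theorem \ref{Main thm 1} simultaneously by induction on $n$, using the fact that $\varphi$ is a superalgebra homomorphism (the Corollary following Proposition \ref{shuffle serre}) to translate the recursive definitions \eqref{simple vec}--\eqref{im root} of the Damiani generators into identities in the shuffle superalgebra $\mathbb{V}$. For the base, $n=0$ of \eqref{real cor} reduces to $\varphi(A)=x$ and $\varphi(B)=y$, and $n=1$ of \eqref{im cor} is a direct calculation: from $E_\delta=q^{-2}BA+AB$ the super-shuffle rules give $\varphi(E_\delta)=q^{-2}(yx-q^{-2}xy)+(xy-q^{-2}yx)=(1-q^{-4})xy$, which matches $-q^{-2}(q+q^{-1})\{2\}_q xy$.

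For the inductive step, I would fix $n\ge 1$, assume \eqref{real cor}--\eqref{im cor} for all smaller indices, and process the three targets in the order forced by the recursions. First I would handle $\varphi(E_{n\delta})$, since \eqref{im root} expresses it purely through $E_{(n-1)\delta+\alpha_1}$; the inductive hypothesis then yields
\begin{equation*}
\varphi(E_{n\delta})=q^{-2(n-1)}(q+q^{-1})^{2(n-1)}\Bigl(q^{-2}(C^{s}_{n-1}y)\star x+x\star(C^{s}_{n-1}y)\Bigr),
\end{equation*}
and the task reduces to rewriting the bracket as $-q^{-2}(q+q^{-1})\,C^{s}_n$. With $\varphi(E_\delta)$ identified at each level, applying \eqref{real root} then reduces the formulas for $E_{n\delta+\alpha_0}$ and $E_{n\delta+\alpha_1}$ to evaluating the shuffle super-commutators $[xC^{s}_{n-1},C^{s}_1]_{\star}$ and $[C^{s}_1,C^{s}_{n-1}y]_{\star}$ and identifying them with explicit scalar multiples of $xC^{s}_n$ and $C^{s}_n y$.

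The combinatorial heart of the proof, and the step I expect to be the main obstacle, is a single shuffle identity: inserting a letter $x$ or $y$ into the element $C^{s}_{n-1}y$ (resp.\ $xC^{s}_{n-1}$) by the $\star$-product produces a linear combination over Catalan words $w\in\mathrm{Cat}_n$ whose coefficients coincide with \eqref{Cw}. Expanding $x\star(C^{s}_{n-1}y)$ via \eqref{shuffle long 1}--\eqref{shuffle long 3} yields, for each $v=a_1\cdots a_{2n-2}\in\mathrm{Cat}_{n-1}$ and each admissible position $i$, a monomial $a_1\cdots a_i\,x\,a_{i+1}\cdots a_{2n-2}\,y$ weighted by $(-1)^i q^{2(\bar a_1+\cdots+\bar a_i)}$, along with non-Catalan monomials where the inserted $x$ stands to the right of the trailing $y$. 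The delicate point is that when both shuffle contributions are combined with the coefficients $q^{-2}$ and $+1$ dictated by \eqref{im root}, the non-Catalan monomials must cancel and the remaining coefficients must coalesce into $\{2+2(\bar a_1+\cdots+\bar a_i)\}_q$, reproducing \eqref{Cw}. The parity signs $(-1)^r,(-1)^s$ introduced by \eqref{shuffle long 3}--\eqref{shuffle long 4} must be tracked carefully so that they align with the alternating signs in \eqref{Cw} without leaving a spurious residue; this is exactly the cancellation that is hopeless to see from a closed-form formula for $C^{s}(w)$ and that motivates the recursive definition. Once this lemma is in place, a simple bookkeeping of the $q^{-2}$ and $(q+q^{-1})$ factors through one round of the induction produces the advertised scaling constants in \eqref{real cor} and \eqref{im cor}.
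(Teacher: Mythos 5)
Your proposal is correct and follows essentially the same route as the paper: a simultaneous induction transferring the recursions \eqref{simple vec}--\eqref{im root} through $\varphi$, with the combinatorial heart being exactly the paper's Lemma \ref{bala} (the insertion identity showing that the combination $q^{-2}(C^{s}_{n-1}y)\star x + x\star(C^{s}_{n-1}y)$ reproduces the coefficients \eqref{Cw}, with the non-Catalan monomial $vyx$ cancelling), followed by the commutator recurrences for $xC^{s}_n$ and $C^{s}_n y$. The only detail you leave implicit that the paper makes explicit is the auxiliary fact $(xy)\star C^{s}_{n-1}=C^{s}_{n-1}\star(xy)$, which is available at each stage of the induction and is needed to reduce $[xC^{s}_{n-1},xy]_\star$ to $(q+q^{-1})\,xC^{s}_n$.
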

 In (\ref{real cor}), the notations $x C^{s}_n$ and $C^{s}_n y$ refer to the concatenation (or free) product.

In combination with Theorem \ref{Main thm 1} and the construction of the basis by Khoroshkin-Lukierski-Tolstoy presented in \cite{KLT}, we can derive the following   Corollary.
\begin{coro}\label{excessive}
	If we define a linear order
	\begin{gather*}
		x<x C^{s}_1<x C^{s}_2<\cdots<C^{s}_1<C^{s}_2<C^{s}_3<\cdots<C^{s}_2 y<C^{s}_1 y<y,
	\end{gather*}
	then a $PBW$ basis of $U_q^{+}(C(2)^{(2)})$ is obtained by the Catalan elements
\begin{gather*}
	\left\{x C^{s}_n\right\}_{n=0}^{\infty}, \quad\left\{C^{s}_n y\right\}_{n=0}^{\infty}, \quad\left\{C^{s}_n\right\}_{n=1}^{\infty},
\end{gather*}
\end{coro}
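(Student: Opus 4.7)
The proof is essentially a transport-of-structure argument: the KLT $PBW$ theorem gives us a basis of $U_q^+(C(2)^{(2)})$ in terms of the Damiani generators $\{E_{n\delta+\alpha_0}\}$, $\{E_{n\delta+\alpha_1}\}$, $\{E_{n\delta}\}$, and Theorem \ref{Main thm 1} (together with the injectivity of the Rosso embedding $\varphi$) converts this basis into one given by Catalan elements. So the plan is to chase the KLT basis through $\varphi$ and check that the ordering stated in the corollary is exactly the image of the normal ordering used by KLT.

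First, I would invoke the KLT $PBW$ theorem stated just above Section 3.2: the ordered products
\[
E_{\beta_1}^{r_1}\cdots E_{\beta_N}^{r_N},\qquad \beta_1<\cdots<\beta_N\in \mathscr{R}_{+},
\]
form a basis of $U_q^{+}(C(2)^{(2)})$, where the order on $\mathscr{R}_{+}$ is the second normal ordering
\[
\alpha_0<\delta+\alpha_0<2\delta+\alpha_0<\cdots<\delta<2\delta<\cdots<2\delta+\alpha_1<\delta+\alpha_1<\alpha_1
\]
(with $\alpha_0=\delta-\alpha$, $\alpha_1=\alpha$). Next I would apply the Rosso embedding $\varphi$, which is injective as a superalgebra homomorphism. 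Since $\varphi$ is injective, the images of the KLT basis elements remain linearly independent in $\mathbb{V}$, and they span $\varphi(U_q^{+}(C(2)^{(2)}))$.

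Then I would apply Theorem \ref{Main thm 1}, which identifies the images of the three families of Damiani root vectors as nonzero scalar multiples of $xC_n^{s}$, $C_n^{s} y$, and $C_n^{s}$ respectively. The scalars $q^{-2n}(q+q^{-1})^{2n}$ and $-q^{-2n}(q+q^{-1})^{2n-1}$ are invertible in $\mathbb{K}=\mathbb{Q}(q)$, so each KLT generator and the corresponding Catalan element generate the same one-dimensional subspace of $\mathbb{V}$. Consequently, replacing each $E_{n\delta+\alpha_0}$ by $xC_n^{s}$, each $E_{n\delta+\alpha_1}$ by $C_n^{s}y$, and each $E_{n\delta}$ by $C_n^{s}$ in every ordered monomial of $\mathscr{B}^{+}$ produces, up to a nonzero scalar, a new spanning set that is also linearly independent, hence a basis of $\varphi(U_q^{+}(C(2)^{(2)}))\cong U_q^{+}(C(2)^{(2)})$.

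Finally, I would verify that the linear order
\[
x<xC_1^{s}<xC_2^{s}<\cdots<C_1^{s}<C_2^{s}<\cdots<C_2^{s}y<C_1^{s}y<y
\]
is exactly the image under the Damiani$\,\to\,$Catalan correspondence of the second normal ordering on $\mathscr{R}_{+}$: the real roots $n\delta+\alpha_0$ correspond to $xC_n^{s}$ (with $x=xC_0^{s}$), the imaginary roots $n\delta$ to $C_n^{s}$, and the real roots $n\delta+\alpha_1$ to $C_n^{s}y$ (with $y=C_0^{s}y$), and the inequalities match term-by-term. Therefore ordered monomials in the Catalan elements, in this order, form a $PBW$ basis of $U_q^{+}(C(2)^{(2)})$. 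No serious obstacle is expected; the only thing to be careful about is matching the order and confirming that the scalar factors from Theorem \ref{Main thm 1} are all nonzero in $\mathbb{K}$, which they manifestly are.
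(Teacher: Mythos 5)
Your argument is exactly the one the paper intends: it derives the corollary by pushing the KLT $PBW$ basis of Damiani type through the injective Rosso embedding $\varphi$ and using Theorem \ref{Main thm 1} to replace each root vector by the corresponding Catalan element up to a nonzero scalar, checking that the stated linear order is the image of the second normal ordering. The paper leaves this as an immediate consequence without writing out the details, and your proposal correctly supplies them.
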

\begin{remark}
For $U_q^{+}(C(2)^{(2)})$, there exist at least three types of $PBW$ bases: the KLT $PBW$ basis of Damiani type, the KLT $PBW$ basis of Beck type, and the alternative $PBW$ basis. However, the relationships among these three types of $PBW$ bases remain an open question. Theorems \ref{Main thm 1} and \ref{Main thm 2} reveal that both the KLT $PBW$ basis of Damiani type and the KLT $PBW$ basis of Beck type are related to the $PBW$ basis described in Corollary \ref{excessive}. Instead of directly handling the KLT $PBW$ basis of Damiani type and the KLT $PBW$ basis of Beck type, we will focus on the closely-related elements $\left\{x C^{s}_n\right\}_{n=0}^{\infty}$ and $\left\{C^{s}_n y\right\}_{n=0}^{\infty}$ to address this issue. The detailed solutions will be presented in \cite{ZH2}.
\end{remark}
In  $U_q^{+}(C(2)^{(2)})$, we know that all imaginary roots are commute, thanks to Theorem \ref{Main thm 1},   we can  get the following   Corollary immediately.
\begin{coro}
	For $i, j \in \mathbb{N}$, one has
\begin{gather}
	C^{s}_i \star C^{s}_j=C^{s}_j \star C^{s}_i .
	\label{CiCj}
\end{gather}
	\end{coro}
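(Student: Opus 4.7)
The plan is to transport the known commutation relations among the imaginary root vectors $\{E_{n\delta}\}_{n\geq 1}$ in $U_q^+(C(2)^{(2)})$ across the Rosso embedding $\varphi$ and read off the statement inside the $q$-shuffle superalgebra. Recall that in the text it is explicitly noted (citing \cite{KLT}) that $[E_{i\delta}, E_{j\delta}] = 0$ for all $i,j \geq 1$, and that Theorem \ref{Main thm 1} gives the closed form $\varphi(E_{n\delta}) = -q^{-2n}(q+q^{-1})^{2n-1} C^{s}_n$ for $n \geq 1$. Since $\varphi$ is a superalgebra homomorphism (Corollary 17 combined with the injectivity observation following it), it will suffice to check that the commutator bracket in $U_q^+(C(2)^{(2)})$ between $E_{i\delta}$ and $E_{j\delta}$ actually coincides with the ordinary commutator, and to verify that the same is true for $C^{s}_i$ and $C^{s}_j$ in $\mathbb{V}$.

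First I would address the parity bookkeeping. Each Catalan word $w \in \mathrm{Cat}_n$ has exactly $n$ letters $x$ and $n$ letters $y$, hence total length $2n$. Since $\deg(x) = \deg(y) = 1$ in the $\mathbb{Z}_2$-graded superalgebra $\mathbb{V}$, every Catalan word of length $2n$ has even parity, and so does their linear combination $C^{s}_n$. In parallel, on the $U_q^+$ side, $\delta = \alpha + (\delta-\alpha)$ is a sum of two odd roots and is therefore even, so $n\delta$ is even and $\vartheta(i\delta)\vartheta(j\delta) = 0$. Consequently the super-commutator $[E_{i\delta}, E_{j\delta}]$ reduces to the ordinary commutator, and mutatis mutandis the super-commutator of $C^{s}_i$ and $C^{s}_j$ under $\star$ reduces to $C^{s}_i \star C^{s}_j - C^{s}_j \star C^{s}_i$.

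Next I would apply $\varphi$ to the identity $E_{i\delta} E_{j\delta} - E_{j\delta} E_{i\delta} = 0$. Because $\varphi$ respects the superproduct and the two arguments are even, this yields
\begin{equation*}
\varphi(E_{i\delta}) \star \varphi(E_{j\delta}) - \varphi(E_{j\delta}) \star \varphi(E_{i\delta}) = 0.
\end{equation*}
Substituting the formula from Theorem \ref{Main thm 1} gives a common nonzero scalar factor $q^{-2(i+j)}(q+q^{-1})^{2(i+j)-2}$ in front of $C^{s}_i \star C^{s}_j - C^{s}_j \star C^{s}_i$, which may then be cancelled to obtain \eqref{CiCj}. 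The degenerate cases $i = 0$ or $j = 0$ are trivial since $C^{s}_0 = 1$ is a two-sided identity for $\star$.

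I do not expect a genuine obstacle here; the argument is essentially a transport of structure. The only subtle point is the parity check in the previous paragraph: one must confirm that $\delta$ is an even root of $C(2)^{(2)}$ so that the super-commutator of the $E_{n\delta}$'s collapses to an ordinary commutator, and that Catalan words sit in the even part of $\mathbb{V}$, so that no unwanted sign survives when $\varphi$ is applied. Once these parity observations are made explicit, the corollary follows mechanically from Theorem \ref{Main thm 1} and the commutativity of the imaginary root vectors established by Khoroshkin--Lukierski--Tolstoy.
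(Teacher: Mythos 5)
Your proposal is correct and follows essentially the same route as the paper, which derives \eqref{CiCj} ``immediately'' by applying the homomorphism $\varphi$ of Theorem \ref{Main thm 1} to the KLT relation $E_{i\delta}E_{j\delta}=E_{j\delta}E_{i\delta}$ and cancelling the nonzero scalars. Your explicit parity check (that $\delta$ is even and each $C^{s}_n$ lies in the even part of $\mathbb{V}$, so no super-signs intervene) is a worthwhile detail the paper leaves implicit.
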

Prior to commencing the proof of Theorem \ref{Main thm 1}, we introduce two significant maps defined on $U_q^{+}(C(2)^{(2)})$ and the $q$-shuffle superalgebra $\mathbb{V}$. These maps are expected to streamline several aspects of our subsequent proofs, facilitating a more efficient and concise demonstration of the relevant results.
\begin{lemma}\label{anti}
	 There exists an antiautomorphism $\zeta$ of $U_q^{+}(C(2)^{(2)})$ that swaps $A$ and $B$.
\end{lemma}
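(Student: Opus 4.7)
The plan is to define $\zeta$ on the free (super)algebra $\mathbb{K}\langle A,B\rangle$ and then verify that it descends to $U_q^{+}(C(2)^{(2)})$ by preserving the defining ideal. Since the positive part is presented by $A$ and $B$ subject to the two cubic $q$-Serre relations obtained by restricting (\ref{serre 1}) and (\ref{serre 2}) to the ``$+$'' sign, the task reduces to a direct check on those two relations.

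Concretely, I would set $\zeta(A)=B$, $\zeta(B)=A$ and extend antihomomorphically. The key structural input is the Remark immediately following the Serre relations, which records that (\ref{serre 1}) and (\ref{serre 2}) can equivalently be rewritten with their nested $q$-supercommutators reversed, yielding the forms
\[
[[[B,A]_q,A]_q,A]_q=0 \quad\text{and}\quad [A,[A,[A,B]_q]_q]_q=0
\]
on the positive side. Because $\zeta$ is antihomomorphic, applying it to (\ref{serre 1}) written as $[B,[B,[B,A]_q]_q]_q=0$ turns every layer of $q$-supercommutator inside out and simultaneously exchanges $A$ and $B$; the resulting expression is, up to an overall nonzero sign determined by the chosen convention, precisely the reversed form of (\ref{serre 2}) displayed above, hence zero in $U_q^{+}$ by the Remark. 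The analogous calculation sends (\ref{serre 2}) into the ideal generated by (\ref{serre 1}). Consequently $\zeta$ descends to a well-defined anti-endomorphism, and since $\zeta^2$ fixes the generators and is an automorphism, $\zeta^2=\mathrm{id}$, so $\zeta$ is a bijective antiautomorphism.

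The main obstacle is the sign bookkeeping inherent to the super setting: each reversal of a product produces a sign depending on parities, and one must check that these signs propagate consistently through the three nested layers and then match the signs built into the definition of the $q$-supercommutator. This is a finite and mechanical verification, and once it is carried out no further structural input beyond the symmetry already recorded in the Remark is required. An alternative route, which avoids the sign gymnastics, would be to transport the problem through the Rosso embedding $\varphi$ and construct the corresponding ``swap-and-reverse'' map on the image in $\mathbb{V}$, using the manifest $x\leftrightarrow y$ symmetry of the shuffle Serre relations (\ref{serre shuffle 1})--(\ref{serre shuffle 2}); however, the direct approach above is shorter once the Remark is invoked.
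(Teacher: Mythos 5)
The paper states Lemma \ref{anti} without proof, so your write-up fills a genuine gap rather than duplicating an argument, and your approach --- define the swap-and-reverse map on the free superalgebra and check that it preserves the defining ideal --- is correct and is surely what the authors intend. Two remarks. First, the sign bookkeeping you flag as the main obstacle can be avoided altogether by using the monomial form of the two positive $q$-Serre relators, i.e.\ the images under $x\mapsto A$, $y\mapsto B$ of the ideal generators $J^{\pm}$ of Definition \ref{J+J-}: reversing each word and exchanging $A\leftrightarrow B$ sends $A^3B+\{3\}_q A^2BA-\{3\}_q ABA^2-BA^3$ to $AB^3+\{3\}_q BAB^2-\{3\}_q B^2AB-B^3A$, which is $(-1)$ times the other relator because the coefficient string $(1,\{3\}_q,-\{3\}_q,-1)$ is anti-palindromic; hence $\zeta(J)=J$ with no case analysis. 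Second, a small imprecision in your commutator route: since $(\alpha,\alpha)=(\delta-\alpha,\delta-\alpha)$ and both simple roots are odd, one checks layer by layer that $\zeta\bigl([u,v]_q\bigr)=[\zeta(v),\zeta(u)]_q$ with the same parameter $q$, so $\zeta$ carries $[B,[B,[B,A]_q]_q]_q$ directly onto $[[[B,A]_q,A]_q,A]_q$, i.e.\ onto relation (\ref{serre 2}) itself; the ``reversed'' forms recorded in the Remark and the $q\leftrightarrow q^{-1}$ symmetry are therefore not actually needed. Neither remark affects your conclusion, and the closing observation that $\zeta^2$ fixes the generators and hence equals the identity is fine.
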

\begin{coro}
	The map $\zeta$ fixes $E_{n \delta}$ for $n \geq 1$, and swaps $E_{n \delta+\alpha_0}, E_{n \delta+\alpha_1}$ for $n \in \mathbb{N}$.
\end{coro}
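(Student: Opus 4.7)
The plan is a joint induction on $n$ that simultaneously establishes
\[
\zeta(E_{n\delta+\alpha_0})=E_{n\delta+\alpha_1},\qquad \zeta(E_{n\delta+\alpha_1})=E_{n\delta+\alpha_0},\qquad \zeta(E_{n\delta})=E_{n\delta}.
\]
The base case $n=0$ together with $E_\delta$ is immediate: $\zeta(A)=B$, $\zeta(B)=A$ by the definition of $\zeta$, and since $\zeta$ is an antiautomorphism,
\[
\zeta(E_\delta)=\zeta(q^{-2}BA+AB)=q^{-2}\zeta(A)\zeta(B)+\zeta(B)\zeta(A)=q^{-2}BA+AB=E_\delta.
\]

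For the real root vectors I observe that $\alpha_0=\delta-\alpha$ and $\alpha_1=\alpha$ are odd while $\delta$ is even, so each super-commutator in (\ref{real root}) reduces to an ordinary commutator. Since $\zeta$ preserves parity and is an antiautomorphism, it satisfies $\zeta([X,Y])=-[\zeta(X),\zeta(Y)]$ on such pairs. Applying this to the recursion $E_{n\delta+\alpha_0}=[E_{(n-1)\delta+\alpha_0},E_\delta]/(q-q^{-1})$ and invoking the inductive hypothesis yields
\[
\zeta(E_{n\delta+\alpha_0})=\frac{-[E_{(n-1)\delta+\alpha_1},E_\delta]}{q-q^{-1}}=\frac{[E_\delta,E_{(n-1)\delta+\alpha_1}]}{q-q^{-1}}=E_{n\delta+\alpha_1},
\]
and symmetrically $\zeta(E_{n\delta+\alpha_1})=E_{n\delta+\alpha_0}$.

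For the imaginary root vector, applying $\zeta$ to (\ref{im root}) together with the inductive hypothesis for $E_{(n-1)\delta+\alpha_1}$ produces the ``dual'' expression
\[
\zeta(E_{n\delta})=q^{-2}B\,E_{(n-1)\delta+\alpha_0}+E_{(n-1)\delta+\alpha_0}\,B.
\]
To identify this with $E_{n\delta}$, I plan to push the identity into the shuffle superalgebra $\mathbb{V}$ via the injective Rosso embedding $\varphi$. By Theorem~\ref{Main thm 1}, $\varphi(E_{n\delta})=-q^{-2n}(q+q^{-1})^{2n-1}C^s_n$ and $\varphi(E_{(n-1)\delta+\alpha_0})=q^{-2(n-1)}(q+q^{-1})^{2(n-1)}\,x\,C^s_{n-1}$, so the problem reduces to verifying the single shuffle identity
\[
q^{-2}\,y\star(x\,C^s_{n-1})+(x\,C^s_{n-1})\star y=-q^{-2}(q+q^{-1})\,C^s_n,
\]
which I would establish from the shuffle rules (\ref{shuffle long 1})--(\ref{shuffle long 4}) and the recursive definition (\ref{Cw}) of the Catalan coefficients.

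The main obstacle will be this last shuffle identity. The recursive Catalan coefficients $C^s(w)$ carry sign twists specific to the super case, so the bookkeeping is delicate; this is exactly the phenomenon flagged in the ``Advantage'' subsection, and a short combinatorial shortcut of the $U_q(\widehat{\mathfrak{sl}_2})$-type is not available. An alternative route would be to prove the symmetric second closed form $E_{n\delta}=q^{-2}BE_{(n-1)\delta+\alpha_0}+E_{(n-1)\delta+\alpha_0}B$ directly inside $U_q^+(C(2)^{(2)})$ by a separate induction using the cubic $q$-Serre relations, but the shuffle approach is more uniform with the rest of the paper and piggybacks on work already done for Theorem~\ref{Main thm 1}.
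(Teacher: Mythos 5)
Your induction is correct and, for the base case and the real root vectors, is exactly the argument the paper intends: the paper's proof just lists the recursions (\ref{simple vec}), (\ref{real root}), (\ref{im root}) together with the second closed form (\ref{im root 2}), $E_{n \delta}=q^{-2} B E_{(n-1) \delta+\alpha_0}+E_{(n-1) \delta+\alpha_0} B$, and invokes Lemma \ref{anti}. The only real divergence is how that second closed form is justified. The paper simply records (\ref{im root 2}) as one of the relations holding in $U_q^{+}(C(2)^{(2)})$ (it is part of the KLT recursive construction of the root vectors), after which $\zeta(E_{n\delta})=E_{n\delta}$ is immediate; you instead propose to \emph{derive} it through the Rosso embedding and Theorem \ref{Main thm 1}. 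That route does work, but two caveats. First, there is a circularity hazard you should address explicitly: in the paper this corollary sits in the preamble to Theorem \ref{Main thm 1}, and the stated proof of that theorem cites (\ref{im root 2}) among the recurrences satisfied by the root vectors. Your detour is legitimate only because the induction proving Theorem \ref{Main thm 1} can in fact be run using (\ref{real root}) and (\ref{im root}) alone (the Catalan-side recurrences suffice), so (\ref{im root 2}) is not a genuine input there; this needs to be said, otherwise the argument is circular as written. Second, the shuffle identity you reduce to, namely
\begin{equation*}
q^{-2}\,y \star\left(x C^{s}_{n-1}\right)+\left(x C^{s}_{n-1}\right) \star y=-q^{-2}\left(q+q^{-1}\right) C^{s}_n,
\end{equation*}
is precisely the second closed form for $q^{-1}C^{s}_n$ already established in the paper, obtained from (\ref{q-1Cn1}) by applying the antiautomorphism $\zeta$ of $\mathbb{V}$ (which fixes $C^{s}_n$); so the ``delicate sign bookkeeping'' you anticipate is unnecessary — the $\mathbb{V}$-side symmetry gives it for free, and you avoid any fresh computation with the super shuffle product.
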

\begin{proof}
The following   relations hold in $U_q^{+}(C(2)^{(2)})$ for $n \geq 1$:
\begin{gather}
	E_{\alpha_0}=A, \quad E_{\alpha_1}=B, \quad E_\delta=q^{-2} B A+A B,
	\label{simple vec}
\end{gather}
and for $n \geq 1$,
\begin{gather}
	\label{real root}
	E_{n \delta+\alpha_0}=\frac{\left[E_{(n-1) \delta+\alpha_0}, E_\delta\right]}{q-q^{-1}}, \quad E_{n \delta+\alpha_1}=\frac{\left[E_\delta, E_{(n-1) \delta+\alpha_1}\right]}{q-q^{-1}}\\
	\label{im root}
	E_{n \delta}=q^{-2} E_{(n-1) \delta+\alpha_1} A+A E_{(n-1) \delta+\alpha_1},\\
	\label{im root 2}
	E_{n \delta}=q^{-2} B E_{(n-1) \delta+\alpha_0}+E_{(n-1) \delta+\alpha_0} B.
\end{gather}
As is evident from the expressions of the root vectors, this represents a recursive construction. In conjunction with Lemma \ref{anti}, we are able to draw this conclusion.
\end{proof}
\begin{lemma}
	There is an analogous map on $\mathbb{V}$, that we will also call $\zeta$. This $\zeta$ is the antiautomorphism of the free superalgebra $\mathbb{V}$ that swaps $x$ and $y$.
\end{lemma}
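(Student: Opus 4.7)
The plan is to define $\zeta$ explicitly on the basis of words and then verify it is an antiautomorphism for the $q$-shuffle superproduct $\star$. Specifically, for a word $w = a_1 a_2 \cdots a_n \in \mathbb{V}$ I would set
\begin{gather*}
\zeta(w) = \bar{a}_n \bar{a}_{n-1} \cdots \bar{a}_1, \qquad \bar x = y, \quad \bar y = x,
\end{gather*}
and extend $\mathbb{K}$-linearly. From this description $\zeta$ is immediately an involution and an antiautomorphism for the concatenation product on the free superalgebra.

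The substantive task is the identity $\zeta(u \star v) = \zeta(v) \star \zeta(u)$ for all $u,v \in \mathbb{V}$. I would argue by induction on the total length $|u|+|v|$. The base cases, where $u$ or $v$ is trivial, or both are of length one, follow directly from the defining formulas $x \star y = xy - q^{-2}yx$, $y \star x = yx - q^{-2}xy$, $x \star x = (1-q^2)xx$, $y \star y = (1-q^2)yy$, since the swap $x \leftrightarrow y$ interchanges the first two relations and fixes each of the last two. For the inductive step, write $u = u_1 \cdots u_r$ and $v = v_1 \cdots v_s$, apply $\zeta$ to the recursive identity (\ref{shuffle long 4}), and use the induction hypothesis together with the fact that $\zeta$ reverses concatenation to obtain
\begin{align*}
\zeta(u \star v) = \bar v_s \bigl((\bar v_{s-1}\cdots \bar v_1) \star \zeta(u)\bigr) + (-1)^s \bar u_r \bigl(\zeta(v) \star (\bar u_{r-1}\cdots \bar u_1)\bigr)\, q^{\langle u_r,v_1\rangle+\cdots+\langle u_r,v_s\rangle}.
\end{align*}
On the other hand, applying (\ref{shuffle long 3}) directly to $\zeta(v) \star \zeta(u)$ produces the same two summands, but with $q$-exponent $\langle \bar v_s,\bar u_r\rangle + \cdots + \langle \bar v_1, \bar u_r\rangle$. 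These two exponents coincide by the twofold symmetry of $\langle\cdot,\cdot\rangle$: it is symmetric in its two arguments and invariant under the simultaneous swap $x \leftrightarrow y$, both evident from the defining matrix.

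The main obstacle I anticipate is the careful matching of the signs $(-1)^r,(-1)^s$ and the long $q$-exponents after reversal of indices; the lengths of $u$ and $v$ are preserved by $\zeta$, so the parities line up correctly after the swap in the recursion, but some patient bookkeeping is required. No new algebraic input beyond the symmetries of $\langle\cdot,\cdot\rangle$ is needed, and as a sanity check one can run the same argument starting from (\ref{shuffle long 3}) applied to $u \star v$ and comparing with (\ref{shuffle long 4}) applied to $\zeta(v)\star\zeta(u)$.
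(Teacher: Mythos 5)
Your proposal is correct, and it supplies an argument where the paper offers none: the lemma is stated without proof. As literally stated, the lemma only asserts the existence of the concatenation-antiautomorphism swapping $x$ and $y$, which is immediate from the universal property of the free algebra; the genuine content, which the paper uses silently later (e.g.\ when it derives the identity for $x C^{s}_{n}$ by ``applying $\zeta$'' to both sides of (\ref{q-1Cn1}), an equation written in terms of $\star$), is precisely your identity $\zeta(u\star v)=\zeta(v)\star\zeta(u)$. Your induction via (\ref{shuffle long 3}) and (\ref{shuffle long 4}) establishes this correctly: the two displayed expansions match term by term, and the exponents $\sum_{j}\langle u_r,v_j\rangle$ and $\sum_{j}\langle \bar v_j,\bar u_r\rangle$ agree because $\langle\cdot,\cdot\rangle$ is symmetric and invariant under the simultaneous swap $x\leftrightarrow y$. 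One cosmetic slip: in the base case the relations $x\star y$ and $y\star x$ are each \emph{fixed} by $\zeta$ (the reversal and the swap cancel), while $x\star x$ and $y\star y$ are \emph{interchanged} — the opposite of what you wrote — but either way the length-one cases check out, as one verifies directly (e.g.\ $\zeta\bigl(x\star(yyy)\bigr)=(xxx)\star y$). It is also worth noting explicitly, as your computation implicitly confirms, that the relevant $\zeta$ is the plain (unsigned) reversal rather than a Koszul-signed super-antiautomorphism; this is the version consistent with the paper's later applications.
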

\begin{coro}
	 The map $\zeta$ fixes $C^{s}_n$, and the following   diagram commutes.
	 $$
	 \xymatrix{
	 	U_q^{+}(C(2)^{(2)}) \ar[rr]^{\varphi}\ar[dd]^{\zeta} &&\mathbb{V} \ar[dd]^{\zeta}\\
	 	&&&\\
	 	 U_q^{+}(C(2)^{(2)})\ar[rr]^{\varphi}&&\mathbb{V}
	 }
	 $$
\end{coro}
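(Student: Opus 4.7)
The plan is to establish the commutativity of the diagram first, and then derive that $\zeta$ fixes $C^{s}_n$ as an immediate consequence of Theorem \ref{Main thm 1}.

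For the square, I note that $\varphi$ is a superalgebra homomorphism (the Rosso embedding from the corollary to Proposition \ref{shuffle serre}) while $\zeta$ is an antiautomorphism on both $U_q^{+}(C(2)^{(2)})$ and the $q$-shuffle superalgebra $\mathbb{V}$. Hence the two compositions $\varphi \circ \zeta$ and $\zeta \circ \varphi$ are both antihomomorphisms of superalgebras from $U_q^{+}(C(2)^{(2)})$ into $\mathbb{V}$. Since $U_q^{+}(C(2)^{(2)})$ is generated by $A$ and $B$, it suffices to compare the two antihomomorphisms on these two generators. A direct inspection yields
$$\varphi(\zeta(A)) = \varphi(B) = y = \zeta(x) = \zeta(\varphi(A)),$$
and symmetrically $\varphi(\zeta(B)) = x = \zeta(\varphi(B))$, which closes this part.

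For the fixed-point claim, I invoke the second formula of Theorem \ref{Main thm 1}: for each $n \geq 1$,
$$\varphi(E_{n\delta}) = -q^{-2n}(q+q^{-1})^{2n-1}\, C^{s}_n.$$
The preceding corollary asserts that the antiautomorphism $\zeta$ of $U_q^{+}(C(2)^{(2)})$ fixes $E_{n\delta}$. Applying $\zeta$ to $\varphi(E_{n\delta})$ and using the commutativity just proved gives
$$\zeta(C^{s}_n) = -q^{2n}(q+q^{-1})^{1-2n}\, \zeta(\varphi(E_{n\delta})) = -q^{2n}(q+q^{-1})^{1-2n}\, \varphi(\zeta(E_{n\delta})) = -q^{2n}(q+q^{-1})^{1-2n}\, \varphi(E_{n\delta}) = C^{s}_n,$$
while the $n=0$ case is trivial since $C^{s}_0 = 1$.

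The main obstacle I anticipate is purely bookkeeping around the super-signs. One must confirm that the two antiautomorphisms both called $\zeta$ (on $U_q^{+}(C(2)^{(2)})$ and on $\mathbb{V}$) are antiautomorphisms with respect to the same super-sign convention $(xy)^{\ddagger} = (-1)^{\operatorname{deg} x \operatorname{deg} y} y^{\ddagger} x^{\ddagger}$, so that both $\varphi \circ \zeta$ and $\zeta \circ \varphi$ qualify as superalgebra antihomomorphisms and are therefore determined by their values on $A$ and $B$. Once this compatibility is granted, no direct manipulation of the recursive formula (\ref{Cw}) for $C^{s}(w)$ is needed, and the fixed-point identity $\zeta(C^{s}_n) = C^{s}_n$ is a one-line consequence of the closed form in Theorem \ref{Main thm 1}.
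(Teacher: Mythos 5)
Your verification that the square commutes is fine and is essentially the intended argument: both $\varphi\circ\zeta$ and $\zeta\circ\varphi$ are superalgebra antihomomorphisms out of an algebra generated by $A$ and $B$, and they agree on $A$ and $B$, hence everywhere. The super-sign bookkeeping you flag is harmless, since both $\zeta$'s reverse products with the same Koszul sign convention.

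The fixed-point claim $\zeta(C^{s}_n)=C^{s}_n$, however, cannot be obtained the way you propose without circularity. This corollary is stated, and is needed, \emph{before} Theorem \ref{Main thm 1} is proved: the paper's proof of Theorem \ref{Main thm 1} derives the recurrence (\ref{q-1Cn2}) for $q^{-1}C^{s}_n$ in terms of $xC^{s}_{n-1}$, and the recurrence (\ref{Cny}) for $C^{s}_n y$, precisely by ``applying the antiautomorphism $\zeta$'' to (\ref{q-1Cn1}) and (\ref{xCn}) respectively — steps that presuppose $\zeta(C^{s}_{n-1})=C^{s}_{n-1}$ and $\zeta(C^{s}_{n-1}y)=xC^{s}_{n-1}$. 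So invoking the formula $\varphi(E_{n\delta})=-q^{-2n}(q+q^{-1})^{2n-1}C^{s}_n$ to deduce $\zeta(C^{s}_n)=C^{s}_n$ uses a theorem whose proof already depends on the statement you are proving. What is actually required is a proof internal to $\mathbb{V}$: one checks that the reverse-and-swap involution $w\mapsto\zeta(w)$ maps $\mathrm{Cat}_n$ bijectively to itself and that $C^{s}(\zeta(w))=C^{s}(w)$, arguing by induction on $n$ from the recursive definition (\ref{Cw}) (equivalently, one proves a mirror image of Lemma \ref{bala} for $q\,(xv)\star y+q^{-1}y\star(xv)$ and matches coefficients). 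That step is where the real content lies, and your proposal skips it. If you insist on deriving the fixed-point property from Theorem \ref{Main thm 1}, you would first have to supply a proof of Theorem \ref{Main thm 1} that nowhere appeals to the $\zeta$-symmetry, which the paper does not provide.
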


\noindent $\bullet$ {\it The proof of Theorem \ref{Main thm 1}.}
Firstly, it is straightforward to verify that equation (\ref{real cor}) holds for  $n = 0$, and equation (\ref{im cor}) holds for $n = 1$. This is attributed to the fact that the basis elements in equation (\ref{root vec}) satisfy the recurrence relations presented in equations (\ref{real root}), (\ref{im root}), and (\ref{im root 2}).
Subsequently, our subsequent task is to demonstrate that the Catalan elements $x C^{s}_n$, $C^{s}_n y$, and $C^{s}_n$ also satisfy analogous recurrence relations.
\begin{lemma} \label{bala}
		For a balanced word $v=a_1 a_2 \cdots a_m$, one has
\begin{gather}
	\frac{q^{-1}(v y) \star x+q x \star(v y)}{q+q^{-1}}=-q^{-1} \sum_{i=0}^m(-1)^i a_1 \cdots a_i x a_{i+1} \cdots a_m y\left\{2+2 \bar{a}_1+2 \bar{a}_2+\cdots+2 \bar{a}_i\right\}_q.
	\label{pre coe Cw}
\end{gather}
\end{lemma}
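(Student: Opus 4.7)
\textbf{Proof proposal for Lemma \ref{bala}.} The plan is to expand both shuffle products $x\star(vy)$ and $(vy)\star x$ directly from the definitions (\ref{shuffle long 1}) and (\ref{shuffle long 2}), collect coefficients of each monomial, and combine them in the prescribed linear combination. Since $vy$ is a word of length $m+1$, each expansion produces $m+2$ terms indexed by the position where $x$ is inserted: the positions $i = 0, 1, \ldots, m$ give monomials of the form $a_1 \cdots a_i\, x\, a_{i+1} \cdots a_m\, y$, while the position $i = m+1$ gives the exceptional monomial $a_1 \cdots a_m\, y\, x$.

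First I would compute the coefficient of $a_1 \cdots a_i\, x\, a_{i+1} \cdots a_m\, y$ for $0 \leq i \leq m$. Writing $S_i = \bar a_1 + \cdots + \bar a_i$ and using the pairing values $\langle a_j, x\rangle = 2\bar a_j$ and $\langle y, x\rangle = -2$, formula (\ref{shuffle long 1}) gives coefficient $(-1)^i q^{2S_i}$ from $x\star(vy)$, and formula (\ref{shuffle long 2}) gives coefficient $(-1)^{m+1-i} q^{-2 + 2(S_m - S_i)}$ from $(vy)\star x$. The balanced assumption forces $S_m = 0$ and $m$ even, so $(-1)^{m+1-i} = -(-1)^i$, and the second coefficient simplifies to $-(-1)^i q^{-2 - 2S_i}$. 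Combining with the prescribed weights yields
\begin{equation*}
q^{-1}\bigl(-(-1)^i q^{-2-2S_i}\bigr) + q\bigl((-1)^i q^{2S_i}\bigr) = (-1)^i q^{-1}\bigl(q^{2+2S_i} - q^{-2-2S_i}\bigr).
\end{equation*}
Because $2 + 2S_i$ is even, the definition $\{n\}_q = \frac{q^{-n} - (-1)^n q^n}{q+q^{-1}}$ rewrites this as $-(-1)^i q^{-1} (q+q^{-1}) \{2+2S_i\}_q$, which after division by $q+q^{-1}$ matches exactly the coefficient in the right-hand side.

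Next I would verify the exceptional term $a_1 \cdots a_m\, y\, x$ cancels out of the left-hand side. Its coefficient from $x\star(vy)$ is $(-1)^{m+1} q^{2S_m + \langle y,x\rangle} = -q^{-2}$, and from $(vy)\star x$ it is $(-1)^0 q^0 = 1$. The weighted combination $q^{-1}\cdot 1 + q\cdot(-q^{-2}) = 0$ annihilates this monomial, which is needed because it does not appear on the right-hand side. Assembling all contributions gives the identity (\ref{pre coe Cw}).

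The main (though modest) obstacle is bookkeeping of signs: tracking the $(-1)^i$ factor in (\ref{shuffle long 1}) against the $(-1)^{m+1-i}$ factor in (\ref{shuffle long 2}) while exploiting the parity consequence of balance ($m$ even), and then correctly identifying the bracket combination $q^{2+2S_i} - q^{-2-2S_i}$ with $-(q+q^{-1})\{2+2S_i\}_q$ via the super-quantum integer definition. Everything else reduces to substitution.
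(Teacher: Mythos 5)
Your proposal is correct and follows essentially the same route as the paper: expand $x\star(vy)$ and $(vy)\star x$ termwise from (\ref{shuffle long 1}) and (\ref{shuffle long 2}), use balancedness ($S_m=0$, $m$ even) to align signs and exponents, observe that the $vyx$ term cancels in the weighted combination, and identify $q^{2+2S_i}-q^{-2-2S_i}$ with $-(q+q^{-1})\{2+2S_i\}_q$. The bookkeeping in your coefficient computations checks out against the paper's.
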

\begin{proof}
	 Using (\ref{shuffle long 1}) and (\ref{shuffle long 2}), we can get
\begin{gather*}
	 (v y) \star x=v y x-q^{-2} v x y+\sum_{i=0}^{m-1}(-1)^{m-i} a_1 \cdots a_i x a_{i+1} \cdots a_m y q^{\left\langle a_m, x\right\rangle+\left\langle a_{m-1}, x\right\rangle+\cdots+\left\langle a_{i+1}, x\right\rangle}\left(-q^{-2}\right),\\
	 x \star(v y)=\sum_{i=0}^m(-1)^i a_1 \cdots a_i x a_{i+1} \cdots a_m y q^{\left\langle a_1, x\right\rangle+\left\langle a_2, x\right\rangle+\cdots+\left\langle a_i, x\right\rangle}+(-1)^{m+1} v y x q^{-2}.
\end{gather*}
	 Because $v$ is a balanced word, so $m$ is even. Hence,  the left hand side of (\ref{pre coe Cw}) is
\begin{gather*}
\left(q{+}q^{-1}\right)^{-1}\left\{-q^{-3} v x y+q v x y\right\}\\
+\left(q{+}q^{-1}\right)^{-1}\left\{q^{-1}\left(\sum_{i=0}^{m-1}(-1)^i a_1 \cdots a_i x a_{i+1} \cdots a_m y\left(-q^{-2-2 \bar{a}_1-2 \bar{a}_2+\cdots-2 \bar{a}_i}+q^{2+2 \bar{a}+2 \bar{a}_2+\cdots+2 \bar{a}_i}\right)\right)\right\}\\
=-q^{-1} \sum_{i=0}^m(-1)^i a_1 \cdots a_i x a_{i+1} \cdots a_m y\left\{2+2 \bar{a}_1+2 \bar{a}_2+\cdots+2 \bar{a}_i\right\}_q .
\end{gather*}
\end{proof}

With the help of bilinear form $( $,$ )$, we can rewrite Lemma \ref{bala} as follows.
\begin{lemma}
	For a balanced word $v=a_1 a_2 \cdots a_m$ and any word $w$, one has
\begin{gather}
	\left(\frac{q x \star(v y)+q^{-1}(v y) \star x}{q+q^{-1}}, w\right)=-q^{-1} \sum_i(-1)^{i}\left\{2+2 \bar{a}_1+2 \bar{a}_2+\cdots+2 \bar{a}_i\right\}_q,
	\label{coe Cw}
\end{gather}
	where the summation is over the integers $i$ $(0 \leq i \leq m)$ such that $w=a_1 a_2 \cdots a_i x a_{i+1} \cdots a_m y$.
	\end{lemma}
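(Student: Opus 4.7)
The plan is to obtain the statement as a direct consequence of the preceding Lemma \ref{bala} by pairing its identity with the word $w$ through the bilinear form $(\,,\,)$ introduced just above.

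First I would recall that Lemma \ref{bala} expresses
\[
\frac{q^{-1}(vy)\star x + q\,x\star(vy)}{q+q^{-1}} \,=\, -q^{-1}\sum_{i=0}^{m}(-1)^{i}\, a_1 \cdots a_i\, x\, a_{i+1} \cdots a_m y \,\{2 + 2\bar a_1 + 2\bar a_2 + \cdots + 2\bar a_i\}_q,
\]
which is an explicit $\mathbb{F}$-linear combination of standard basis words of $\mathbb{V}$.

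Next I would apply the bilinear form $(\,\cdot\,,w)$ term by term to this identity. Because the standard basis of $\mathbb{V}$ is orthonormal with respect to $(\,,\,)$, we have $(u,w) = \delta_{u,w}$ for every standard basis word $u$. Consequently, pairing with $w$ annihilates every summand in the above expression except those whose index $i$ satisfies $a_1 \cdots a_i\, x\, a_{i+1}\cdots a_m y = w$; each surviving summand contributes its coefficient $-q^{-1}(-1)^i\{2 + 2\bar a_1 + \cdots + 2\bar a_i\}_q$, and their total is precisely the right-hand side of \eqref{coe Cw}.

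The only subtlety---scarcely an obstacle---is that for a given $w$ there can be several indices $i \in \{0,1,\ldots,m\}$ producing the same word $a_1 \cdots a_i\, x\, a_{i+1}\cdots a_m y = w$; this occurs when adjacent letters of $vy$ near the inserted $x$ repeat, so insertions at different positions yield identical strings. The indexing convention in the statement already accommodates these multiplicities by summing over all such $i$, so no additional bookkeeping is required. In short, the proof reduces to a single invocation of orthonormality once Lemma \ref{bala} is in hand.
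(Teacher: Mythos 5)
Your proposal is correct and matches the paper's own treatment: the paper simply states that this lemma is Lemma \ref{bala} rewritten via the bilinear form, which is exactly your argument of pairing with $w$ and invoking orthonormality of the standard basis. Your remark about several indices $i$ producing the same word $w$ is a sensible clarification, but no further work is needed.
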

So, thanks to (\ref{coe Cw}), we can rewrite (\ref{Cw}) as follows
\begin{gather}
C^{s}(w)=-q \sum_{v \in \operatorname{Cat}_{n-1}} C^{s}(v)\left(\frac{q x \star(v y)+q^{-1}(v y) \star x}{q+q^{-1}}, w\right).
\label{Cw2}
\end{gather}
\begin{lemma}\label{when 0}
	Referring to the Lemma above, assume that $v$ is $Catalan$ and $w$ is not. Then in (\ref{coe Cw}) each side is zero.
\end{lemma}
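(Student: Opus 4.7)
The plan is to reduce the lemma to a single combinatorial claim: if $v = a_1 \cdots a_m$ is Catalan, then for every $i \in \{0, 1, \ldots, m\}$ the inserted word
$$w_i \;:=\; a_1 \cdots a_i \, x \, a_{i+1} \cdots a_m \, y$$
is itself Catalan. Granting this claim, both sides of (\ref{coe Cw}) vanish when $w$ is not Catalan, because each side is indexed by the set $\{i : w_i = w\}$, which is empty under our hypothesis. The right-hand side is then an empty sum by definition; for the left-hand side I would invoke Lemma \ref{bala} to rewrite it as
$$-q^{-1} \sum_{i=0}^{m} (-1)^i \{2 + 2\bar{a}_1 + \cdots + 2\bar{a}_i\}_q \, (w_i, w),$$
and each inner product $(w_i, w)$ vanishes since the standard basis of $\mathbb{V}$ is orthonormal and $w \neq w_i$ for all $i$.

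To verify the combinatorial claim I would track the partial sums of the letters of $w_i$. Writing these letters as $b_1, \ldots, b_{m+2}$, the construction gives $\bar{b}_j = \bar{a}_j$ for $j \leq i$, $\bar{b}_{i+1} = 1$, $\bar{b}_j = \bar{a}_{j-1}$ for $i+2 \leq j \leq m+1$, and $\bar{b}_{m+2} = -1$. For $j \leq i$ the partial sum $\bar{b}_1 + \cdots + \bar{b}_j$ coincides with the corresponding partial sum of $v$, hence is $\geq 0$; for $i+1 \leq j \leq m+1$ it equals $1$ plus a partial sum of $v$, hence is $\geq 1$; and the grand total is $0$ because $v$ is balanced and the inserted $x, y$ cancel each other. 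Thus every partial sum of $w_i$ is nonnegative and the total is zero, so $w_i$ is Catalan.

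The argument is essentially routine once the correct claim is isolated, so I expect no real obstacle; the only care needed is with the boundary indices $i = 0$ (no prefix of $v$ before the inserted $x$) and $i = m$ (no $a$-letters between $x$ and $y$), which I would handle explicitly to make sure the partial-sum bookkeeping is valid at the endpoints.
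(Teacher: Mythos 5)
Your proposal is correct and follows the same route as the paper: the paper's entire proof is the one-line observation that each inserted word $a_1\cdots a_i x a_{i+1}\cdots a_m y$ is Catalan, and you simply supply the partial-sum verification of that fact together with the (routine) orthonormality argument showing both sides of (\ref{coe Cw}) then vanish.
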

	\begin{proof}
	The word $a_1 a_2 \cdots a_i x a_{i+1} \cdots a_m y$ is $Catalan$ for $0 \leq i \leq m$.
\end{proof}
\begin{proposition}
	For $n \geq 1$, one has
	\begin{gather}
		\label{q-1Cn1}
q^{-1} C^{s}_n=-\frac{q^{-1}\left(C^{s}_{n-1} y\right) \star x+q x \star\left(C^{s}_{n-1} y\right)}{q+q^{-1}}.
\end{gather}
	\end{proposition}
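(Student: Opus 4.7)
The plan is to run the identity (\ref{Cw2}) in reverse: it expresses the Catalan coefficient $C^{s}(w)$ as a single inner product against the element on the right-hand side of (\ref{q-1Cn1}), so summing over all Catalan $w$ and using $u=\sum_w w(w,u)$ should recover $C^{s}_n$ itself. Concretely, I first unpack the definition
\[
C^{s}_n \;=\; \sum_{w\in\mathrm{Cat}_n} w\,C^{s}(w),
\]
and substitute the reformulation (\ref{Cw2}), which by bilinearity of $(\,,\,)$ and the linearity of the $q$-shuffle superproduct in each slot (so that $\sum_{v\in\mathrm{Cat}_{n-1}} C^{s}(v)\,(vy)$ assembles to $C^{s}_{n-1}\,y$) collapses to
\[
C^{s}_n \;=\; -q\sum_{w\in\mathrm{Cat}_n} w\left(\frac{q\,x\star (C^{s}_{n-1} y) + q^{-1}(C^{s}_{n-1} y)\star x}{q+q^{-1}},\,w\right).
\]

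Next I would enlarge the sum to range over \emph{all} words $w$ of length $2n$. This is the step where Lemma \ref{when 0} is essential: for any balanced $v\in\mathrm{Cat}_{n-1}$ and any non-Catalan word $w$ of length $2n$, the bilinear pairing on the right-hand side of (\ref{coe Cw}) vanishes, hence by bilinearity the corresponding coefficient of $w$ in $q\,x\star(C^{s}_{n-1}y)+q^{-1}(C^{s}_{n-1}y)\star x$ is zero. Words of any other length contribute nothing to the pairing for degree reasons, so enlarging the sum costs nothing.

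Once the sum is over all words, the orthonormality of the standard basis gives
\[
\sum_{w} w\,(u,w) \;=\; u \qquad\text{for }u\in\mathbb{V},
\]
which I apply to $u=\bigl(q\,x\star(C^{s}_{n-1} y)+q^{-1}(C^{s}_{n-1} y)\star x\bigr)/(q+q^{-1})$. This turns the displayed expression for $C^{s}_n$ into
\[
C^{s}_n \;=\; -q\,\frac{q\,x\star (C^{s}_{n-1} y)+q^{-1}(C^{s}_{n-1} y)\star x}{q+q^{-1}},
\]
and dividing by $q$ yields (\ref{q-1Cn1}).

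The main conceptual obstacle is already discharged by the earlier lemmas: Lemma \ref{bala} rewrites the coefficient appearing in the recursive definition (\ref{Cw}) of $C^{s}(w)$ as a pairing, and Lemma \ref{when 0} guarantees the non-Catalan coefficients of the shuffle expression vanish, so that the Catalan sum faithfully represents the whole element. The only remaining care is bookkeeping: to correctly factor $\sum_v C^{s}(v)(vy)$ out of the pairing one uses bilinearity of $(\,,\,)$ together with the fact that the shuffle product is $\mathbb{K}$-linear in each slot, which is immediate from (\ref{shuffle long 1})--(\ref{shuffle long 4}).
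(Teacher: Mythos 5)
Your proposal is correct and is essentially the paper's own argument: both rest on the reformulation (\ref{Cw2}) of $C^{s}(w)$ as an inner product (via Lemma \ref{bala}), on Lemma \ref{when 0} to dispose of the non-Catalan words, and on the nondegeneracy/orthonormality of the word basis. The only cosmetic difference is that the paper verifies that both sides of (\ref{q-1Cn1}) have the same pairing with every word $w$, whereas you expand $C^{s}_n$ in the word basis and reassemble the right-hand side; these are the same computation read in opposite directions.
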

\begin{proof}
	For any word $w$ in $\mathbb{V}$, we have
\begin{gather*}
\left(q^{-1} C^{s}_n, w\right)= \begin{cases}q^{-1} C^{s}(w), & w \in \mathrm{Cat}_n, \\ 0, & w \notin \mathrm{Cat}_n,\end{cases}
\end{gather*}
and
$$
\begin{aligned}
	\left(-\frac{q^{-1}\left(C^{s}_{n-1} y\right) \star x+q x \star\left(C^{s}_{n-1} y\right)}{q+q^{-1}}, w\right)
	=\sum_{v \in Cat_{n-1}} C^{s}(v)\left(-\frac{q x \star(v y)+q^{-1}(v y) \star x}{q+q^{-1}}, w\right)
\end{aligned}
$$
By Lemma \ref{when 0}, each side of the above equation has the same inner product with $w$.	
	\end{proof}
\begin{proposition}
	For $n \geq 1$, one has
\begin{gather*}
	\label{q-1Cn2}
	q^{-1} C^{s}_n=-\frac{q\left(x C^{s}_{n-1}\right) \star y+q^{-1} y \star\left(x C^{s}_{n-1}\right)}{q+q^{-1}} .
\end{gather*}
\end{proposition}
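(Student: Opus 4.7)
The plan is to deduce this identity from the preceding proposition~(\ref{q-1Cn1}) by applying the antiautomorphism $\zeta$ on $\mathbb{V}$ introduced just above, which swaps $x \leftrightarrow y$ and fixes each Catalan element $C^{s}_n$. First I would upgrade the commutative diagram of the preceding corollary into a genuine compatibility between $\zeta$ and $\star$ on the image of $\varphi$: for any $a,b \in U_q^{+}(C(2)^{(2)})$, the chain of equalities $\zeta(\varphi(a) \star \varphi(b)) = \varphi(\zeta(ab)) = \varphi(\zeta(b)\zeta(a)) = \zeta(\varphi(b)) \star \zeta(\varphi(a))$ shows that $\zeta$ acts as an antiautomorphism for $\star$ on $\varphi(U_q^{+}(C(2)^{(2)}))$. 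The elements $C^{s}_{n-1}y$ and $xC^{s}_{n-1}$ both lie in this image (up to a scalar, being the $\varphi$-targets of the root vectors $E_{(n-1)\delta+\alpha_1}$ and $E_{(n-1)\delta+\alpha_0}$ under the recursive construction), so the compatibility applies to the expressions on the right-hand side of~(\ref{q-1Cn1}).

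Next I would apply $\zeta$ to both sides of~(\ref{q-1Cn1}). The left-hand side is fixed since $\zeta(C^{s}_n)=C^{s}_n$ and $\zeta$ is $\mathbb{K}$-linear. On the right-hand side, using $\zeta(x)=y$, $\zeta(y)=x$, and $\zeta(C^{s}_{n-1}y)=xC^{s}_{n-1}$, together with the antiautomorphism property just established, the term $(C^{s}_{n-1}y)\star x$ is sent to $y \star (xC^{s}_{n-1})$ while $x \star (C^{s}_{n-1}y)$ is sent to $(xC^{s}_{n-1}) \star y$. Re-collecting the coefficients $q^{-1}$ and $q$ in the numerator yields exactly $-\dfrac{q\,(xC^{s}_{n-1})\star y + q^{-1}\, y \star (xC^{s}_{n-1})}{q+q^{-1}}$, which is the desired right-hand side.

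The main obstacle is to justify the antiautomorphism property of $\zeta$ with respect to $\star$ on the relevant elements, and in particular to confirm that the super-signs implicit in an antiautomorphism of a $\mathbb{Z}_2$-graded algebra do not corrupt the identity. This is painless in the situation at hand because $C^{s}_{n-1}$ is a sum of words of even length with equally many $x$'s and $y$'s, so it has even parity; consequently $C^{s}_{n-1}y$ and $xC^{s}_{n-1}$ carry the same parity as $y$ and $x$ respectively, and the parity bookkeeping on the two sides of the $\zeta$-transformed identity matches exactly. As a safeguard, an entirely self-contained derivation can be produced by mirroring the argument of the preceding proposition: establish the analog of Lemma~\ref{bala} for $(xv)\star y$ and $y \star (xv)$ with $v$ balanced (the computation is identical to the one performed there with the roles of $x$ and $y$ interchanged), rewrite the coefficient $C^{s}(w)$ via that identity and the bilinear form~$(\,,\,)$, and then match inner products against an arbitrary word $w \in \mathbb{V}$ in the same manner as in the proof of~(\ref{q-1Cn1}).
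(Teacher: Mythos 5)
Your proposal is correct and follows exactly the paper's argument: the paper's proof is the one-line instruction ``Apply the antiautomorphism $\zeta$ to each side of (\ref{q-1Cn1}),'' and your computation of where $\zeta$ sends each term, including the swap of the coefficients $q$ and $q^{-1}$, reproduces the intended derivation. The additional care you take with the antiautomorphism property of $\zeta$ relative to $\star$ and with the parity of $C^{s}_{n-1}$ is a sensible elaboration of details the paper leaves implicit.
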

\begin{proof}
	Apply the antiautomorphism $\zeta$ to each side of (\ref{q-1Cn1}).
\end{proof}
\begin{proposition}
	For $n \geq 1$, one has
	\begin{gather*}
		\label{xCn}
		x C^{s}_n=\frac{\left(x C^{s}_{n-1}\right) \star(x y)-(x y) \star\left(x C^{s}_{n-1}\right)}{q+q^{-1}}.
	\end{gather*}
\end{proposition}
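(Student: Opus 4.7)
The plan is to expand both shuffle products via the recursive formula (\ref{shuffle long 3}), recognize the dominant contribution through the preceding proposition, and dispose of a residual commutator term.

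First I would apply (\ref{shuffle long 3}) to $(xy)\star(xC^{s}_{n-1})$ by splitting $u=xy$ as $u_1=x$, $u_2=y$; since $v=xC^{s}_{n-1}$ has first letter equal to the prepended $x$, the relevant $q$-weight $\langle x,x\rangle+\langle y,x\rangle=0$ is trivial, producing
\begin{equation*}
(xy)\star(xC^{s}_{n-1}) \;=\; x\bigl(y\star(xC^{s}_{n-1})\bigr)+x\bigl((xy)\star C^{s}_{n-1}\bigr).
\end{equation*}
Applying (\ref{shuffle long 3}) symmetrically to $(xC^{s}_{n-1})\star(xy)$ with $u=xC^{s}_{n-1}$, the odd length $2n-1$ contributes the sign $(-1)^{2n-1}=-1$ and the total balance $+1$ contributes the factor $q^{2}$, yielding
\begin{equation*}
(xC^{s}_{n-1})\star(xy) \;=\; x\bigl(C^{s}_{n-1}\star xy\bigr)-q^{2}\,x\bigl((xC^{s}_{n-1})\star y\bigr).
\end{equation*}

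Second, subtracting these two identities and regrouping gives
\begin{equation*}
(xC^{s}_{n-1})\star xy-xy\star(xC^{s}_{n-1}) \;=\; x\bigl[C^{s}_{n-1}\star xy-xy\star C^{s}_{n-1}\bigr]-x\bigl[q^{2}(xC^{s}_{n-1})\star y+y\star(xC^{s}_{n-1})\bigr].
\end{equation*}
By the preceding proposition (after multiplication by $q(q+q^{-1})$) the second bracket equals $-(q+q^{-1})C^{s}_{n}$, so this contribution is exactly $(q+q^{-1})xC^{s}_{n}$ once concatenated with $x$ on the left. The identity of the proposition therefore reduces to the vanishing of the residual term $x\bigl[C^{s}_{n-1}\star xy-xy\star C^{s}_{n-1}\bigr]$.

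The main obstacle is precisely this commutativity $C^{s}_{n-1}\star xy=xy\star C^{s}_{n-1}$ in $\mathbb{V}$. I would handle it by simultaneous induction on $n$ with Theorem \ref{Main thm 1}. Under the inductive hypothesis that Theorem \ref{Main thm 1} holds at all indices $\leq n-1$, the superalgebra homomorphism $\varphi$ sends $E_{\delta}$ and $E_{(n-1)\delta}$ to nonzero scalar multiples of $xy$ (via $C^{s}_{1}=\{2\}_{q}\,xy$) and of $C^{s}_{n-1}$, respectively. Since the imaginary root vectors $E_{m\delta}$ mutually commute in $U_{q}^{+}(C(2)^{(2)})$ by \cite{KLT}, and $\varphi$ is a superalgebra homomorphism, their images must $\star$-commute in $\mathbb{V}$, which is exactly the needed identity. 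Plugging this back into the display above yields $(xC^{s}_{n-1})\star xy-xy\star(xC^{s}_{n-1})=(q+q^{-1})xC^{s}_{n}$ and completes the induction step.
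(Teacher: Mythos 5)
Your proposal is correct and follows essentially the same route as the paper: expand both products via the recursion (\ref{shuffle long 3}), reduce the bracket $q^{2}(xC^{s}_{n-1})\star y+y\star(xC^{s}_{n-1})$ to $-(q+q^{-1})C^{s}_{n}$ using the preceding proposition, and kill the residual term via the $\star$-commutativity $C^{s}_{n-1}\star(xy)=(xy)\star C^{s}_{n-1}$, which the paper likewise obtains from (\ref{CiCj}) by induction on $n$. Your explicit framing of that last step as a simultaneous induction with Theorem \ref{Main thm 1} (via $\varphi$ and the commuting imaginary root vectors) is exactly the content of the paper's appeal to (\ref{CiCj}).
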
	
\begin{proof}
	Using (\ref{shuffle long 3}), we obtain
	\begin{gather}
\label{xCn-1*xy}
	\left(x C^{s}_{n-1}\right) \star(x y)=x\left(C^{s}_{n-1} \star(x y)\right)-x\left(\left(x C^{s}_{n-1}\right) \star y\right) q^2,\\
	\label{xy*xCn-1}
	(x y) \star\left(x C^{s}_{n-1}\right)=x\left(y \star\left(x C^{s}_{n-1}\right)\right)+x\left((x y) \star C^{s}_{n-1}\right) .
\end{gather}
	We claim that
	$$
	(x y) \star C^{s}_{n-1}=C^{s}_{n-1} \star(x y) .
	$$
	It is true by (\ref{CiCj}) and induction on $n$.

		By (\ref{q-1Cn1}), (\ref{xCn-1*xy}) and (\ref{xy*xCn-1}), we obtain
	$$
	\begin{aligned}
		x C^{s}_n & =-q x \frac{q\left(x C^{s}_{n-1}\right) \star y+q^{-1} y \star\left(x C^{s}_{n-1}\right)}{q+q^{-1}} \\
		& =\frac{\left(x C^{s}_{n-1}\right) \star(x y)-(x y) \star\left(x C^{s}_{n-1}\right)}{q+q^{-1}} .
	\end{aligned}
	$$

This completes the proof.
\end{proof}
	\begin{proposition}
	For $n \geq 1$, one has
\begin{gather*}
	\label{Cny}
	C^{s}_n y=\frac{(x y) \star\left(C^{s}_{n-1} y\right)-\left(C^{s}_{n-1} y\right) \star(x y)}{q+q^{-1}}.
\end{gather*}
\end{proposition}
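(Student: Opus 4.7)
The plan is to derive this identity from the preceding proposition on $xC^{s}_n$ simply by applying the antiautomorphism $\zeta$, in the same spirit in which the second formula for $q^{-1}C^{s}_n$ was obtained from the first by applying $\zeta$ to (\ref{q-1Cn1}). All the ingredients are already in place: $\zeta$ swaps $x\leftrightarrow y$, $\zeta$ fixes each Catalan element $C^{s}_n$ by the preceding corollary, and the commutative diagram $\varphi\circ\zeta=\zeta\circ\varphi$ combined with the facts that $\zeta$ is an antiautomorphism of $U_q^{+}(C(2)^{(2)})$ and that $\varphi$ is an injective superalgebra homomorphism forces $\zeta$ to reverse the shuffle product $\star$ on elements lying in $\varphi(U_q^{+}(C(2)^{(2)}))$.

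First I would check the action of $\zeta$ on the building blocks of the preceding identity. Because $C^{s}_n$ is supported on even-length words, the concatenation $xC^{s}_n$ is sent to $\zeta(C^{s}_n)\zeta(x)=C^{s}_n y$, and similarly $\zeta(xC^{s}_{n-1})=C^{s}_{n-1}y$; the word $xy$ is itself fixed under $\zeta$. By Theorem \ref{Main thm 1} the elements $xC^{s}_{n-1}$, $xy$ and $xC^{s}_n$ all lie in the image of $\varphi$ (they are nonzero scalar multiples of $\varphi(E_{(n-1)\delta+\alpha_0})$, $\varphi(E_\delta)$ and $\varphi(E_{n\delta+\alpha_0})$ respectively), so I may legitimately use the antiautomorphism property of $\zeta$ on $(\varphi(U_q^{+}(C(2)^{(2)})),\star)$ to conclude
$$\zeta\bigl((xC^{s}_{n-1})\star(xy)\bigr)=(xy)\star(C^{s}_{n-1}y),\qquad \zeta\bigl((xy)\star(xC^{s}_{n-1})\bigr)=(C^{s}_{n-1}y)\star(xy).$$
Applying $\zeta$ to both sides of the preceding proposition then immediately yields the asserted expression for $C^{s}_n y$.

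The only genuinely subtle point is justifying that $\zeta$ reverses the shuffle products, not the concatenation products; this is where one invokes the commutative diagram together with injectivity of $\varphi$. As a safety net, an entirely parallel direct calculation is also available, mirroring the proof of the $xC^{s}_n$ identity line-for-line: expand $(xy)\star(C^{s}_{n-1}y)$ and $(C^{s}_{n-1}y)\star(xy)$ by (\ref{shuffle long 3}) and (\ref{shuffle long 4}), use the mutual commutativity (\ref{CiCj}) of Catalan elements (together with the fact that $xy$ is a scalar multiple of $C^{s}_1$) to interchange $(xy)\star C^{s}_{n-1}$ with $C^{s}_{n-1}\star(xy)$, and then substitute the formula (\ref{q-1Cn1}) for $q^{-1}C^{s}_n$ in terms of $x$ and $C^{s}_{n-1}y$. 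The terms collect in exactly the same pattern as in the proof of the preceding proposition, producing the right-hand side $\bigl((xy)\star(C^{s}_{n-1}y)-(C^{s}_{n-1}y)\star(xy)\bigr)/(q+q^{-1})$ and confirming the result independently.
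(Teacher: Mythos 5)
Your proposal is correct and follows exactly the paper's route: the paper's entire proof is ``Apply the antiautomorphism $\zeta$ to each side of the identity for $xC^{s}_n$.'' Your additional care in justifying that $\zeta$ reverses the $\star$-product on the image of $\varphi$ (and the backup direct computation) goes beyond what the paper records but does not change the argument.
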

\begin{proof}
Apply the antiautomorphism $\zeta$ to each side of (\ref{xCn}).
\end{proof}
Until now, the proof of Theorem \ref{Main thm 1} is complete.
\begin{lemma}
	For $i, j \in \mathbb{N}$, the following   holds in $U_q^{+}(C(2)^{(2)})$
\begin{gather}
	E_{i \delta+\alpha_1} E_{j \delta+\alpha_0}=q^2 E_{j \delta+\alpha_0} E_{i \delta+\alpha_1}+q^2 E_{(i+j+1) \delta} .
	\label{com real root}
\end{gather}
\end{lemma}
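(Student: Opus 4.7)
The plan is to verify the identity by pushing it through the Rosso super-embedding $\varphi$, so that it becomes an equality in the $q$-shuffle superalgebra $\mathbb{V}$, and then prove the resulting identity by induction. First I would apply $\varphi$ to both sides using Theorem \ref{Main thm 1}; since $\varphi$ is injective, the claim reduces (after clearing the common factor $q^{-2(i+j)}(q+q^{-1})^{2(i+j)}$) to an identity of the shape
\begin{gather*}
(C^{s}_i y)\star (xC^{s}_j)\ \pm\ q^2\,(xC^{s}_j)\star (C^{s}_i y)\ =\ -(q+q^{-1})\,C^{s}_{i+j+1},
\end{gather*}
where the sign is dictated by the super convention on odd--odd products (note that $xC^{s}_j$ and $C^{s}_i y$ are both odd-parity elements of $\mathbb{V}$, since $C^{s}_j$ is a sum of even-length words).

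Next I would prove this identity by induction on $j$. The base case $j=0$ reads
\begin{gather*}
(C^{s}_i y)\star x\ \pm\ q^2\,x\star (C^{s}_i y)\ =\ -(q+q^{-1})\,C^{s}_{i+1},
\end{gather*}
which is exactly the recursive description $q^{-1}C^{s}_n=-\tfrac{q^{-1}(C^{s}_{n-1}y)\star x+qx\star(C^{s}_{n-1}y)}{q+q^{-1}}$ proved just before Theorem \ref{Main thm 1}. For the inductive step I would insert the recursion $xC^{s}_j=\tfrac{(xC^{s}_{j-1})\star(xy)-(xy)\star(xC^{s}_{j-1})}{q+q^{-1}}$ into both $(C^{s}_i y)\star(xC^{s}_j)$ and $(xC^{s}_j)\star(C^{s}_i y)$, expand via (\ref{shuffle long 3})--(\ref{shuffle long 4}), and simplify using the inductive hypothesis at $(i,j-1)$ (and, if needed, at $(i+1,j-1)$), together with the commutativity $C^{s}_p\star C^{s}_q=C^{s}_q\star C^{s}_p$ from (\ref{CiCj}). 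The antiautomorphism $\zeta$ of Lemma \ref{anti} yields the mirror recursion for $C^{s}_j y$, which will let me simplify intermediate expressions symmetrically and absorb part of the $j$-dependence into the $i$-dependence.

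The main obstacle will be the bookkeeping of super signs: because $xC^{s}_j$, $C^{s}_i y$, and $xy$ each carry odd-length ingredients, the shuffle super-products (\ref{shuffle long 3})--(\ref{shuffle long 4}) contribute signs $(-1)^r$ and $(-1)^s$ that proliferate quickly, and a single slip propagates through every subsequent case. To control this I would pair each use of the recursion with its $\zeta$-image so that sign contributions can be compared on both sides before any cancellation is performed. As an alternative route, should the sign bookkeeping become unwieldy, I would argue directly inside $U_q^{+}(C(2)^{(2)})$: inducting on $j$ via $E_{j\delta+\alpha_0}=\tfrac{[E_{(j-1)\delta+\alpha_0},E_\delta]}{q-q^{-1}}$, using that the $\{E_{n\delta}\}$ mutually commute together with the defining recursion (\ref{im root}), one reduces to the base case $j=0$, which is immediate from (\ref{simple vec}).
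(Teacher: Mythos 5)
The paper offers no proof of this lemma at all: it is asserted (it is essentially one of the Khoroshkin--Lukierski--Tolstoy relations among root vectors) and then immediately \emph{used} to derive the Catalan identity (\ref{-q-1Ci+j+1}) by applying $\varphi$ and Theorem \ref{Main thm 1}. Your primary route runs in the opposite direction: the shuffle identity you propose to establish by induction is precisely the content of (\ref{-q-1Ci+j+1}), and you would then pull it back through the injective $\varphi$. That inversion is legitimate and not circular, since Theorem \ref{Main thm 1} is proved before this lemma appears; but it means you are volunteering to prove the corollary directly, and your induction on $j$ is only a plan. The base case is indeed (\ref{q-1Cn1}), but the inductive step --- substituting the recursion for $xC^{s}_j$, expanding the resulting triple shuffle products with their super signs, and closing the induction using the hypotheses at $(i,j-1)$, $(i+1,j-1)$ and (\ref{CiCj}) --- is where all of the work lies, and you have not verified that it closes. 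Your fallback route (induction on $j$ inside $U_q^{+}(C(2)^{(2)})$ via (\ref{real root})) is the more economical argument and closer to what the paper implicitly relies on; note, however, that the base case $j=0$ for general $i$ comes from (\ref{im root}), not from (\ref{simple vec}) (the latter only settles $i=j=0$), and the inductive step still needs the commutation of $E_{i\delta+\alpha_1}$ with $E_\delta$ together with the mutual commutativity of the $E_{n\delta}$.

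The ``$\pm$'' you leave unresolved deserves to be pinned down, because it exposes a sign error in the statement as printed. Taking $i=j=0$ and using (\ref{simple vec}), the displayed identity reads $BA=q^{2}AB+q^{2}(q^{-2}BA+AB)$, i.e. $2q^{2}AB=0$, which is false. The correct relation is
\begin{gather*}
E_{i\delta+\alpha_1}E_{j\delta+\alpha_0}+q^{2}E_{j\delta+\alpha_0}E_{i\delta+\alpha_1}=q^{2}E_{(i+j+1)\delta},
\end{gather*}
that is, the first term on the right-hand side of (\ref{com real root}) should carry $-q^{2}$, reflecting the odd parity of both real root vectors. With this sign, applying $\varphi$ and clearing the common scalar gives exactly
\begin{gather*}
(C^{s}_{i}y)\star(xC^{s}_{j})+q^{2}\,(xC^{s}_{j})\star(C^{s}_{i}y)=-(q+q^{-1})\,C^{s}_{i+j+1},
\end{gather*}
which is (\ref{-q-1Ci+j+1}) after relabelling and whose $j=0$ case is (\ref{q-1Cn1}). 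So your parity diagnosis is correct and the ``$+$'' branch of your $\pm$ is the right one; but as it stands the proposal is an outline, and a complete proof still requires carrying out one of the two inductions in full.
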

\begin{coro}
	For $i, j \in \mathbb{N}$, the following   holds in $\mathbb{V}$
\begin{gather}
	-q^{-1} C^{s}_{i+j+1}=\frac{q\left(x C^{s}_i\right) \star\left(C^{s}_j y\right)+q^{-1}\left(C^{s}_j y\right) \star\left(x C^{s}_i\right)}{q+q^{-1}}.
	\label{-q-1Ci+j+1}
\end{gather}
\end{coro}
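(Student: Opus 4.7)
The plan is to deduce the identity by applying the Rosso-type superalgebra embedding $\varphi\colon U_q^{+}(C(2)^{(2)}) \to \mathbb{V}$ to both sides of the commutation relation (\ref{com real root}), and then using the closed-form expressions provided by Theorem \ref{Main thm 1} to replace every root-vector image by its Catalan counterpart. Since $\varphi$ carries ordinary (super)products in $U_q^{+}(C(2)^{(2)})$ to $\star$-products in $\mathbb{V}$, the relation (\ref{com real root}) becomes, after translation, an identity linking $(C^{s}_i y) \star (x C^{s}_j)$, $(x C^{s}_j) \star (C^{s}_i y)$ and $C^{s}_{i+j+1}$ in the $q$-shuffle superalgebra.

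Concretely, first I would substitute
\[
\varphi(E_{i\delta+\alpha_1}) = q^{-2i}(q+q^{-1})^{2i}\, C^{s}_i y,\qquad \varphi(E_{j\delta+\alpha_0}) = q^{-2j}(q+q^{-1})^{2j}\, x C^{s}_j,
\]
\[
\varphi(E_{(i+j+1)\delta}) = -q^{-2(i+j+1)}(q+q^{-1})^{2(i+j)+1}\, C^{s}_{i+j+1}
\]
into the image of (\ref{com real root}). Every term then carries the common scalar factor $q^{-2(i+j)}(q+q^{-1})^{2(i+j)}$, which I would divide out. Next, interchanging the dummy indices $i\leftrightarrow j$ aligns the bare notation $x C^{s}_j$, $C^{s}_i y$ with the target notation $x C^{s}_i$, $C^{s}_j y$. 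Finally, multiplying through by $-q^{-1}/(q+q^{-1})$ should land the shuffle identity exactly in the shape of (\ref{-q-1Ci+j+1}). As a sanity check, the specialization $i=j=0$ should collapse to (\ref{q-1Cn1}) at $n=1$, giving the already-established base case.

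The only delicate point is bookkeeping the signs inherited from the super setting: both $E_{i\delta+\alpha_1}$ and $E_{j\delta+\alpha_0}$ are odd root vectors, so the quadratic products appearing on either side of (\ref{com real root}) must be transported through $\varphi$ with the correct super sign convention, exactly in the spirit of how the $q$-super-commutators were unfolded in the proof of Proposition \ref{shuffle serre}. Once these signs are tracked faithfully, no new combinatorial input---in particular, no Catalan-profile arguments or recursive manipulation of $C^{s}(w)$---is required: the claim reduces to Theorem \ref{Main thm 1} plus an elementary scalar rearrangement of the image of (\ref{com real root}).
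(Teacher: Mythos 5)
Your derivation is exactly the intended one: the paper states this corollary with no separate proof precisely because it follows by applying $\varphi$ to (\ref{com real root}), substituting the closed forms from Theorem \ref{Main thm 1}, cancelling the common scalar $q^{-2(i+j)}(q+q^{-1})^{2(i+j)}$, relabelling $i\leftrightarrow j$, and dividing by $-q(q+q^{-1})$. Your emphasis on the super-sign bookkeeping is well placed — both real root vectors are odd, so the correct form of (\ref{com real root}) carries a $-q^{2}$ in front of $E_{j\delta+\alpha_0}E_{i\delta+\alpha_1}$ (as the $i=j=0$ check against $E_\delta=q^{-2}BA+AB$ confirms), and with that sign your computation lands precisely on (\ref{-q-1Ci+j+1}).
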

\subsection{Connection bewteen $U_q^{+}(C(2)^{(2)})$ and $q$-shuffle superalgebra \uppercase\expandafter{\romannumeral2} }
Next we discuss a variation on Beck $PBW$ basis, due to Khoroshkin-Lukierski-Tolstoy \cite{KLT}, and establish the correspondence  between KLT  $PBW$ basis of Beck type and Catalan words. Our discussion will involve some generating functions in an indeterminate $t$.
\begin{defi}\cite{KLT}
	Define the elements $\left\{E_{n \delta}^{\mathrm{B}}\right\}_{n=1}^{\infty}$ of Beck type in $U_q^{+}(C(2)^{(2)})$ such that
\begin{gather}
	\exp \left(-\left(q+q^{-1}\right) \sum_{k=1}^{\infty} E_{k \delta}^{\mathrm{B}} t^k\right)=1-\left(q+q^{-1}\right) \sum_{k=1}^{\infty} E_{k \delta} t^k,
	\label{Beck}
\end{gather}
where $E_{n\delta}^B$ denotes the $PBW$ generators of Beck type, and $E_{n\delta}$ indicates the $PBW$ generators of Damiani type.
\end{defi}
\begin{remark}
	In fact, $E_{n \delta}^{\mathrm{B}}$ and $E_{k \delta}$ can be related by the  Schur relations as follows
\begin{gather}
	E_{n \delta}^{\mathrm{B}}=\sum_{p_1+2 p_2+\ldots+n p_n=n} \frac{\left(\left(q^{-1}+q\right)\right)^{\sum p_i-1}\left(\sum_{i=1}^n p_i-1\right) !}{p_{1} ! \cdots p_{n} !}\left(E_\delta\right)^{p_1} \cdots\left(E_{n \delta}\right)^{p_n},
	\label{B to D}
\end{gather}
and
\begin{gather}
E_{n \delta}=\sum_{p_1+2 p_2+\ldots+n p_n=n} \frac{\left(-\left(q+q^{-1}\right)\right)^{\sum p_i-1}}{p_{1} ! \cdots p_{n} !}\left(E_\delta^{\text {B }}\right)^{p_1} \cdots\left(E_{n \delta}^{\text {B }}\right)^{p_n}.
\label{D to B}
\end{gather}
\end{remark}
\begin{example}
\begin{gather*}
E_\delta=E_\delta^{\text {B }}, \quad E_{2 \delta}=E_{2 \delta}^{\text {B }}-\frac{q+q^{-1}}{2}\left(E_\delta^{\text {B }}\right)^2,\\
E_{3 \delta}=E_{3 \delta}^{\text {B }}-\left(q+q^{-1}\right) E_\delta^{\text {B }} E_{2 \delta}^{\text {B }}+\frac{\left(q+q^{-1}\right)^2}{6}\left(E_\delta^{\text {B }}\right)^3.
\end{gather*}
Moreover,
\begin{gather*}
E_\delta^{\text {B }}=E_\delta, \quad E_{2 \delta}^{\text {B }}=E_{2 \delta}+\frac{q+q^{-1}}{2} E_\delta^2,\\
E_{3 \delta}^{\mathrm{B}}=E_{3 \delta}+\left(q+q^{-1}\right) E_\delta E_{2 \delta}+\frac{\left(q+q^{-1}\right)^2}{3} E_\delta^3 .
\end{gather*}
\end{example}
We clarify how the elements $\left\{E_{n \delta}\right\}_{n=1}^{\infty}$ and $\left\{E_{n \delta}^{\text {B }}\right\}_{n=1}^{\infty}$ are related.
\begin{proposition}\cite{KLT}\label{KLT}
	A $PBW$ basis for $U_q^{+}(C(2)^{(2)})$ is obtained by the elements
\begin{gather*}
	\left\{E_{n \delta+\alpha_0}\right\}_{n=0}^{\infty}, \quad\left\{E_{n \delta+\alpha_1}\right\}_{n=0}^{\infty}, \quad\left\{E_{n \delta}^{\text {B }}\right\}_{n=1}^{\infty}
\end{gather*}
in the linear order
\begin{gather*}
E_{\alpha_0}<E_{\delta+\alpha_0}<E_{2 \delta+\alpha_0}<\cdots<E_\delta^{\text {B }}<E_{2 \delta}^{\text {B }}<E_{3 \delta}^{\text {B }}<\cdots<E_{2 \delta+\alpha_1}<E_{\delta+\alpha_1}<E_{\alpha_1}.
\end{gather*}
\end{proposition}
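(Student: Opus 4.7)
The plan is to deduce Proposition \ref{KLT} from the Damiani-type $PBW$ theorem established earlier by performing a triangular change of variables inside the commutative subalgebra of imaginary root vectors. Denote by $\mathcal{I}\subseteq U_q^{+}(C(2)^{(2)})$ the subalgebra generated by $\{E_{n\delta}\}_{n\geq 1}$. Since these elements mutually commute by \cite{KLT}, the algebra $\mathcal{I}$ is a polynomial algebra in the $E_{n\delta}$, and the ordered monomials $E_\delta^{r_1}E_{2\delta}^{r_2}\cdots E_{n\delta}^{r_n}$ (in the specified ascending order) form a basis of $\mathcal{I}$.

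First I would unpack the defining identity (\ref{Beck}). Taking the formal logarithm of both sides in $\mathbb{K}[[t]]\otimes\mathcal{I}$ and equating coefficients of $t^n$ yields precisely the Schur relations (\ref{B to D}) and (\ref{D to B}). The key structural consequence is that for every $n\geq 1$,
\[ E_{n\delta}^{\mathrm{B}}=E_{n\delta}+P_n\bigl(E_\delta,\dots,E_{(n-1)\delta}\bigr),\qquad E_{n\delta}=E_{n\delta}^{\mathrm{B}}+Q_n\bigl(E_\delta^{\mathrm{B}},\dots,E_{(n-1)\delta}^{\mathrm{B}}\bigr), \]
with $P_n$ and $Q_n$ polynomials in lower-index imaginary generators. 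In particular each $E_{n\delta}^{\mathrm{B}}$ lies in $\mathcal{I}$; because $\mathcal{I}$ is commutative, the Beck imaginary generators $\{E_{n\delta}^{\mathrm{B}}\}_{n\geq 1}$ mutually commute as well.

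Next I would translate this triangularity into a change of basis of $\mathcal{I}$. Grade $\mathcal{I}$ by assigning degree $n$ to $E_{n\delta}$; the transition matrix relating the ordered monomials in $\{E_{n\delta}\}$ to those in $\{E_{n\delta}^{\mathrm{B}}\}$ on each homogeneous component is then unitriangular in the partition order, with unit determinant. Hence the ordered monomials in $\{E_{n\delta}^{\mathrm{B}}\}_{n\geq 1}$ also form a basis of $\mathcal{I}$. Substituting this new basis into the Damiani $PBW$ factorization while leaving the real root vectors $\{E_{n\delta+\alpha_0}\}_{n\geq 0}$ and $\{E_{n\delta+\alpha_1}\}_{n\geq 0}$ untouched produces the desired basis in the prescribed order $E_{\alpha_0}<E_{\delta+\alpha_0}<\cdots<E_{\delta}^{\mathrm{B}}<E_{2\delta}^{\mathrm{B}}<\cdots<E_{2\delta+\alpha_1}<E_{\delta+\alpha_1}<E_{\alpha_1}$, and linear independence is preserved because the substitution is carried out factorwise inside the commutative subalgebra $\mathcal{I}$.

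The main obstacle I expect is rigorously justifying the Schur change of variables as an invertible substitution on $\mathcal{I}$ rather than a formal manipulation of generating functions in $t$. One must verify by induction on $n$ that the coefficients in (\ref{B to D}) and (\ref{D to B}) are mutual inverses on each homogeneous piece, and that the $(q+q^{-1})$ factors introduce no singular denominators over $\mathbb{K}=\mathbb{Q}(q)$. The super structure poses no additional difficulty here, since $\mathcal{I}$ sits entirely in the even part of $U_q^{+}(C(2)^{(2)})$, so the classical argument of Khoroshkin--Lukierski--Tolstoy transfers verbatim.
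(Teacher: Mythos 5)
The paper supplies no proof of this proposition at all: it is quoted directly from Khoroshkin--Lukierski--Tolstoy \cite{KLT}, so there is no internal argument to compare yours against. Your derivation is nonetheless correct, and it is essentially the standard mechanism by which a Beck-type basis is deduced from a Damiani-type one (compare \cite{BCP} for $U_q(\widehat{\mathfrak{sl}_2})$). The two load-bearing points are exactly the ones you isolate. First, the ordered monomials $E_{\delta}^{r_1}E_{2\delta}^{r_2}\cdots E_{n\delta}^{r_n}$ already occur among the Damiani PBW monomials of $\mathscr{B}^{+}$, hence are linearly independent, and they span the subalgebra $\mathcal{I}$ generated by the mutually commuting $\{E_{n\delta}\}_{n\geq 1}$, so $\mathcal{I}$ is a polynomial algebra. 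Second, the Schur relations (\ref{B to D}) and (\ref{D to B}) are unitriangular with respect to the refinement order on partitions of $n$: the partition $(n)$ contributes $E_{n\delta}$ with coefficient $\bigl(q+q^{-1}\bigr)^{0}\,0!/1!=1$, every other term corresponds to a partition with at least two parts, and all coefficients lie in $\mathbb{Q}(q)$, so the change of variables is invertible on each homogeneous component of $\mathcal{I}$. Since $n\delta$ is an even root, $\mathcal{I}$ sits in the even part and no sign corrections enter; substituting the new imaginary block into each Damiani-ordered monomial, with the real-root prefix and suffix untouched, is then block-unitriangular on all of $U_q^{+}(C(2)^{(2)})$ and yields the claimed basis in the stated linear order. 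The only cosmetic caveat is that the two displayed expansions in (\ref{B to D})--(\ref{D to B}) should be checked to be mutually inverse (equivalently, derived from the single generating identity (\ref{Beck}) by $\exp$/$\log$ in the commutative ring $\mathcal{I}[[t]]$), which you correctly flag and which poses no difficulty.
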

 Next we recall some relations satisfied by the  KLT $PBW$ generators of Beck type.	
\begin{lemma}\label{com rel real and im}
	The elements $\left\{E_{n \delta}^{\text {B }}\right\}_{n=1}^{\infty}$ mutually commute. Moreover, for $k \geq 1$ and $\ell \geq 0$,
\begin{gather}
\label{com real and beck im 1}
	\left[E_{\ell \delta+\alpha_0}, E_{k \delta}^{\mathrm{B}}\right]=(-1)^{k-1} \frac{q^{k(\alpha, \alpha)}-q^{-k(\alpha, \alpha)}}{k\left(q+q^{-1}\right)} E_{(k+\ell) \delta+\alpha_0},\\
\label{com real and beck im 2}
\left[E_{k \delta}^{\mathrm{B}}, E_{\ell \delta+\alpha_1}\right]=(-1)^{k-1} \frac{q^{k(\alpha, \alpha)}-q^{-k(\alpha, \alpha)}}{k\left(q+q^{-1}\right)} E_{(k+\ell) \delta+\alpha_1}.
\end{gather}
\end{lemma}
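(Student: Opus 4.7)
The plan is to prove the mutual commutativity of the Beck imaginaries by reducing it to that of the Damiani imaginaries, and then to establish the commutation formulae by deriving their Damiani-level analogues first and translating via the exponential relation (\ref{Beck}). The first assertion is immediate: the Damiani imaginaries $\{E_{n\delta}\}_{n\geq 1}$ mutually commute (as recalled just before Theorem 11), and the Schur relation (\ref{B to D}) expresses each $E_{n\delta}^{\mathrm{B}}$ as a polynomial in $\{E_{j\delta}\}_{j\leq n}$, so mutual commutativity transfers at once.

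For (\ref{com real and beck im 1}), the first step is to establish the Damiani-level commutator $[E_{\ell\delta+\alpha_0},\, E_{k\delta}]$ by induction on $k$. The base case $k=1$ is immediate from (\ref{real root}): $[E_{\ell\delta+\alpha_0}, E_\delta] = (q - q^{-1})\, E_{(\ell+1)\delta+\alpha_0}$. For $k \geq 2$, expand $E_{k\delta} = q^{-2} E_{(k-1)\delta+\alpha_1} A + A E_{(k-1)\delta+\alpha_1}$ via (\ref{im root}) and apply the quasi-commutation (\ref{com real root}) to move $E_{\ell\delta+\alpha_0}$ past each factor; this yields a recursion expressing $[E_{\ell\delta+\alpha_0}, E_{k\delta}]$ as a polynomial combination of products $E_{a\delta+\alpha_0} E_{b\delta}$ and the higher real-root vector $E_{(k+\ell)\delta+\alpha_0}$. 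This identity is most cleanly packaged in terms of the generating series $\mathcal{E}^{\alpha_0}(s) = \sum_{\ell\geq 0} E_{\ell\delta+\alpha_0} s^\ell$ and $\mathcal{E}_\delta(t) = \sum_{k\geq 1} E_{k\delta} t^k$.

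Next, take the formal logarithm of (\ref{Beck}) to obtain
\begin{equation*}
-(q+q^{-1})\,\mathcal{E}_\delta^{\mathrm{B}}(t) \;=\; \log\!\bigl(1 - (q+q^{-1})\,\mathcal{E}_\delta(t)\bigr) \;=\; -\sum_{k\geq 1} \frac{(q+q^{-1})^k}{k}\,\mathcal{E}_\delta(t)^k .
\end{equation*}
Commuting $E_{\ell\delta+\alpha_0}$ through both sides, using the Damiani-level formula from the previous paragraph together with the already-proven mutual commutativity of the Beck imaginaries, and matching the coefficient of $s^\ell t^k$, one extracts precisely (\ref{com real and beck im 1}). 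The prefactor $(-1)^{k-1}/k$ in the stated formula is the Taylor coefficient of $\log(1-z)$, while the factor $\frac{q^{k(\alpha,\alpha)} - q^{-k(\alpha,\alpha)}}{q+q^{-1}}$ emerges from evaluating the resulting generating-series identity. Finally, (\ref{com real and beck im 2}) follows from (\ref{com real and beck im 1}) by applying the antiautomorphism $\zeta$ of Lemma \ref{anti}, which swaps $E_{n\delta+\alpha_0} \leftrightarrow E_{n\delta+\alpha_1}$ and fixes each $E_{n\delta}$, and therefore fixes each $E_{n\delta}^{\mathrm{B}}$ via the Schur relation (\ref{B to D}).

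The main obstacle I anticipate is bookkeeping of signs and parities in the super setting: the odd parity of $\alpha$ is what changes the Beck--Chari--Pressley prefactor from $(q - q^{-1})$ to $(q + q^{-1})$ in (\ref{Beck}) and what produces the alternating sign $(-1)^{k-1}$ in (\ref{com real and beck im 1})--(\ref{com real and beck im 2}). One must verify carefully that the super quasi-commutation (\ref{com real root}) and each supercommutator along the Damiani recursion and the logarithmic expansion interact consistently, so that no sign error accumulates; working with the shuffle-algebra image from Theorem \ref{Main thm 1} as a cross-check should be helpful here.
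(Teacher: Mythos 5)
The paper offers no proof of this lemma at all: it is recalled verbatim from Khoroshkin--Lukierski--Tolstoy \cite{KLT}, as the surrounding text (``Next we recall some relations satisfied by the KLT $PBW$ generators of Beck type'') makes explicit. So there is no in-paper argument to compare against, and your proposal should be judged as a standalone reconstruction. Your first assertion is genuinely proved and correct: the Damiani imaginaries $\{E_{n\delta}\}$ commute (stated just after (\ref{im root})), the Schur relation (\ref{B to D}) writes each $E_{n\delta}^{\mathrm{B}}$ as a polynomial in them, and mutual commutativity transfers. Your reduction of (\ref{com real and beck im 2}) to (\ref{com real and beck im 1}) via $\zeta$ is also sound, provided you note that $\zeta$ is an \emph{anti}automorphism, so it reverses the bracket, $\zeta([X,Y])=-[\zeta(X),\zeta(Y)]$ for $X$ odd and $Y$ even, which is exactly what turns $[E_{\ell\delta+\alpha_0},E_{k\delta}^{\mathrm{B}}]$ into $[E_{k\delta}^{\mathrm{B}},E_{\ell\delta+\alpha_1}]$.

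The gap is in the core of (\ref{com real and beck im 1}). The Damiani-level commutator $[E_{\ell\delta+\alpha_0},E_{k\delta}]$ that your induction is supposed to produce is precisely the content of Lemma \ref{R I} of the paper (equation (\ref{com rel real and im 1})), which is itself a nontrivial identity carrying the extra sum $\sum_{\ell=1}^{i-1}(-1)^{\ell}q^{-2\ell}E_{(j+\ell)\delta+\alpha_0}E_{(i-\ell)\delta}$; you assert that expanding (\ref{im root}) and using (\ref{com real root}) ``yields a recursion'' but never derive it, and without that explicit formula the subsequent generating-series extraction cannot be checked. Likewise the passage through $\log\bigl(1-(q+q^{-1})\mathcal{E}_\delta(t)\bigr)$ is only named, not carried out, and your attribution of the prefactor $(-1)^{k-1}/k$ to the Taylor coefficients of $\log(1-z)$ is not right as stated --- $\log(1-z)=-\sum_{k\ge1}z^k/k$ has coefficients $-1/k$ with no alternation, so the sign $(-1)^{k-1}$ must instead come from the $(-1)^{i}$ factors already present in the Damiani-level relation (or equivalently from the odd parity of $\alpha$), which is exactly the bookkeeping you defer. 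In short: the strategy is the standard Beck--Chari--Pressley one and is the right way to prove the lemma from scratch, but as written the proposal is a plan rather than a proof of (\ref{com real and beck im 1})--(\ref{com real and beck im 2}); the two computations on which everything rests are left undone.
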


For $n \in \mathbb{N}$, let $\mathbb{V}_n$  denote the subspace of $\mathbb{V}$ spanned by the words of length $n$. The summation $\mathbb{V}=\sum_{n \in \mathbb{N}} \mathbb{V}_n$ is direct and the summands are mutually orthogonal. We have $\mathbb{V}_0=\mathbb{F} 1$. We have $\mathbb{V}_r \mathbb{V}_s \subseteq \mathbb{V}_{r+s}$ for $r, s \in \mathbb{N}$. By these notions, the sequence $\left\{\mathbb{V}_n\right\}_{n \in \mathbb{N}}$ is a
grading of the superalgebra $\mathbb{V}$.
\begin{lemma}\label{sep}
	For $r, s \in \mathbb{N}$ and $X, X^{\prime} \in \mathbb{V}_r$ and $Y, Y^{\prime} \in \mathbb{V}_s$, we have
\begin{gather*}
	\left( X Y, X^{\prime} Y^{\prime}\right)=\left( X, X^{\prime}\right)\left( Y, Y^{\prime}\right).
\end{gather*}
\end{lemma}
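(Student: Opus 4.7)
The plan is to reduce the identity to the orthonormality of the standard basis of words via bilinearity, then observe that the statement is really a statement about unique concatenation of words of prescribed lengths. First I would expand $X, X', Y, Y'$ in the standard basis: write
\[
X=\sum_{u} \lambda_u\, u,\qquad X'=\sum_{u} \lambda'_u\, u,\qquad Y=\sum_{v} \mu_v\, v,\qquad Y'=\sum_{v} \mu'_v\, v,
\]
where $u$ ranges over words of length $r$ and $v$ over words of length $s$. Since the products $XY$ and $X'Y'$ are computed in the free superalgebra using concatenation, one has
\[
XY=\sum_{u,v} \lambda_u \mu_v\, (uv),\qquad X'Y'=\sum_{u',v'} \lambda'_{u'} \mu'_{v'}\, (u'v'),
\]
where each $uv$ and $u'v'$ is a word of length $r+s$.

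Next, using the bilinearity of $(\,,\,)$ and the fact that the standard basis is orthonormal, I would obtain
\[
(XY,X'Y')=\sum_{u,v,u',v'} \lambda_u \mu_v \lambda'_{u'} \mu'_{v'}\, (uv,\, u'v').
\]
The crux of the argument is then the following trivial but essential observation: if $u,u'$ are words of length $r$ and $v,v'$ are words of length $s$, then the concatenated words $uv$ and $u'v'$ coincide (as elements of the standard basis of $\mathbb{V}_{r+s}$) if and only if $u=u'$ and $v=v'$, because the splitting of a word of length $r+s$ into a prefix of length $r$ and a suffix of length $s$ is unique. Hence $(uv,u'v')=\delta_{u,u'}\delta_{v,v'}$.

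Substituting this back yields
\[
(XY,X'Y')=\Bigl(\sum_{u} \lambda_u \lambda'_u\Bigr)\Bigl(\sum_{v} \mu_v \mu'_v\Bigr)=(X,X')(Y,Y'),
\]
which is the desired identity. I do not anticipate any serious obstacle here: the statement does not involve the $q$-shuffle product $\star$ at all (only the concatenation product of the free superalgebra), so no sign or $q$-coefficient subtleties arise, and the argument is essentially the uniqueness of word factorization combined with orthonormality.
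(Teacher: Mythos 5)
Your proof is correct. The paper actually states Lemma \ref{sep} without any proof, treating it as immediate; your argument --- expanding in the orthonormal word basis, using bilinearity, and noting that a word of length $r+s$ splits uniquely into a length-$r$ prefix and a length-$s$ suffix so that $(uv,u'v')=\delta_{u,u'}\delta_{v,v'}$ --- is exactly the standard justification, and you are right that the concatenation (not shuffle) product is what is meant, since the identity would fail for $\star$.
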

\begin{defi}\label{J+J-}
	Define $J^{+}, J^{-} \in \mathbb{V}$ as
\begin{gather}
\label{J+}
	J^{+}=x x x y+\{3\}_q x x y x-\{3\}_q x y x x-y x x x,\\
\label{J-}
	J^{-}=y y y x+\{3\}_q y y x y-\{3\}_q y x y y-x y y y .
\end{gather}
\end{defi}
\begin{defi}
	Let $\mathcal{A} = \mathcal{A}_0 \oplus \mathcal{A}_1$ be a superalgebra over a field $\mathbb{K}$. A superideal  $\mathcal{I}$ of $\mathcal{A}$ is a two-sided ideal of $\mathcal{A}$ that is closed under the $\mathbb{Z}/2\mathbb{Z}$-grading, $\mathcal{I}$ admits a direct sum decomposition:
	\[
	\mathcal{I} = (\mathcal{I} \cap \mathcal{A}_0) \oplus (\mathcal{I} \cap \mathcal{A}_1).
	\]

	Equivalently, $\mathcal{I}$ satisfies two conditions:\\
	1. For all $a \in \mathcal{A}$ and $i \in \mathcal{I}$, $ai \in \mathcal{I}$ and $ia \in \mathcal{I}$;\\
	2. For all homogeneous elements $i \in \mathcal{I}$, the parity of $i$  is preserved—if $i \in \mathcal{I} \cap \mathcal{A}_k$ ($k=0,1$), then $i$ cannot be written as a sum of elements from $\mathcal{A}_{1-k}$.
\end{defi}
\begin{defi}
	Let $J$ denote the two-sided superideal of the free superalgebra $\mathbb{V}$ generated by $J^{+}, J^{-}$.
\end{defi}
Consider the quotient superalgebra $\mathbb{V} / J$. Since the superalgebra $\mathbb{V}$ is freely generated by $x$ and $y$, there exists a superalgebra homomorphism $\xi: \mathbb{V} \rightarrow U_q^{+}$ that sends $x \mapsto A$ and $y \mapsto B$. The kernel of $\xi$ is equal to $J$. Therefore, $\xi$ induces a superalgebra isomorphism $\mathbb{V} / J \rightarrow U_q^{+}$ that sends $x+J \mapsto A$ and $y+J \mapsto B$.
\begin{defi}
	Let $U$ denote the subalgebra of the $q$-shuffle superalgebra $\mathbb{V}$, generated by $x, y$.
\end{defi}
\begin{proposition}\label{J orth U}
	The ideal $J$  and the subalgebra $U$ are the orthogonal complements with respect to the bilinear form $(,)$.
\end{proposition}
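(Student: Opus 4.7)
The plan is to prove the two inclusions $U\subseteq J^{\perp}$ and $J^{\perp}\subseteq U$ separately, via the standard Rosso quantum-shuffle machinery adapted to our super setting. The backbone is an adjunction, delivered by the bilinear form $(,)$, between the two product structures on $\mathbb{V}$: the concatenation product and the $q$-shuffle $\star$.

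First I would endow $\mathbb{V}$, viewed with the concatenation product, with a coproduct $\Delta$ in which $x,y$ are primitive, $\Delta(x)=x\otimes 1+1\otimes x$ and $\Delta(y)=y\otimes 1+1\otimes y$, extended multiplicatively with respect to the braided super-tensor product on $\mathbb{V}\otimes\mathbb{V}$ dictated by the super-grading and the pairing $\langle\cdot,\cdot\rangle$. Extending $(,)$ to $\mathbb{V}\otimes\mathbb{V}$ by $(a\otimes b,c\otimes d)=(a,c)(b,d)$, the key identity I would verify (by induction on word length, using the recursive formulas (\ref{shuffle long 1})--(\ref{shuffle long 4})) is the adjunction
\begin{gather*}
(u\star v,w)=(u\otimes v,\Delta(w))\qquad \forall\,u,v,w\in\mathbb{V}.
\end{gather*}

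Next I would show that $J$ is a $\Delta$-coideal, namely $\Delta(J)\subseteq J\otimes\mathbb{V}+\mathbb{V}\otimes J$. Because $\Delta$ is a (braided) algebra homomorphism and $J$ is a two-sided concatenation ideal, it suffices to verify this on the generators $J^{+},J^{-}$. A degree argument (using that $J$ has no nonzero component in degree $\leq 3$) further reduces the assertion to showing that $J^{\pm}$ are primitive, i.e. $\Delta(J^{\pm})=J^{\pm}\otimes 1+1\otimes J^{\pm}$, which by the adjunction is equivalent to the vanishing $(u\star v,J^{\pm})=0$ for all pairs of nonidentity words $u,v$ whose lengths sum to $4$. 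This vanishing is a direct rearrangement of the shuffle $q$-Serre relations in Proposition \ref{shuffle serre}. Once the coideal property is in hand, the adjunction forces $J^{\perp}$ to be closed under $\star$: for $u,v\in J^{\perp}$,
\begin{gather*}
(u\star v,J)=(u\otimes v,\Delta(J))\subseteq(u\otimes v,\,J\otimes\mathbb{V}+\mathbb{V}\otimes J)=0.
\end{gather*}
Since $x,y\in J^{\perp}$ for trivial degree reasons and $U$ is the $\star$-subalgebra generated by $x,y$, we conclude $U\subseteq J^{\perp}$.

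For the reverse inclusion I would use a graded dimension count. The map $\xi$ recalled just before this proposition gives $\mathbb{V}/J\cong U_q^{+}$, while the Rosso embedding $\varphi$ identifies $U$ with $U_q^{+}$; hence $\dim(\mathbb{V}_n/J_n)=\dim(U\cap\mathbb{V}_n)$ in every graded component. Because the restriction of $(,)$ to $\mathbb{V}_n$ is non-degenerate (words are orthonormal), $\dim(J^{\perp}\cap\mathbb{V}_n)=\dim\mathbb{V}_n-\dim J_n=\dim(U\cap\mathbb{V}_n)$, which together with the previous paragraph forces $U\cap\mathbb{V}_n=J^{\perp}\cap\mathbb{V}_n$ for every $n$. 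Thus $U=J^{\perp}$ and, by non-degeneracy again, $U^{\perp}=J$.

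The main technical obstacle is the book-keeping of signs and $q$-powers in the braided super setting: the multiplication on $\mathbb{V}\otimes\mathbb{V}$ must simultaneously track the parity twist $(-1)^{|\cdot||\cdot|}$ and the $q$-twist $q^{\langle\cdot,\cdot\rangle}$, and these twists must be shown to mesh both with the recursive definition of $\star$ (for the adjunction step) and with the shuffle Serre identity (for the coideal step). Fortunately, the relevant coefficients are already encoded in the definition of $\star$ and in Proposition \ref{shuffle serre}, so the effort is essentially organizational rather than computational.
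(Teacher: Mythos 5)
The paper states this proposition without proof, presenting it as part of the standard Rosso--Green quantum-shuffle package transported to the super setting, so there is no argument of the authors' to compare yours against; what follows assesses your reconstruction on its own terms. Your overall route --- the adjunction between concatenation-with-braided-coproduct and the $q$-shuffle product, the coideal property of $J$, and a graded dimension count using $\mathbb{V}/J\cong U_q^{+}\cong U$ --- is the standard one and is sound. The reduction of the coideal property to the primitivity of $J^{\pm}$ via the absence of components of $J$ in degrees $\le 3$ is correct, and the dimension count is legitimate given that the paper itself asserts the injectivity of $\varphi$; note only that your second half genuinely depends on that injectivity, which the paper also does not prove but merely cites.

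The one step you assert too quickly is the primitivity of $J^{\pm}$. The identities of Proposition \ref{shuffle serre} are precisely the statement $\Psi(J^{\pm})=0$, where $\Psi\colon\mathbb{V}\to(\mathbb{V},\star)$ is the quantum symmetrizer sending a word $v_1\cdots v_n$ to $v_1\star\cdots\star v_n$ (equivalently $\Psi=\varphi\circ\xi$); via your adjunction this only yields $(u\star v,J^{\pm})=0$ when $u$ and $v$ are themselves $\star$-products of letters. Primitivity requires $(u\star v,J^{\pm})=0$ for \emph{all} nonempty words $u,v$ of total length $4$ (for instance $u=xy$, $v=xx$, or $u=x$, $v=xxy$), and these pairings are not literal rearrangements of Proposition \ref{shuffle serre}: they form a finite list of routine verifications (only pairs whose letters total three $x$'s and one $y$, resp.\ three $y$'s and one $x$, can pair nontrivially with $J^{+}$, resp.\ $J^{-}$), all of which do vanish, but they must actually be carried out. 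Alternatively you could bypass the coproduct entirely: the symmetrizer $\Psi$ is self-adjoint for $(\,,\,)$ because the braiding $a\otimes b\mapsto(-1)^{\vartheta(a)\vartheta(b)}q^{\langle a,b\rangle}b\otimes a$ is symmetric, so in each finite-dimensional graded piece $(\operatorname{im}\Psi)^{\perp}=\ker\Psi$; since $\operatorname{im}\Psi=U$ and $\ker\Psi=\ker\xi=J$ (the latter equality using the injectivity of $\varphi$), the proposition follows in one stroke, with Proposition \ref{shuffle serre} entering only through the well-definedness of $\varphi$.
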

\begin{proof}
	First, we verify \(J \subseteq U^\perp\). Each element \(j \in J\) is a linear combination of elements of the form \(A J^+ B\) and \(A J^- B\), where \(A, B \in \mathbb{V}\).
	
	Take an arbitrary \(u \in U\). If \(\deg(A J^\pm B) \neq \deg u\), then \((A J^\pm B, u) = 0\) trivially.
	
	If \(\deg(A J^\pm B) = \deg u\), note that \(J^+\) and \(J^-\) are homogeneous of degree 4. By the orthonormality of the standard basis of \(\mathbb{V}\) with respect to \((\cdot,\cdot)\), direct computation from the definitions of \(J^+\) and \(J^-\) yields \((J^\pm, u_4) = 0\) for all \(u_4 \in U\) with \(\deg u_4 = 4\). By Lemma \ref{sep}, we get \((A J^\pm B, u) = 0\).
	
	Since this result holds for all \(A, B \in \mathbb{V}\) and all \(u \in U\), we have \((j, u) = 0\) for all \(j \in J\), so \(J \subseteq U^\perp\).
	
	Next, we prove \(U^\perp \subseteq J\). Recall that there exists a surjective superalgebra homomorphism \(\xi: \mathbb{V} \rightarrow U_q^+\) with \(\ker \xi = J\), so \(\mathbb{V}/J \cong U_q^+\). Let \(U\) be the subalgebra of \(\mathbb{V}\) generated by \(x\) and \(y\), \(U\) is isomorphic to \(U_q^+\), so \(\dim(\mathbb{V}/J) = \dim U\), which implies \(\dim \mathbb{V} = \dim J + \dim U\).
Together with the bilinear form $(\cdot,\cdot)$ being nondegenerate, we obtain $\dim J = \dim U^\perp$. We therefore conclude that $J = U^\perp$.
	
	Finally, we show \(U = J^\perp\). By the symmetry of \((\cdot,\cdot)\), \((u, j) = (j, u) = 0\) for all \(u \in U\) and \(j \in J\), so \(U \subseteq J^\perp\). By the nondegeneracy of \((\cdot,\cdot)\) again, \(\dim \mathbb{V} = \dim J + \dim J^\perp\). Since \(\dim \mathbb{V} = \dim J + \dim U\), we have \(\dim U = \dim J^\perp\). Combining with \(U \subseteq J^\perp\), we get \(U = J^\perp\).
	\end{proof}
\begin{theorem}\label{Main thm 2}
	The map $\varphi $ sends the KLT $PBW$ generators of Beck type to the Catalan elements as follows:
	\begin{gather}
	E_{n \delta}^{\text {B }} \mapsto(-1)^n \frac{\{2 n\}_q}{n} q^{-2 n}\left(q+q^{-1}\right)^{2 n-1} x C^{s}_{n-1} y,
	\label{Beck cor}
\end{gather}
	for $n \geq 1$.
\end{theorem}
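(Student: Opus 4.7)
The plan is to pass the Schur-type expansion (\ref{B to D}) of $E_{n\delta}^{\mathrm{B}}$ through the Rosso embedding $\varphi$ and then substitute the already-established Damiani formula from Theorem \ref{Main thm 1}. Since $\varphi$ is an algebra homomorphism, products of $E_{k\delta}$ translate into shuffle products of $-q^{-2k}(q+q^{-1})^{2k-1}C^s_k$; moreover, by (\ref{CiCj}) the Catalan elements mutually commute under $\star$, so the order inside each monomial is immaterial. Collecting the resulting powers of $q$ and $(q+q^{-1})$, the theorem becomes equivalent to the shuffle-algebraic identity
\[
\sum_{\substack{p_1+2p_2+\cdots+np_n=n\\ N=\sum_{i}p_i}}\frac{(-1)^N(N-1)!}{p_1!\cdots p_n!}\,(C^s_1)^{\star p_1}\star\cdots\star(C^s_n)^{\star p_n}
\;=\;(-1)^n\frac{\{2n\}_q}{n}\,xC^s_{n-1}y,
\]
which is exactly the coefficient of $t^n$ in $-(q+q^{-1})^{-1}\log_\star\!\bigl(1+\sum_{k\ge 1}q^{-2k}(q+q^{-1})^{2k}C^s_k t^k\bigr)$.

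I would prove this identity by induction on $n$. The base case $n=1$ is immediate: the only partition is $p_1=1$, contributing $-C^s_1=-\{2\}_q xy$, which equals $-\{2\}_q xC^s_0 y$ because $C^s_0=1$. For the inductive step I would isolate the ``linear'' term ($p_n=1$, $N=1$), which produces $-C^s_n$, and rewrite it via the boundary case $(i,j)=(n-1,0)$ of (\ref{-q-1Ci+j+1}) as a shuffle of $xC^s_{n-1}$ with $y$, producing the leading $xC^s_{n-1}y$ with explicit correction terms. The remaining contributions, with $N\ge 2$ and involving only Catalan elements $C^s_k$ for $k<n$, are unfolded by iterating the recursions (\ref{q-1Cn1}), (\ref{xCn}), (\ref{Cny}) and (\ref{-q-1Ci+j+1}): every shuffle product $C^s_a\star C^s_b$ with $a+b\le n-1$ is rewritten as a combination of $(xC^s_{*})\star(C^s_{*}y)$ and its reverse, and then, via the induction hypothesis, as concatenations $xC^s_{k-1}y$.

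The principal obstacle is the combinatorial collapse in this inductive step. After all reductions, the left-hand side becomes a long linear combination of terms $xC^s_{k-1}y$ for $1\le k\le n$, and one must verify that every term with $k<n$ cancels and the $k=n$ contributions add up to exactly $(-1)^n\{2n\}_q/n\cdot xC^s_{n-1}y$. Conceptually this is a Newton--Girard identity, treating the Catalan elements $C^s_k$ as $q$-deformed power sums and $xC^s_{n-1}y$ as the corresponding complete symmetric element in the shuffle superalgebra; the characteristic factor $\{2n\}_q/n$ emerges from the logarithmic derivative of the Damiani generating series, in perfect analogy with Terwilliger's $[2n]_q/n$ factor in the non-super case. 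Carrying out the cancellation requires careful tracking of the multinomial coefficients $(N-1)!/\prod p_i!$ against the telescoping produced by repeated use of (\ref{-q-1Ci+j+1}), and this bookkeeping is where the main technical burden of the proof lies.
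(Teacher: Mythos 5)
Your reduction is sound: pushing the Schur expansion (\ref{B to D}) through $\varphi$ and inserting (\ref{im cor}) does convert Theorem \ref{Main thm 2} into the displayed $\log_\star$ identity, which is precisely the content of the paper's Corollary containing (\ref{exp}) (the paper derives that corollary \emph{from} the theorem; you propose to go the other way). This is a genuinely different route from the paper's. The paper never touches the Schur/exponential formula in its proof; instead it sets $\mathcal{C}^s_n:=$ the normalized image of $E^{\mathrm B}_{n\delta}$, uses the KLT commutation relations (\ref{com real and beck im 1})--(\ref{com real and beck im 2}) between Beck imaginary and real root vectors to show that $\mathcal{C}^s_n$ has the same $\star$-commutators with $x$ and $y$ as $xC^s_{n-1}y$ has (the latter computed via the purpose-built recursions (\ref{xCk+1}), (\ref{Ck+1y}), which in turn rest on Lemma \ref{xCky}, i.e.\ $xC^s_ky\in U$, proved through the orthogonality of $J$ and $U$), concludes that the difference is central in $U$, hence a scalar, hence zero by degree. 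None of that structural input appears in your argument.

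The problem is that your inductive step does not actually prove the reduced identity; it restates it. Every recursion you invoke --- (\ref{q-1Cn1}), (\ref{xCn}), (\ref{Cny}), (\ref{-q-1Ci+j+1}) --- expresses a concatenated Catalan element as a $\star$-combination of \emph{other} concatenated Catalan elements; none of them outputs the concatenation $xC^s_{n-1}y$. The only way $xC^s_{n-1}y$ arises in your scheme is as the top word of a shuffle such as $(xC^s_{n-1})\star y$, and the ``explicit correction terms'' are then all the other insertions of $y$ into $xC^s_{n-1}$ --- a linear combination of words that are not Catalan concatenations and that the induction hypothesis says nothing about. Showing that these cancel against the $N\ge 2$ contributions is exactly the assertion of the theorem, and your sketch offers no mechanism for it beyond ``careful bookkeeping.'' The Newton--Girard framing is the right heuristic (it is how one sees the factor $\{2n\}_q/n$), but to make it a proof you would need an independently established recursion of the form $nC^s_n=\sum_{k=1}^n(\pm)\{2k\}_q\,(xC^s_{k-1}y)\star C^s_{n-k}$, and that identity is not available a priori --- in the paper it is a consequence of Theorem \ref{Main thm 2}, not an ingredient. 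As written, the proposal leaves the essential step unproved.
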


\subsubsection{Two preliminary Lemmas}
Prior to commencing the proof of Theorem \ref{Main thm 2}, we require the following two key Lemmas.
\begin{lemma}\label{xCky}
	For $k \in \mathbb{N}$, we have $x C^{s}_k y \in U$.
\end{lemma}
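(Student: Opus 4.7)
My plan is to prove $xC^{s}_{k} y \in U$ by invoking Proposition \ref{J orth U}, which identifies $U$ as the orthogonal complement $J^{\perp}$ of the two-sided superideal $J$ with respect to the bilinear form $(\,,\,)$. Thus it is enough to verify that $(xC^{s}_{k} y, \eta) = 0$ for every $\eta \in J$. Since $J$ is generated as a two-sided superideal by $J^{+}$ and $J^{-}$, I need only check
\[
(xC^{s}_{k} y,\, w_1 J^{\pm} w_2) \;=\; 0
\]
for all words $w_1, w_2 \in \mathbb{V}$ with $|w_1| + |w_2| = 2k - 2$. A direct calculation shows that the antiautomorphism $\zeta$ of $\mathbb{V}$ fixes $xC^{s}_{k} y$ (using that $\zeta$ swaps $x, y$ and fixes $C^{s}_n$) and sends $J^{+}$ to $-J^{-}$. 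Since $\zeta$ bijects the word basis and preserves the bilinear form, the $J^{-}$ case reduces to the $J^{+}$ case, so it suffices to treat $J^{+}$.

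Let $c_w := (xC^{s}_{k} y, w)$ denote the coefficient of $w$ in the word-basis expansion
\[
xC^{s}_{k} y \;=\; \sum_{u \in \mathrm{Cat}_{k}} xuy \cdot C^{s}(u);
\]
thus $c_w = C^{s}(u)$ when $w = xuy$ with $u \in \mathrm{Cat}_{k}$, and $c_w = 0$ otherwise. Applying Lemma \ref{sep} iteratively to the tripartite factorization $w_1 \cdot \xi \cdot w_2$ at lengths $(|w_1|, 4, |w_2|)$, the inner product $(xC^{s}_{k} y, w_1 \xi w_2)$ equals $c_{w_1 \xi w_2}$. Expanding $J^{+} = xxxy + \{3\}_q\, xxyx - \{3\}_q\, xyxx - yxxx$, the orthogonality condition collapses to the four-term linear identity
\[
c_{w_1\, xxxy\, w_2} + \{3\}_q\, c_{w_1\, xxyx\, w_2} - \{3\}_q\, c_{w_1\, xyxx\, w_2} - c_{w_1\, yxxx\, w_2} \;=\; 0,
\]
required for every admissible pair $(w_1, w_2)$. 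I would prove this identity by induction on $k$, using the recursive formula (\ref{Cw}) for the Catalan coefficients $C^{s}(u)$. Several sub-cases trivialize automatically, because if $w_1$ does not begin with $x$, or $w_2$ does not end with $y$, or the insertion of the four-letter block $\xi$ at the prescribed position breaks the partial-balance condition needed for the interior to lie in $\mathrm{Cat}_{k}$, then the corresponding $c_w$ vanishes.

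The main obstacle is the generic case in which all four words $w_1 \xi w_2$ lie in the support of $xC^{s}_{k} y$. There the required cancellation is a delicate interplay between the $(-1)^i$ supersigns and the partial-balance weights $\{2 + 2(\bar{a}_1 + \cdots + \bar{a}_i)\}_q$ in the recursion (\ref{Cw}), applied precisely at the positions where the block $\xi$ perturbs the ambient Catalan structure. I expect the combinatorial verification to parallel Terwilliger's treatment of the non-super counterpart, with the additional supersign corrections dictated by (\ref{Cw}) coming from the parity of the total root system of $U_q(C(2)^{(2)})$.
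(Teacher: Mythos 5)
Your setup is sound: reducing the claim to $(xC^{s}_{k}y,\,w_1 J^{\pm} w_2)=0$ via Proposition \ref{J orth U} is exactly the paper's starting point, and your observation that $\zeta$ permutes the word basis (hence preserves the bilinear form), fixes $xC^{s}_{k}y$, and sends $J^{+}$ to $-J^{-}$ is a legitimate way to halve the work that the paper does not itself use. But the proposal stops short of a proof at precisely the point where the content lies. The four-term identity you isolate \emph{is} the whole difficulty, and ``I would prove this identity by induction on $k$'' followed by ``I expect the combinatorial verification to parallel Terwilliger's treatment'' is a statement of intent, not an argument; the paper's introduction stresses that the supersigns make the direct Terwilliger-style Catalan-profile computation unworkable here, which is exactly why $C^{s}(w)$ is defined recursively. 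As written, the generic case is an unproved expectation.

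You also miss the structural reduction that makes the lemma tractable. When both $w_1$ and $w_2$ are nontrivial, write $w_1=aw_1'$ and $w_2=w_2'b$; Lemma \ref{sep} factors the pairing as $(x,a)\,\bigl(C^{s}_{k},\,w_1'J^{\pm}w_2'\bigr)\,(y,b)$, and this vanishes because $C^{s}_{k}\in U$ is already available (it is, up to a scalar, $\varphi(E_{k\delta})$ by Theorem \ref{Main thm 1}) and $(U,J)=0$. Hence the only configurations requiring any combinatorics are those in which the block $J^{\pm}$ sits flush against the initial $x$ or the final $y$; there the paper exploits the leading structure $C^{s}_{k}=xxxR_1+(\{3\}_q\,xxy+xyx)(R_{2+}+R_{2-})$ together with the evaluations $(xxxy,J^{+})=1$, $(xxyx,J^{+})=\{3\}_q$, $(xxxx,J^{\pm})=0$ to exhibit the cancellation. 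Without either this reduction or an actual verification of your four-term identity in the boundary configurations, the proposal has a genuine gap.
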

\begin{proof}
		By Proposition \ref{J orth U}, it suffices to show that $x C^{s}_k y$ is orthogonal to any element in $J$, in other words, the following   equations are true.
		\begin{gather}
			\left( x C^{s}_k y, w_1 J^{+} w_2\right)=0, \quad\left( x C^{s}_k y, w_1 J^{-} w_2\right)=0 .
				\label{xcky orth}
		\end{gather}
	By Lemma \ref{sep}, if $x C^{s}_k y$ and $w_1 J^{\pm} w_2$ are in different homogeneous components, then equations (\ref{xcky orth}) are true. So we only need to consider the following  four cases.

	\textbf{Case 1:}  $w_1$ and $w_2$ are trivial. We have $k=1$ and $C^{s}_1=\{2\}_q x y$, so $\left( x x y y, J^{ \pm}\right)=0$.

	\textbf{Case 2:}  $w_1$ is trivial and $w_2$ is nontrivial, we have $k \geq 2$. By (\ref{Cn}) and (\ref{Cw}), we obtain
	\begin{gather*}
		C^{s}_k=x x x R_1+\left(\{3\}_q x x y+x y x\right)\left(R_{2+}+R_{2-}\right), \quad R_1, R_{2+}, R_{2-} \in \mathbb{V}_{2 k-3},
	\end{gather*}
	where $R_{2+}$ and $R_{2-}$ are corresponding words having positive coefficients and negative coefficients, respectively.
	By Definition \ref{J+J-}, we obtain
	$$
	\begin{array}{lll}
		\left( x x x x, J^{+}\right)=0, & \left( x x x y, J^{+}\right)=1, & \left( x x y x, J^{+}\right)=\{3\}_q ,\\
		\left( x x x x, J^{-}\right)=0, & \left( x x x y, J^{-}\right)=0, & \left( x x y x, J^{-}\right)=0 .
	\end{array}
	$$
	By this and Lemma \ref{sep},
	$$
	\begin{aligned}
		\left( x C^{s}_k y, w_1 J^{ \pm} w_2\right) & =\left( x C^{s}_k y, J^{ \pm} w_2\right) \\
		& =\left( x x x x R_1 y+\left(\{3\}_q x x x y+x x y x\right)\left(R_{2+}+R_{2-}\right) y, J^{ \pm} w_2\right) \\
		& =\left( x x x x R_1 y, J^{ \pm} w_2\right)+\left(\left(\{3\}_q x x x y+x x y x\right)\left(R_{2+}+R_{2-}\right) y, J^{ \pm} w_2\right) \\
		& =\left( x x x x, J^{ \pm}\right)\left( R_1 y, w_2\right)+\left(\{3\}_q x x x y+x x y x, J^{ \pm}\right)\left(\left(R_{2+}+R_{2-}\right) y, w_2\right) \\
		& =0.
	\end{aligned}
	$$

Key observation: $\left(\{3\}_q x x x y, J^{ \pm}\right)\left(\left(R_{2+}+R_{2-}\right) y, w_2\right)$ and $\left( x x y x, J^{ \pm}\right)\left(\left(R_{2+}+R_{2-}\right) y, w_2\right)$ must have opposite coefficients.

	For example, for $k=3$, by (\ref{Cn}) and rearranging each term, we get
	\begin{gather*}
		x C^{s}_3 y=\left(-\{4\}_q\{3\}_q^2\{2\}_q^2 x x x x y y y y\right)+\left(-\{3\}_q\{2\}_q^3 x x x y y x y y+\{2\}_q^3 x x y x y x y y\right)\\
		+\left(-\{3\}_q\{2\}_q^3 x x x y y x y y+\{2\}_q^3 x x y x y x y y\right)+\left(\{3\}_q^2\{2\}_q^3 x x x y x y y y-\{3\}_q\{2\}_q^3 x x y x x y y y\right),
	\end{gather*}
	then it is easy to check that $\left( x C^{s}_k y, J^{ \pm} w_2\right)=0$.

	For $k=4$, by (\ref{Cn}) and rearranging each term, we get
	\begin{gather*}
		x C^{s}_4 y=\left(\{2\}_q^3\{3\}_q^3\{4\}_q x x x x y y x y y y-\{2\}_q^2\{3\}_q^3\{4\}_q^2 x x x x y x y y y y\right)\\
		+\left(-\{2\}_q^3\{3\}_q^2\{4\}_q x x x x y y y x y y+\{2\}_q^2\{3\}_q^2\{4\}_q^2\{5\} x x x x x y y y y y\right)\\
		{\text{ + }}\left( {\{ 2\} _q^4xxyxyxyxyy - \{ 2\} _q^4{{\{ 3\} }_q}xxxyyxyxyy} \right) \\+ \left( { - \{ 2\} _q^4{{\{ 3\} }_q}xxyxyxxyyy + \{ 2\} _q^4\{ 3\} _q^2xxxyyxxyyy} \right)\\
		+ \left( {\{ 2\} _q^4\{ 3\} _q^2xxyxxyxyyy - \{ 2\} _q^4\{ 3\} _q^3xxxyxyxyyy} \right) \\+ \left( { - \{ 2\} _q^4{{\{ 3\} }_q}xxyxxyyxyy + \{ 2\} _q^4\{ 3\} _q^2xxxyxyyxyy} \right)\\
		\left( { - \{ 2\} _q^3\{ 3\} _q^2{{\{ 4\} }_q}xxyxxxyyyy + \{ 2\} _q^3\{ 3\} _q^3{{\{ 4\} }_q}xxxyxxyyyy} \right).
	\end{gather*}

\textbf{Case 3:}  Next assume that $w_1$ is nontrivial and $w_2$ is trivial. It is parallel to the Case 2.

\textbf{Case 4:} Next assume that each of $w_1,w_2 $ is nontrivial. There exist letters $a, b$ and words $w_1^{\prime}, w_2^{\prime}$ such that $w_1=a w_1^{\prime}$ and $w_2{ }= w_2^{\prime} b$. We have $C^{s}_k \in U$ and $w_1^{\prime} J^{ \pm} w_2^{\prime} \in J$ and $( U, J)=0$, so
\begin{gather*}
	\left( C^{s}_k, w_1^{\prime} J^{ \pm} w_2^{\prime}\right)=0.
\end{gather*}
By this and Lemma \ref{sep}, we have
\begin{gather*}
	\left( x C^{s}_k y, w_1 J^{ \pm} w_2\right)=\left( x C^{s}_k y, a w_1^{\prime} J^{ \pm} w_2^{\prime} b\right)=( x, a)\left( C^{s}_k, w_1^{\prime} J^{ \pm} w_2^{\prime}\right)( y, b)=0.
\end{gather*}

This completes the proof.
\end{proof}
\begin{lemma}
	For $k \in \mathbb{N}$, we have
	\begin{gather}
\label{xCk+1}
		x C^{s}_{k+1}=\frac{x \star\left(x C^{s}_k y\right)-\left(x C^{s}_k y\right) \star x}{q+q^{-1}},\\
\label{Ck+1y}
		C^{s}_{k+1} y=\frac{\left(x C^{s}_k y\right) \star y-y \star\left(x C^{s}_k y\right)}{q+q^{-1}} .
	\end{gather}
\end{lemma}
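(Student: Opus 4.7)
The first identity (\ref{xCk+1}) is the crux; the second identity (\ref{Ck+1y}) then follows from it by applying the antiautomorphism $\zeta$ of $\mathbb{V}$.

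My approach to (\ref{xCk+1}) is a direct expansion. For each Catalan word $v'\in\mathrm{Cat}_k$ appearing in $C^s_k$, the corresponding summand $v:=xv'y=v_1v_2\cdots v_{2k+2}$ (with $v_1=x$ and $v_{2k+2}=y$) is balanced, so $\sum_{j=1}^{2k+2}\langle v_j,x\rangle=0$. I apply (\ref{shuffle long 1}) to expand $x\star v$ and (\ref{shuffle long 2}) to expand $v\star x$. Because $2k+2$ is even, the sign factors $(-1)^j$ and $(-1)^{(2k+2)-j}$ coincide, and the balance of $v$ forces the two $q$-exponents to be negatives of each other. Writing $s_j:=2(\bar v_1+\cdots+\bar v_j)$ and subtracting yields
\begin{equation*}
x\star v-v\star x=\sum_{j=0}^{2k+2}(-1)^j\bigl(q^{s_j}-q^{-s_j}\bigr)\,v_1\cdots v_j\,x\,v_{j+1}\cdots v_{2k+2}.
\end{equation*}
Since $s_j$ is even, $q^{s_j}-q^{-s_j}=-(q+q^{-1})\{s_j\}_q$; in particular the $j=0$ and $j=2k+2$ terms drop out automatically because $s_j=0$ there and $\{0\}_q=0$.

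For the remaining $1\le j\le 2k+1$, I set $i:=j-1$ and $a_\ell:=v_{\ell+1}$. Because $v_1=x$, the inserted word factors as $x\cdot w$, where $w=a_1\cdots a_i\,x\,a_{i+1}\cdots a_{2k}\,y$ is precisely the Catalan word of length $2k+2$ obtained from $v'=a_1\cdots a_{2k}$ by inserting an $x$ at position $i$ and appending $y$ --- exactly the construction appearing in the recursive definition (\ref{Cw}). Moreover $s_j=2+2(\bar a_1+\cdots+\bar a_i)$. Summing over $v'\in\mathrm{Cat}_k$ weighted by $C^s(v')$ and then over insertion positions $i$ therefore reproduces (\ref{Cw}) for each $w'\in\mathrm{Cat}_{k+1}$, giving
\begin{equation*}
x\star(xC^s_k y)-(xC^s_k y)\star x=(q+q^{-1})\,xC^s_{k+1}.
\end{equation*}
For (\ref{Ck+1y}), I apply $\zeta$ (which swaps $x\leftrightarrow y$, fixes each $C^s_n$ and the element $xC^s_k y$, sends $xC^s_{k+1}$ to $C^s_{k+1}y$, and is an antiautomorphism of $\star$, i.e.\ $\zeta(a\star b)=\zeta(b)\star\zeta(a)$) to both sides of (\ref{xCk+1}), exactly as in the proofs of the earlier propositions for $xC^s_n$ and $C^s_n y$.

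The main obstacle is the combinatorial bookkeeping in the reindexing: confirming that the shift $i=j-1$ matches the insertion position used in (\ref{Cw}), that the sign $(-1)^j$ converts correctly to $-(-1)^i$, that every $w'\in\mathrm{Cat}_{k+1}$ is produced (possibly via several $(v',i)$, all of which are summed in (\ref{Cw})) and that no extraneous non-Catalan words survive (this is guaranteed by $\{0\}_q=0$ at the extremal indices). Once these alignments are verified, the identity drops out, and the $\zeta$-step for (\ref{Ck+1y}) is immediate.
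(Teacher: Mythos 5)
Your proposal is correct, but it takes a genuinely different route from the paper. The paper's proof is a two-line reduction: it specializes the previously established identity (\ref{-q-1Ci+j+1}) at $i=0$, $j=k$ to get $x C^{s}_{k+1}=-x\,\frac{q^2\, x \star (C^{s}_k y)+(C^{s}_k y)\star x}{q+q^{-1}}$, and then uses the recursion (\ref{shuffle long 3}) to peel the leading letter $x$ off both $x\star(xC^{s}_ky)$ and $(xC^{s}_ky)\star x$, so that the two expressions visibly agree. Note that (\ref{-q-1Ci+j+1}) is itself a consequence of Theorem \ref{Main thm 1} and the relation (\ref{com real root}) in $U_q^{+}(C(2)^{(2)})$, so the paper is trading on work already done. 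You instead expand $x\star v-v\star x$ for the balanced word $v=xv'y$ directly from (\ref{shuffle long 1})--(\ref{shuffle long 2}), exploit the cancellation of the parity signs and the antisymmetry of the $q$-exponents, and match the surviving terms against the recursive definition (\ref{Cw}) of $C^{s}(w)$ --- exactly the strategy of the paper's own Lemma \ref{bala}, applied to a different bracket. I checked your bookkeeping: the exponents are indeed $\pm s_j$ with $s_j$ even so $q^{s_j}-q^{-s_j}=-(q+q^{-1})\{s_j\}_q$, the boundary terms $j=0$ and $j=2k+2$ vanish because $s_j=0$ there, the shift $i=j-1$ produces $s_j=2+2(\bar a_1+\cdots+\bar a_i)$ and the sign $-(-1)^i$, and collecting the double sum over $(v',i)$ by the resulting word reproduces (\ref{Cw}) term for term (non-Catalan insertions never occur, since inserting $x$ into a Catalan word and appending $y$ stays Catalan). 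Your derivation of (\ref{Ck+1y}) by applying $\zeta$ is consistent with how the paper uses $\zeta$ elsewhere. The upshot: your argument is longer but self-contained and purely combinatorial --- in particular it does not rely on Theorem \ref{Main thm 1} or on any relation imported from $U_q^{+}(C(2)^{(2)})$ --- whereas the paper's argument is shorter at the cost of depending on that earlier machinery.
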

\begin{proof}
We will only verify equation (\ref{xCk+1}); the verification of equation (\ref{Ck+1y}) follows a similar procedure. Consider the left-hand side of (\ref{xCk+1}). Setting $i=0$ and $j=k$ in (\ref{-q-1Ci+j+1}), we obtain
\begin{gather}
x C^{s}_{k+1}=-x \frac{q^2 x \star\left(C^{s}_k y\right)+\left(C^{s}_k y\right) \star x}{q+q^{-1}} .
\label{xCk+12}
\end{gather}
Now consider the right-hand side of (\ref{xCk+1}). Using (\ref{shuffle long 3}), we obtain
\begin{equation*}
\begin{split}
x \star\left(x C^{s}_k y\right)&=x\bigl(x C^{s}_k y-
q^2 x \star(C^{s}_k y)\bigr), \\
\left(x C^{s}_k y\right) \star x&=x\bigl((C^{s}_k y) \star x+x C^{s}_k y\bigr),
\end{split}
\end{equation*}
then we arrive at equation (\ref{xCk+1}).
\end{proof}

\subsubsection{The proof of Theorem \ref{Main thm 2}}
Up to this point, we have successfully proven two preliminary Lemmas. Now, we are ready to move forward and prove Theorem \ref{Main thm 2}.

Define $\mathcal{C}^{s}_n \in U$ such that the map $\varphi$ sends the $PBW$ generators of Beck type to the Catalan elements as follows:
\begin{gather}
E_{n \delta}^{\text {B }} \mapsto(-1)^n \frac{\{2 n\}_q}{n} q^{-2 n}\left(q+q^{-1}\right)^{2 n-1} \mathcal{C}^{s}_n.
\label{Beck im cor}
\end{gather}
We will show that $\mathcal{C}^{s}_n=x C^{s}_{n-1} y$.

\textbf{Step1:} We will prove $\mathcal{C}^{s}_n-x C^{s}_{n-1} y$ is contained in the center of $U$.

By (\ref{com real and beck im 1}) (\ref{com real and beck im 2}) and using (\ref{real cor}) (\ref{Beck im cor}), we get
\begin{gather}
x C^{s}_{k+\ell}=\frac{\left(x C^{s}_{\ell}\right) \star \mathcal{C}^{s}_k-\mathcal{C}^{s}_k \star\left(x C^{s}_{\ell}\right)}{q+q^{-1}}, \quad C^{s}_{k+\ell} y=\frac{\mathcal{C}^{s}_k \star\left(C^{s}_{\ell} y\right)-\left(C^{s}_{\ell} y\right) \star \mathcal{C}^{s}_k}{q+q^{-1}} .
\label{xCk+l+Ck+ly}
\end{gather}
Setting $\ell=0$ and $k=n$ in (\ref{xCk+l+Ck+ly}), we obtain
\begin{gather}
x C^{s}_n=\frac{x \star \mathcal{C}^{s}_n-\mathcal{C}^{s}_n \star x}{q+q^{-1}}, \quad C^{s}_n y=\frac{\mathcal{C}^{s}_n \star y-y \star \mathcal{C}^{s}_n}{q+q^{-1}}.
\label{xCn+Cny}
\end{gather}
Setting $k=n-1$ in (\ref{xCk+1}) and (\ref{Ck+1y}), we obtain
\begin{gather}
x C^{s}_n=\frac{x \star\left(x C^{s}_{n-1} y\right)-\left(x C^{s}_{n-1} y\right) \star x}{q+q^{-1}}, \quad C^{s}_n y=\frac{\left(x C^{s}_{n-1} y\right) \star y-y \star\left(x C^{s}_{n-1} y\right)}{q+q^{-1}} .
\label{xCn+Cny2}
\end{gather}
So we see that $\mathcal{C}^{s}_n-x C^{s}_{n-1} y$ commutes with $x$ and $y$ with respect to $\star$.  Thereby, $\mathcal{C}^{s}_n-x C^{s}_{n-1} y$ is contained in the center of $U$.

\textbf{Step 2:} We will prove $\mathcal{C}^{s}_n=x C^{s}_{n-1} y$.

By Step 1, there exists $\alpha_n \in \mathbb{F}$ such that $\mathcal{C}^{s}_n-x C^{s}_{n-1} y=\alpha_n 1$. Comparing the degree on both sides, we have $x C^{s}_{n-1} y \in \mathbb{V}_{2 n}$. However, $\alpha_n 1 \in \mathbb{V}_0$ and $\mathbb{V}_0 \cap \mathbb{V}_{2 n}=0$, since $n \geq 1$, so $\alpha_n 1=0$. Therefore, $\alpha_n=0$. We have shown that $\mathcal{C}^{s}_n=x C^{s}_{n-1} y$.
\begin{coro}
	The following   holds in the $q$-shuffle superalgebra $\mathbb{V}$
\begin{gather}
	\exp \left(\sum_{k=1}^{\infty} -\frac{\{2 k\}_q}{k} x C^{s}_{k-1} y (-t)^k\right)=1+\sum_{k=1}^{\infty} C^{s}_k t^k.
	\label{exp}
\end{gather}
\end{coro}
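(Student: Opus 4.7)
The plan is to derive this identity by pushing the defining exponential relation (\ref{Beck}) for the Beck generators through the superalgebra homomorphism $\varphi:U_q^{+}(C(2)^{(2)})\to\mathbb{V}$, and then invoking the closed-form expressions supplied by Theorems \ref{Main thm 1} and \ref{Main thm 2}.

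First I would check that the relevant formal-series manipulations are legitimate. Since $\{E_{k\delta}^{\mathrm{B}}\}_{k\geq 1}$ mutually commute by Lemma \ref{com rel real and im}, relation (\ref{Beck}) is a well-defined identity in $U_q^{+}(C(2)^{(2)})[[t]]$. The homomorphism $\varphi$ extends $t$-linearly to $\mathbb{V}[[t]]$; Theorem \ref{Main thm 2} shows that it sends the commuting family $\{E_{k\delta}^{\mathrm{B}}\}$ to scalar multiples of the family $\{xC^{s}_{k-1}y\}$, which is therefore $\star$-commuting, so the exponential on the left-hand side of (\ref{Beck}) is mapped term-by-term to a bona fide formal exponential in $\mathbb{V}[[t]]$.

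Next I would substitute the explicit formulas. Using Theorems \ref{Main thm 1} and \ref{Main thm 2}, a short computation gives
\begin{gather*}
-(q+q^{-1})\varphi(E_{k\delta})\,t^{k}=C^{s}_{k}\bigl(q^{-2}(q+q^{-1})^{2}t\bigr)^{k},\\
-(q+q^{-1})\varphi(E_{k\delta}^{\mathrm{B}})\,t^{k}=-\tfrac{\{2k\}_{q}}{k}\,xC^{s}_{k-1}y\,\bigl(-q^{-2}(q+q^{-1})^{2}t\bigr)^{k}.
\end{gather*}
Introducing the rescaled indeterminate $s:=q^{-2}(q+q^{-1})^{2}t$ brings both sides of the image of (\ref{Beck}) under $\varphi$ into exactly the form stated in (\ref{exp}), and renaming $s$ back to $t$ (valid since both sides are formal power series and $s\mapsto t$ is merely a relabeling of the formal indeterminate) yields the claimed corollary.

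The main obstacle is purely bookkeeping: carefully collecting the sign $(-1)^{k}$ from $\varphi(E_{k\delta}^{\mathrm{B}})$ together with the sign appearing in $\varphi(E_{k\delta})$, and tracking the scalar $q^{-2k}(q+q^{-1})^{2k}$ on each side, so as to verify that the common rescaling $t\mapsto q^{-2}(q+q^{-1})^{2}t$ absorbs these factors simultaneously on the exponential side and on the Damiani side. Once these powers are correctly collected, the corollary is an immediate consequence of the two main theorems already established.
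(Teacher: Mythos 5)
Your proposal is correct and follows exactly the paper's route: the paper's proof is the one-line instruction to apply $\varphi$ to both sides of (\ref{Beck}) and evaluate using (\ref{im cor}) and (\ref{Beck cor}), which is precisely what you do, with the sign/scalar bookkeeping and the harmless rescaling $t\mapsto q^{-2}(q+q^{-1})^{2}t$ made explicit. Your added remarks on the $\star$-commutativity of the family $\{xC^{s}_{k-1}y\}$ (ensuring the formal exponential is well defined in $\mathbb{V}[[t]]$) are a sound, if implicit in the paper, justification.
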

It is important to emphasize that in equation (\ref{exp}), the exponential function is defined with respect to the $q$-shuffle super-product. Moreover, the notation $x C^{s}_{k-1} y$ denotes the free product.
\begin{proof}
Apply the map $\varphi $ to each side of (\ref{Beck}), and evaluate the result using (\ref{im cor}), (\ref{Beck cor}).
\end{proof}
Formula (\ref{exp}) clearly demonstrates that the elements within the set $\left\{x C^{s}_n y\right\}_{n=0}^{\infty}$  and those in the set $\left\{C^{s}_n\right\}_{n=1}^{\infty}$ can be derived from one another in a recursive manner.
\begin{coro}
		If we define a linear order
		\begin{gather*}
			x<x C^{s}_1<x C^{s}_2<\cdots<x y<x C^{s}_1 y<x C^{s}_2 y<\cdots<C^{s}_2 y<C^{s}_1 y<y,
		\end{gather*}
	then a $PBW$ basis of $U_q^{+}(C(2)^{(2)})$ is obtained by the elements
\begin{gather*}
	\left\{x C^{s}_n\right\}_{n \in \mathbb{N}}, \quad\left\{C^{s}_n y\right\}_{n \in \mathbb{N}}, \quad\left\{x C^{s}_n y\right\}_{n \in \mathbb{N}}.
\end{gather*}
\end{coro}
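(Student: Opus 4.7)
The plan is to transport the Beck-type KLT $PBW$ basis of Proposition \ref{KLT} across the Rosso embedding $\varphi \colon U_q^{+}(C(2)^{(2)}) \hookrightarrow \mathbb{V}$ and to identify the images of the three families of $PBW$ generators via Theorems \ref{Main thm 1} and \ref{Main thm 2}. Specifically, Theorem \ref{Main thm 1} yields
$$\varphi(E_{n\delta+\alpha_0}) = q^{-2n}(q+q^{-1})^{2n}\, xC^{s}_n, \qquad \varphi(E_{n\delta+\alpha_1}) = q^{-2n}(q+q^{-1})^{2n}\, C^{s}_n y,$$
while Theorem \ref{Main thm 2} yields
$$\varphi(E^{B}_{n\delta}) = (-1)^n\,\tfrac{\{2n\}_q}{n}\,q^{-2n}(q+q^{-1})^{2n-1}\, xC^{s}_{n-1}y.$$
Each scalar prefactor is a nonzero element of $\mathbb{K}=\mathbb{Q}(q)$, since $\{2n\}_q=(q^{-2n}-q^{2n})/(q+q^{-1})$ is nonzero for all $n\geq 1$. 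Rescaling the KLT Beck generators by these units therefore does not affect the $PBW$ property.

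Next I observe that, after this rescaling and the reindexing $n \leftrightarrow n-1$ on the imaginary-root family, the three families of images become exactly $\{xC^{s}_n\}_{n\in\mathbb{N}}$, $\{xC^{s}_n y\}_{n\in\mathbb{N}}$ and $\{C^{s}_n y\}_{n\in\mathbb{N}}$. Moreover the Beck linear order in Proposition \ref{KLT}, namely
$$E_{\alpha_0}<E_{\delta+\alpha_0}<\cdots<E^{B}_\delta<E^{B}_{2\delta}<\cdots<E_{2\delta+\alpha_1}<E_{\delta+\alpha_1}<E_{\alpha_1},$$
translates verbatim to the linear order
$$x<xC^{s}_1<xC^{s}_2<\cdots<xy<xC^{s}_1 y<xC^{s}_2 y<\cdots<C^{s}_2 y<C^{s}_1 y<y$$
prescribed in the statement, using the boundary identifications $xC^{s}_0=x$, $C^{s}_0 y=y$ and $xC^{s}_0 y = xy$ (this last one matches $\varphi(E^{B}_\delta)=\varphi(E_\delta)$ up to the usual unit).

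Because $\varphi$ is an injective superalgebra homomorphism, ordered $\star$-monomials in the image generators are linearly independent in $\mathbb{V}$ if and only if the corresponding ordered products of the rescaled Beck generators are linearly independent in $U_q^{+}(C(2)^{(2)})$; the latter holds by Proposition \ref{KLT}. Hence the $\varphi$-preimages form a $PBW$ basis of $U_q^{+}(C(2)^{(2)})$, and by injectivity of $\varphi$ this same conclusion passes directly to the claimed Catalan-type generators with the stipulated linear order.

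The main obstacle is not conceptual but bookkeeping: I must (i) verify that the scalars $(-1)^n\{2n\}_q/n$ are units in $\mathbb{Q}(q)$ for every $n\geq 1$, (ii) carefully track the index shift $xC^{s}_{n-1}y \leftrightarrow xC^{s}_n y$ in passing between the ranges $n\geq 1$ and $n\in\mathbb{N}$, and (iii) confirm that the block ordering $\{E_{n\delta+\alpha_0}\}<\{E^{B}_{n\delta}\}<\{E_{n\delta+\alpha_1}\}$ in Proposition \ref{KLT} is carried by $\varphi$ precisely to the block ordering $\{xC^{s}_n\}<\{xC^{s}_n y\}<\{C^{s}_n y\}$ announced here. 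None of these steps requires any new computation beyond the results already established in the excerpt.
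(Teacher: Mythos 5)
Your proposal is correct and is essentially the argument the paper intends (the corollary is stated without proof, but it follows exactly as you describe): transport the Beck-type KLT $PBW$ basis of Proposition \ref{KLT} through the injective homomorphism $\varphi$, identify the images via Theorems \ref{Main thm 1} and \ref{Main thm 2}, discard the nonzero scalars, and match the linear orders using $xC^{s}_0=x$, $C^{s}_0y=y$, $xC^{s}_0y=xy$. This mirrors how the paper obtains Corollary \ref{excessive} from the Damiani-type basis, so no further comment is needed.
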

\begin{coro}
	The elements $\left\{x C^{s}_n y\right\}_{n \in \mathbb{N}}$ mutually commute with respect to the q-shuffle superproduct.
	\end{coro}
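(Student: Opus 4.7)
The plan is to transport the commutation relation from $U_q^+(C(2)^{(2)})$ down to the $q$-shuffle superalgebra $\mathbb{V}$ via the Rosso embedding $\varphi$, which has been established to be an injective superalgebra homomorphism. In other words, rather than attempting a direct shuffle-level computation, I would exploit the isomorphism between the two sides already supplied by Theorem \ref{Main thm 2}.

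First I would invoke Lemma \ref{com rel real and im}, which asserts that the KLT Beck-type generators $\{E_{k\delta}^{\mathrm{B}}\}_{k=1}^{\infty}$ mutually commute in $U_q^+(C(2)^{(2)})$. Since each imaginary root $k\delta = k\alpha + k(\delta-\alpha)$ is even (the parity of $\alpha$ and $\delta-\alpha$ adds to $0 \bmod 2$), the super-commutator coincides with the ordinary commutator on these generators, so the relations
$$E_{i\delta}^{\mathrm{B}} E_{j\delta}^{\mathrm{B}} = E_{j\delta}^{\mathrm{B}} E_{i\delta}^{\mathrm{B}}, \qquad i,j \geq 1,$$
hold in the associative sense.

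Next I would apply $\varphi$ to both sides of these relations. Because $\varphi$ is an algebra homomorphism from $(U_q^+,\cdot)$ to $(\mathbb{V},\star)$, it converts the product on the left to the shuffle superproduct on the right, yielding $\varphi(E_{i\delta}^{\mathrm{B}}) \star \varphi(E_{j\delta}^{\mathrm{B}}) = \varphi(E_{j\delta}^{\mathrm{B}}) \star \varphi(E_{i\delta}^{\mathrm{B}})$. By Theorem \ref{Main thm 2}, each image $\varphi(E_{k\delta}^{\mathrm{B}})$ equals a nonzero scalar multiple of $xC^{s}_{k-1}y$. Cancelling the nonzero scalar factors gives
$$xC^{s}_{i-1}y \star xC^{s}_{j-1}y = xC^{s}_{j-1}y \star xC^{s}_{i-1}y, \qquad i,j \geq 1,$$
and reindexing $n = i-1$, $m = j-1$ produces exactly the claim for all $n,m \in \mathbb{N}$.

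I do not anticipate a real obstacle here, since this is essentially an exercise in pulling back a known commutativity statement through an established embedding. The only subtlety to verify is the parity bookkeeping, namely that the shuffle relation $\varphi(E_{i\delta}^{\mathrm{B}}) \star \varphi(E_{j\delta}^{\mathrm{B}}) = \varphi(E_{j\delta}^{\mathrm{B}}) \star \varphi(E_{i\delta}^{\mathrm{B}})$ is unsigned: this is automatic because each $xC^{s}_{k-1}y$ lies in $\mathbb{V}_{2k}$, hence is even, so no sign twist appears when exchanging factors. Alternatively, one could deduce the same conclusion from the exponential identity (\ref{exp}) by noting that the $q$-shuffle exponential of a family of mutually commuting elements $\{C^{s}_k\}_{k\geq 1}$ (guaranteed by (\ref{CiCj})) forces its logarithmic summands $\{xC^{s}_{k-1}y\}_{k\geq 1}$ to mutually commute as well, which furnishes a second, purely shuffle-theoretic justification.
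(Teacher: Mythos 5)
Your proposal is correct and follows essentially the same route as the paper: the paper's proof likewise pulls back the mutual commutativity of the Beck-type imaginary root vectors (the first assertion of Lemma \ref{com rel real and im}) through $\varphi$, using the identification of $\varphi(E_{n\delta}^{\mathrm{B}})$ with a nonzero scalar multiple of $xC^{s}_{n-1}y$ in the generating-function form (\ref{exp}). Your closing remark about the parity bookkeeping (each $xC^{s}_{k-1}y$ is even, so the supercommutator is the ordinary commutator) is a worthwhile detail the paper leaves implicit.
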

\begin{proof}
	Use the first assertion in Lemma \ref{com rel real and im}, together with (\ref{exp}).
\end{proof}
\subsection{Some relations involving the Catalan elements coming from Theorems \ref{Main thm 1} \& \ref{Main thm 2}}

In subsections 3.3 and 3.4, we have completed the proofs of Theorems \ref{Main thm 1} and \ref{Main thm 2}, thereby establishing the correspondence between quantum root vectors and Catalan words. To obtain more intricate commutator relations among the sets $\left\{x C_n^s\right\}_{n=0}^{\infty},\left\{C_n^s y\right\}_{n=0}^{\infty}$, and $\left\{C_n^s\right\}_{n=1}^{\infty}$, we require certain relations within the superalgebra $U_q^{+}(C(2)^{(2)})$.

By Theorems \ref{Main thm 1} and \ref{Main thm 2}, together with Lemmas \ref{R I}, \ref{R R}, \ref{R R 2} \&  \ref{R R 3}, we get
\begin{lemma} \label{R I}
For $i \geq 1$ and $j \geq 0$, the following   relations hold in $U_q^{+}(C(2)^{(2)})$, namely,
\begin{equation}
\begin{aligned}
	E_{i \delta} E_{j \delta+\alpha_0}=\, & E_{j \delta+\alpha_0} E_{i \delta}+(-1)^i q^{2-2 i}\left(q-q^{-1}\right) E_{(i+j) \delta+\alpha_0} \\
	& -q^2\left(q^2-q^{-2}\right) \sum_{\ell=1}^{i-1}(-1)^{\ell} q^{-2 \ell} E_{(j+\ell) \delta+\alpha_0} E_{(i-\ell) \delta},\\
\end{aligned}
\label{com rel real and im 1}
\end{equation}
\begin{equation}
\begin{aligned}
	E_{j \delta+\alpha_1} E_{i \delta}=\, & E_{i \delta} E_{j \delta+\alpha_1}+(-1)^i q^{2-2 i}\left(q-q^{-1}\right) E_{(i+j) \delta+\alpha_1} \\
	& -q^2\left(q^2-q^{-2}\right) \sum_{\ell=1}^{i-1}(-1)^{\ell} q^{-2 \ell} E_{(i-\ell) \delta} E_{(j+\ell) \delta+\alpha_1} .
\end{aligned}
\label{com rel real and im 2}
\end{equation}
\end{lemma}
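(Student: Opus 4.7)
The plan is to establish (\ref{com rel real and im 1}) by a generating-function argument that uses the Beck-type imaginary root vectors $\{E_{k\delta}^{\mathrm{B}}\}_{k\geq 1}$ as an intermediary, and then deduce (\ref{com rel real and im 2}) by applying the antiautomorphism $\zeta$ of Lemma \ref{anti} (which fixes each $E_{n\delta}$ and swaps $E_{n\delta+\alpha_0} \leftrightarrow E_{n\delta+\alpha_1}$). The crucial input is Lemma \ref{com rel real and im}: the $E_{k\delta}^{\mathrm{B}}$ pairwise commute, and $\mathrm{ad}_{E_{k\delta}^{\mathrm{B}}}$ carries $E_{\ell\delta+\alpha_0}$ to a scalar multiple of $E_{(k+\ell)\delta+\alpha_0}$. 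Consequently iterated brackets with the $E_{k\delta}^{\mathrm{B}}$ preserve the span of $\{E_{m\delta+\alpha_0}\}_{m\geq j}$ and can be tracked by an auxiliary shift variable.

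Concretely, I would set $\Psi(t) = \sum_{k\geq 1}E_{k\delta}^{\mathrm{B}}t^k$ so that (\ref{Beck}) reads $\exp(-(q+q^{-1})\Psi(t)) = 1-(q+q^{-1})\sum_{k\geq 1}E_{k\delta}t^k$. Since the $E_{k\delta}^{\mathrm{B}}$ commute, Hadamard's lemma gives
\begin{gather*}
\exp(-(q+q^{-1})\Psi(t))\,E_{j\delta+\alpha_0}\,\exp((q+q^{-1})\Psi(t)) = \exp\!\left(-(q+q^{-1})\mathrm{ad}_{\Psi(t)}\right)(E_{j\delta+\alpha_0}).
\end{gather*}
Introducing a bookkeeping variable $u$ with $u^m \leftrightarrow E_{(j+m)\delta+\alpha_0}$ and applying Lemma \ref{com rel real and im}, the operator $-(q+q^{-1})\mathrm{ad}_{\Psi(t)}$ acts as multiplication by
\begin{gather*}
\sum_{k\geq 1}(-1)^{k-1}\frac{q^{2k}-q^{-2k}}{k}(tu)^k \;=\; \log\!\left(\frac{1+q^2 tu}{1+q^{-2}tu}\right),
\end{gather*}
hence its exponential acts as multiplication by $(1+q^2 tu)/(1+q^{-2}tu)$. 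Expanding this rational function as a geometric series in $tu$ and translating back via $u^m \leftrightarrow E_{(j+m)\delta+\alpha_0}$ yields
\begin{gather*}
\exp\!\left(-(q+q^{-1})\mathrm{ad}_{\Psi(t)}\right)(E_{j\delta+\alpha_0}) = E_{j\delta+\alpha_0} + (q^2-q^{-2})\sum_{m\geq 1}(-q^{-2})^{m-1}t^m E_{(j+m)\delta+\alpha_0}.
\end{gather*}

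Multiplying the conjugation identity on the right by $\exp(-(q+q^{-1})\Psi(t))$ and comparing coefficients of $t^i$ on both sides produces (\ref{com rel real and im 1}), after the simplifications $(q^2-q^{-2})/(q+q^{-1}) = q-q^{-1}$ and $(-q^{-2})^{i-1} = (-1)^{i-1}q^{2-2i}$. Applying $\zeta$ to (\ref{com rel real and im 1}) (with $\zeta(E_{n\delta}) = E_{n\delta}$, $\zeta(E_{n\delta+\alpha_0}) = E_{n\delta+\alpha_1}$, and $\zeta$ antimultiplicative) immediately converts the first relation into (\ref{com rel real and im 2}), finishing the argument.

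The main technical obstacle is controlling the formal-power-series manipulation that identifies the exponential of the adjoint operator with the rational generating function. This reduces to checking (i) that each coefficient of $t^N$ in the relevant series is a finite sum, which follows from the weight grading on $U_q^+(C(2)^{(2)})$, and (ii) that the signs $(-1)^{k-1}$ coming from Lemma \ref{com rel real and im} combine correctly with the Taylor expansion of $\log$; once these are in place the remainder is straightforward bookkeeping.
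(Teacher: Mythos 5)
Your derivation is algebraically correct, but note first that the paper offers no proof of Lemma \ref{R I} at all: the relations are simply recorded (they originate in \cite{KLT}) and then pushed through the map $\varphi$ to obtain the corresponding Catalan-element corollary. So you are supplying an argument where the paper supplies none. The computation itself checks out: with $(\alpha,\alpha)=2$, the adjoint action of $-(q+q^{-1})\Psi(t)$, where $\Psi(t)=\sum_{k\ge 1}E_{k\delta}^{\mathrm B}t^k$, acts on $E_{j\delta+\alpha_0}$ in your bookkeeping as multiplication by $\sum_{k\ge 1}(-1)^{k-1}k^{-1}(q^{2k}-q^{-2k})(tu)^k=\log\bigl((1+q^2tu)/(1+q^{-2}tu)\bigr)$; its exponential expands as $1+(q^2-q^{-2})\sum_{m\ge 1}(-q^{-2})^{m-1}(tu)^m$; and extracting the coefficient of $t^i$ from the identity obtained by right-multiplying by $\exp(-(q+q^{-1})\Psi(t))$ and substituting (\ref{Beck}) reproduces (\ref{com rel real and im 1}) exactly, including the coefficients $(-1)^iq^{2-2i}(q-q^{-1})$ and $-q^2(q^2-q^{-2})(-1)^{\ell}q^{-2\ell}$. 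The passage to (\ref{com rel real and im 2}) via the ordinary antiautomorphism $\zeta$ is also sound: $\zeta$ sends the first cubic Serre relation to $-1$ times the second, so it is well defined, it fixes $E_\delta$ and hence each $E_{n\delta}$, and applying it termwise to (\ref{com rel real and im 1}) reverses every product and swaps the subscripts $\alpha_0\leftrightarrow\alpha_1$ as required.

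The one caveat is logical rather than computational. Your proof takes Lemma \ref{com rel real and im} as the primitive input, whereas in \cite{KLT} (as in the analogous $U_q(\widehat{\mathfrak{sl}_2})$ story of Damiani and Beck--Chari--Pressley) the Damiani-type relations of Lemma \ref{R I} are what one proves first, by induction on the defining recursions (\ref{real root})--(\ref{im root}), and the Beck-type commutators of Lemma \ref{com rel real and im} are then \emph{derived} from them by precisely the exponential manipulation you perform, run in the opposite direction. Your argument therefore establishes the equivalence of the two lemmas (given (\ref{Beck}) and the mutual commutativity of the imaginary root vectors) rather than an independent proof of either. This is acceptable within the paper's own ordering, which records Lemma \ref{com rel real and im} as a citation before Lemma \ref{R I}, but if the goal were a self-contained proof you would instead prove (\ref{com rel real and im 1}) directly by induction on $i$ starting from $E_{i\delta}=q^{-2}E_{(i-1)\delta+\alpha_1}A+AE_{(i-1)\delta+\alpha_1}$ and the real-root recursions, and you should say explicitly which of the two lemmas you are treating as the external input.
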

The corresponding Catalan version is the following
\begin{coro}
	For $i, j \in \mathbb{N}$, the following   relations hold in $\mathbb{V}$,
	\begin{gather*}
		\frac{\left(x C^{s}_i\right) \star C^{s}_j-C^{s}_j \star\left(x C^{s}_i\right)}{q^2-q^{-2}}=\sum_{\ell=1}^j (-1)^{\ell}q^{2-2 \ell}\left(x C^{s}_{i+\ell}\right) \star C^{s}_{j-\ell},\\
		\frac{C^{s}_j \star\left(C^{s}_i y\right)-\left(C^{s}_i y\right) \star C^{s}_j}{q^2-q^{-2}}=\sum_{\ell=1}^j(-1)^{\ell} q^{2-2 \ell} C^{s}_{j-\ell} \star\left(C^{s}_{i+\ell} y\right).
	\end{gather*}
\end{coro}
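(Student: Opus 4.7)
The strategy is to translate the relations of Lemma~\ref{R I} into the $q$-shuffle superalgebra $\mathbb{V}$ by applying the Rosso embedding $\varphi$ and substituting the closed-form expressions from Theorem~\ref{Main thm 1}. Since $\varphi$ is a superalgebra homomorphism, products in $U_q^{+}(C(2)^{(2)})$ map to $\star$-products in $\mathbb{V}$; since the roots $i\delta$ and $j\delta+\alpha_0$ are even, the super-commutators reduce to ordinary commutators and are preserved under $\varphi$.

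To obtain the first identity, I would start from (\ref{com rel real and im 1}), swap the roles of $i$ and $j$, and rearrange so that the commutator $E_{i\delta+\alpha_0}E_{j\delta}-E_{j\delta}E_{i\delta+\alpha_0}$ appears on the left. Applying $\varphi$ and substituting $\varphi(E_{n\delta+\alpha_0})=q^{-2n}(q+q^{-1})^{2n}\,xC^{s}_{n}$ together with $\varphi(E_{n\delta})=-q^{-2n}(q+q^{-1})^{2n-1}\,C^{s}_{n}$ for $n\ge 1$, every term acquires the common scalar $-q^{-2i-2j}(q+q^{-1})^{2i+2j-1}$. After dividing this scalar out and then dividing by $q^{2}-q^{-2}=(q-q^{-1})(q+q^{-1})$, the leading contribution $(-1)^{j}q^{2-2j}(q-q^{-1})E_{(i+j)\delta+\alpha_0}$ becomes precisely the $\ell=j$ summand $(-1)^{j}q^{2-2j}\,xC^{s}_{i+j}$ (using $C^{s}_{0}=1$), while the truncated sum $\sum_{\ell=1}^{j-1}$, after absorbing $q^{2}\cdot q^{-2\ell}=q^{2-2\ell}$, supplies the remaining terms of $\sum_{\ell=1}^{j}(-1)^{\ell}q^{2-2\ell}(xC^{s}_{i+\ell})\star C^{s}_{j-\ell}$.

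The second identity is most cleanly derived by applying the antiautomorphism $\zeta$ to the first. Indeed, on $\mathbb{V}$ the map $\zeta$ fixes each $C^{s}_{n}$ and swaps $x\leftrightarrow y$, hence $\zeta(xC^{s}_{i})=C^{s}_{i}y$. Because $\zeta$ reverses $\star$-products on the subalgebra $U$ (as $\varphi$ intertwines $\zeta$ on $U_q^{+}$ with $\zeta$ on $\mathbb{V}$ via the commuting diagram of Corollary~30, and $\zeta$ on $U_q^{+}$ is an antiautomorphism), one has $\zeta\bigl((xC^{s}_{i})\star C^{s}_{j}-C^{s}_{j}\star(xC^{s}_{i})\bigr)=C^{s}_{j}\star(C^{s}_{i}y)-(C^{s}_{i}y)\star C^{s}_{j}$, and similarly $\zeta\bigl((xC^{s}_{i+\ell})\star C^{s}_{j-\ell}\bigr)=C^{s}_{j-\ell}\star(C^{s}_{i+\ell}y)$. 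Applying $\zeta$ termwise to the first identity therefore yields the second verbatim. Alternatively, the same computation can be redone starting from (\ref{com rel real and im 2}).

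The only genuine work is the scalar bookkeeping: tracking the sign introduced by the minus in $\varphi(E_{n\delta})$, the factorizations of $q$- and $(q+q^{-1})$-powers, and the identity $(q-q^{-1})(q+q^{-1})=q^{2}-q^{-2}$ that converts the Damiani coefficient on the right-hand side of (\ref{com rel real and im 1}) into the denominator $q^{2}-q^{-2}$ on the Catalan side. Once the common prefactor $-q^{-2i-2j}(q+q^{-1})^{2i+2j-1}$ is verified to cancel cleanly, what remains is an identity in Catalan elements that matches the stated formula term by term, and no further argument is needed.
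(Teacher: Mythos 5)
Your proposal is correct and follows exactly the route the paper intends: the paper derives this corollary by applying $\varphi$ to Lemma~\ref{R I} (with the roles of $i$ and $j$ interchanged) and substituting the closed forms from Theorem~\ref{Main thm 1}, with the common prefactor $-q^{-2i-2j}(q+q^{-1})^{2i+2j-1}$ cancelling and the isolated term $(-1)^{j}q^{2-2j}(q-q^{-1})E_{(i+j)\delta+\alpha_0}$ absorbed as the $\ell=j$ summand via $C^{s}_0=1$, just as you describe; the second identity likewise follows either from $\zeta$ or from (\ref{com rel real and im 2}). The only slip is your parenthetical claim that $j\delta+\alpha_0$ is an even root (it is odd), but this is harmless since Lemma~\ref{R I} is stated with explicit products rather than super-commutators, so only the homomorphism property of $\varphi$ is used.
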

\begin{lemma} \label{R R}
	For $i>j \geq 0$, the following   relations hold in $U_q^{+}(C(2)^{(2)})$, namely,

	\text{\rm(i)} Assume that $i-j=2 r+1$ is odd, then
\begin{gather}
	E_{i \delta+\alpha_0} E_{j \delta+\alpha_0}=-q^{-2} E_{j \delta+\alpha_0} E_{i \delta+\alpha_0}+
	\left(q^2-q^{-2}\right) \sum_{\ell=1}^r(-1)^{\ell} q^{-2 \ell} E_{(j+\ell) \delta+\alpha_0} E_{(i-\ell) \delta+\alpha_0},
	\label{com rel real and real 1}
\end{gather}
\begin{gather}
	E_{j \delta+\alpha_1} E_{i \delta+\alpha_1}=-q^{-2} E_{i \delta+\alpha_1} E_{j \delta+\alpha_1}+\left(q^2-q^{-2}\right) \sum_{\ell=1}^r(-1)^{\ell} q^{-2 \ell} E_{(i-\ell) \delta+\alpha_1} E_{(j+\ell) \delta+\alpha_1}.
	\label{com rel real and real 2}
\end{gather}

	\text{\rm(ii)} Assume that $i-j=2 r$ is even, then
\begin{equation}
	\begin{aligned}
		E_{i \delta+\alpha_0} E_{j \delta+\alpha_0}=-q^{-2} E_{j \delta+\alpha_0} & E_{i \delta+\alpha_0}+(-1)^{r+1}q^{j-i+1}\left(q+q^{-1}\right) E_{(r+j) \delta+\alpha_0}^2 \\
		& +\left(q^2-q^{-2}\right) \sum_{\ell=1}^{r-1}(-1)^{\ell} q^{-2 \ell} E_{(j+\ell) \delta+\alpha_0} E_{(i-\ell) \delta+\alpha_0},
	\end{aligned}
\label{com rel real and real 3}
\end{equation}
\begin{equation}
	\begin{aligned}
		E_{j \delta+\alpha_1} E_{i \delta+\alpha_1}=-q^{-2} E_{i \delta+\alpha_1} & E_{j \delta+\alpha_1}+(-1)^{r+1}q^{j-i+1}\left(q+q^{-1}\right) E_{(r+j) \delta+\alpha_1}^2 \\
		& +\left(q^2-q^{-2}\right) \sum_{\ell=1}^{r-1}(-1)^{\ell} q^{-2 \ell} E_{(i-\ell) \delta+\alpha_1} E_{(j+\ell) \delta+\alpha_1} .
	\end{aligned}
\label{com rel real and real 4}
\end{equation}
\end{lemma}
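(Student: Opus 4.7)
The plan is to establish (i) and (iii) first, and then deduce (ii) and (iv) via the antiautomorphism $\zeta$ of Lemma \ref{anti}. Since $\zeta$ swaps $E_{n\delta+\alpha_0} \leftrightarrow E_{n\delta+\alpha_1}$, fixes every $E_{n\delta}$, and reverses the order of multiplication, applying $\zeta$ to (\ref{com rel real and real 1}) produces (\ref{com rel real and real 2}), and similarly (\ref{com rel real and real 3}) produces (\ref{com rel real and real 4}) after tracking the super-signs.

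For the $\alpha_0$-statements, I would proceed by induction on the gap $m := i - j \geq 1$, with $j$ fixed. The base case $m = 1$ (so $r = 0$ and the sum on the right of (\ref{com rel real and real 1}) is empty) asserts
$$E_{(j+1)\delta+\alpha_0} E_{j\delta+\alpha_0} + q^{-2} E_{j\delta+\alpha_0} E_{(j+1)\delta+\alpha_0} = 0.$$
I would verify this by substituting $(q - q^{-1}) E_{(j+1)\delta+\alpha_0} = E_{j\delta+\alpha_0} E_\delta - E_\delta E_{j\delta+\alpha_0}$ from (\ref{real root}) into both terms of the left-hand side and then using the $i = 1$ instance of Lemma \ref{R I}, namely $E_\delta E_{j\delta+\alpha_0} = E_{j\delta+\alpha_0} E_\delta - (q - q^{-1}) E_{(j+1)\delta+\alpha_0}$, to move $E_\delta$ past $E_{j\delta+\alpha_0}$; the resulting expression collapses after routine cancellation.

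For the inductive step with $m \geq 2$, I would expand
$$(q - q^{-1}) E_{i\delta+\alpha_0} E_{j\delta+\alpha_0} = E_{(i-1)\delta+\alpha_0} E_\delta E_{j\delta+\alpha_0} - E_\delta E_{(i-1)\delta+\alpha_0} E_{j\delta+\alpha_0}$$
using (\ref{real root}), commute the interior $E_\delta$ past $E_{j\delta+\alpha_0}$ via the $i = 1$ case of (\ref{com rel real and im 1}), and apply the inductive hypothesis to the two products $E_{(i-1)\delta+\alpha_0} E_{j\delta+\alpha_0}$ (gap $m-1$) and $E_{(i-1)\delta+\alpha_0} E_{(j+1)\delta+\alpha_0}$ (gap $m-2$). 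Setting $T_{i,j} := E_{i\delta+\alpha_0} E_{j\delta+\alpha_0} + q^{-2} E_{j\delta+\alpha_0} E_{i\delta+\alpha_0}$, the manipulation yields a one-step recurrence expressing $T_{i,j}$ in terms of $T_{i-1,\,j+1}$ and an explicit correction proportional to $E_{(j+1)\delta+\alpha_0} E_{(i-1)\delta+\alpha_0}$. Iterating this recurrence telescopes into precisely the sum displayed in (\ref{com rel real and real 1}) or (\ref{com rel real and real 3}); the parity split arises because the iteration terminates either at the anticommutator $T_{j+1,\,j}$ handled by the base case (yielding the odd case (i)), or at the diagonal $T_{r+j,\,r+j} = (1 + q^{-2}) E_{(r+j)\delta+\alpha_0}^2$ (yielding the distinguished square term in the even case (iii)).

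The main obstacle is not conceptual but combinatorial: keeping track of the alternating signs $(-1)^\ell$, the accumulated factors $q^{-2\ell}$, and the correct truncation index of the telescoping sum. A convenient implementation is to show by a secondary induction that each step of the recurrence for $T_{i,j}$ contributes exactly one factor of $-q^{-2}(q^2 - q^{-2})$, so that the coefficient of $E_{(j+\ell)\delta+\alpha_0} E_{(i-\ell)\delta+\alpha_0}$ in the final expression becomes $(-1)^\ell q^{-2\ell}(q^2 - q^{-2})$, matching the stated formulas. The boundary coefficient $(-1)^{r+1} q^{j-i+1}(q + q^{-1})$ in the even case is then simply the residual scalar obtained when the last telescoping step meets the diagonal $T_{r+j,\,r+j}$. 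Once (i) and (iii) are established in this way, the $\zeta$-transfer to (ii) and (iv) is immediate.
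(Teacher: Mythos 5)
The paper states this lemma without proof (these are the super-analogues of Damiani's convexity relations, taken over from the KLT framework), so your argument has to stand on its own. Its overall architecture --- induct on the gap $i-j$, telescope a one-step recurrence for $T_{i,j}=E_{i\delta+\alpha_0}E_{j\delta+\alpha_0}+q^{-2}E_{j\delta+\alpha_0}E_{i\delta+\alpha_0}$, split by parity according to whether the telescoping terminates at the gap-one case or at the diagonal, and transfer to the $\alpha_1$ relations with $\zeta$ --- is the right shape. But the base case as you describe it does not work, and this is a genuine gap rather than a bookkeeping issue. Write $a=E_{j\delta+\alpha_0}$, $d=E_\delta$, $b=E_{(j+1)\delta+\alpha_0}$. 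The only inputs you allow yourself are the recursion $(q-q^{-1})b=ad-da$ from (\ref{real root}) and the $i=1$ instance of Lemma \ref{R I}, which is the \emph{same} identity $da=ad-(q-q^{-1})b$ rearranged. Substituting and pushing $d$ to the right gives
\begin{equation*}
(q-q^{-1})\,T_{j+1,j}=(ad-da)a+q^{-2}a(ad-da)=(q-q^{-1})\bigl(ba+q^{-2}ab\bigr)=(q-q^{-1})\,T_{j+1,j},
\end{equation*}
a tautology: nothing collapses to zero. Nor can it, since $T_{j+1,j}=0$ is false in the free superalgebra on $A,B$ with the $E$'s defined by the same recursions, so no argument using only the recursion can prove it.

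The actual content of the base case is the cubic $q$-Serre relation, which your proposal never invokes anywhere --- that is the structural symptom of the problem. Concretely, for $j=0$ a direct computation with $E_\delta=q^{-2}BA+AB$ gives
\begin{equation*}
E_{\delta+\alpha_0}A+q^{-2}AE_{\delta+\alpha_0}=\frac{q^{-2}}{q-q^{-1}}\bigl(A^3B+\{3\}_qA^2BA-\{3\}_qABA^2-BA^3\bigr),
\end{equation*}
so the $j=0$ base case \emph{is} relation (\ref{serre 1}) in the form of the element $J^{+}$ of Definition \ref{J+J-}. For $j\geq 1$ the base case is relation (\ref{q real and real 1}) of Lemma \ref{R R 3}, which is itself nontrivial and needs its own induction on $j$, interleaved with the relations of Lemma \ref{R I} for general $i$ (not just $i=1$) and with the lower instances of the present lemma. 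To repair the proof you would either establish the gap-one anticommutation for all $j$ first along those lines, or run a single simultaneous induction over all the relations of Lemmas \ref{R I}--\ref{R R 3} in the style of Damiani. The $\zeta$-transfer to (\ref{com rel real and real 2}) and (\ref{com rel real and real 4}) and the telescoping scheme for the inductive step are fine in principle once the base is secured.
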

The corresponding Catalan version is the following
\begin{coro}
	For $i>j \geq 0$, the following   relations hold in $\mathbb{V}$, namely,
	
	\text{\rm(i)} Assume that $i-j=2 r+1$ is odd, then
	\begin{gather*}
		\frac{q\left(x C^{s}_i\right) \star\left(x C^{s}_j\right)+q^{-1}\left(x C^{s}_j\right) \star\left(x C^{s}_i\right)}{q^2-q^{-2}}=\sum_{\ell=1}^r (-1)^{\ell}q^{1-2 \ell}\left(x C^{s}_{j+\ell}\right) \star\left(x C^{s}_{i-\ell}\right),\\
		\frac{q\left(C^{s}_j y\right) \star\left(C^{s}_i y\right)+q^{-1}\left(C^{s}_i y\right) \star\left(C^{s}_j y\right)}{q^2-q^{-2}}=\sum_{\ell=1}^r(-1)^{\ell} q^{1-2 \ell}\left(C^{s}_{i-\ell} y\right) \star\left(C^{s}_{j+\ell} y\right);
	\end{gather*}
	
	\text{\rm(ii)} Assume that $i-j=2 r$ is even, then
	\begin{equation*}
		\begin{gathered}
			\frac{q\left(x C^{s}_i\right) \star\left(x C^{s}_j\right)+q^{-1}\left(x C^{s}_j\right) \star\left(x C^{s}_i\right)}{q^2-q^{-2}}+(-1)^{r}\frac{q^{j-i+2}\left(x C^{s}_{j+r}\right) \star\left(x C^{s}_{i-r}\right)}{q-q^{-1}} \\
			=\sum_{\ell=1}^{r-1}(-1)^{\ell} q^{1-2 \ell}\left(x C^{s}_{j+\ell}\right) \star\left(x C^{s}_{i-\ell}\right),
		\end{gathered}
	\end{equation*}
	\begin{equation*}
		\begin{gathered}
			\frac{q\left(C^{s}_j y\right) \star\left(C^{s}_i y\right)+q^{-1}\left(C^{s}_i y\right) \star\left(C^{s}_j y\right)}{q^2-q^{-2}}+(-1)^{r}\frac{q^{j-i+2}\left(C^{s}_{i-r} y\right) \star\left(C^{s}_{j+r} y\right)}{q-q^{-1}} \\
			=\sum_{\ell=1}^{r-1}(-1)^{\ell} q^{1-2 \ell}\left(C^{s}_{i-\ell} y\right) \star\left(C^{s}_{j+\ell} y\right).
		\end{gathered}
	\end{equation*}
\end{coro}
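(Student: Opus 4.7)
The plan is to pull back the four identities in Lemma \ref{R R} via the Rosso embedding $\varphi : U_q^{+}(C(2)^{(2)}) \hookrightarrow \mathbb{V}$. Since $\varphi$ is an injective superalgebra homomorphism, it converts ordinary products in $U_q^{+}$ into $\star$-products in $\mathbb{V}$, and the resulting identities will hold in $\mathbb{V}$. The concrete substitutions come from Theorem \ref{Main thm 1}:
\begin{gather*}
\varphi(E_{n\delta+\alpha_0}) = q^{-2n}(q+q^{-1})^{2n}\, xC^{s}_n, \qquad
\varphi(E_{n\delta+\alpha_1}) = q^{-2n}(q+q^{-1})^{2n}\, C^{s}_n y.
\end{gather*}

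For the odd case (i), apply $\varphi$ to \eqref{com rel real and real 1}. Every bilinear term $E_{m\delta+\alpha_0}E_{n\delta+\alpha_0}$ in that relation satisfies $m+n=i+j$, so each maps to the same scalar multiple $q^{-2(i+j)}(q+q^{-1})^{2(i+j)}$ of the corresponding $\star$-product. Cancelling this common prefactor, multiplying the resulting identity by $q$, and dividing by $q^{2}-q^{-2}$ produces precisely the first formula in part (i). The $y$-side formula in (i) is obtained in one of two equivalent ways: either by repeating the argument for \eqref{com rel real and real 2}, or by applying the antiautomorphism $\zeta$ of $\mathbb{V}$, which fixes each $C^{s}_n$, swaps $x\leftrightarrow y$, and reverses the order of $\star$-products (so that $xC^{s}_n\mapsto C^{s}_n y$ and $(xC^{s}_m)\star(xC^{s}_n)\mapsto (C^{s}_n y)\star(C^{s}_m y)$, permuting the indices exactly as needed).

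For the even case (ii), the bilinear terms and the sum in \eqref{com rel real and real 3} are handled in the same manner, but the quadratic term $E_{(r+j)\delta+\alpha_0}^{2}$ requires extra care. Because $(r+j)+(r+j)=i+j$, its image shares the same common prefactor, so after cancellation the scalar $(-1)^{r+1}q^{j-i+1}(q+q^{-1})$ contributes an additional $(q+q^{-1})$ beyond what the other terms carry. Using $q^{2}-q^{-2}=(q-q^{-1})(q+q^{-1})$ to rewrite $(q+q^{-1})/(q^{2}-q^{-2}) = 1/(q-q^{-1})$, and moving the quadratic term to the other side (which flips $(-1)^{r+1}$ to $(-1)^{r}$), produces the isolated summand $(-1)^{r}q^{j-i+2}(xC^{s}_{j+r})\star(xC^{s}_{i-r})/(q-q^{-1})$ on the left of the stated identity; note $i-r=j+r$, so $(xC^{s}_{j+r})\star(xC^{s}_{i-r}) = (xC^{s}_{r+j})\star(xC^{s}_{r+j})$. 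The $y$-side formula in (ii) then follows either directly from \eqref{com rel real and real 4} by the same calculation or by applying $\zeta$ to the $x$-side identity just obtained.

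The main obstacle is the bookkeeping in the even case: three distinct sources of $(q+q^{-1})$-factors (the prefactors from $\varphi(E^{2}_{(r+j)\delta+\alpha_0})$, the coefficient $(q+q^{-1})$ in front of $E^{2}_{(r+j)\delta+\alpha_0}$, and the denominator $q^{2}-q^{-2}$) must be combined correctly, together with the sign flip and the identification $i-r=j+r$, so that the final expression matches the stated form with exactly the power $q^{j-i+2}$ and denominator $q-q^{-1}$. Once this reconciliation is performed cleanly, injectivity of $\varphi$ immediately upgrades the resulting equalities from $U_q^{+}$ to identities in $\mathbb{V}$, completing the proof.
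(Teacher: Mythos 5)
Your proposal is correct and follows exactly the route the paper intends: the corollary is stated as an immediate consequence of applying the embedding $\varphi$ from Theorem \ref{Main thm 1} to the relations of Lemma \ref{R R}, cancelling the common prefactor $q^{-2(i+j)}(q+q^{-1})^{2(i+j)}$, and clearing denominators. Your careful bookkeeping of the quadratic term in the even case (the extra $(q+q^{-1})$ absorbed into $q^{2}-q^{-2}$ to leave $q-q^{-1}$, the sign flip, and the identification $i-r=j+r$) supplies precisely the detail the paper omits.
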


\begin{lemma} \label{R R 2}
For $i, j \in \mathbb{N}$, the following   relations hold in $U_q^{+}(C(2)^{(2)})$,
\begin{gather}
E_{i \delta+\alpha_0} E_{(j+1) \delta}-E_{(j+1) \delta} E_{i \delta+\alpha_0}=-q^2 E_{(i+1) \delta+\alpha_0} E_{j \delta}+q^{-2} E_{j \delta} E_{(i+1) \delta+\alpha_0},
\label{com rel real and im 3}
\end{gather}
\begin{gather}
E_{(j+1) \delta} E_{i \delta+\alpha_1}-E_{i \delta+\alpha_1} E_{(j+1) \delta}=-q^2 E_{j \delta} E_{(i+1) \delta+\alpha_1}+q^{-2} E_{(i+1) \delta+\alpha_1} E_{j \delta} .
\label{com rel real and im 4}
\end{gather}
\end{lemma}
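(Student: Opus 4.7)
The plan is to derive both identities directly from Lemma~\ref{R I}. For (\ref{com rel real and im 3}) I apply relation (\ref{com rel real and im 1}) twice: once with first index $j+1$ and second index $i$, to convert the commutator $E_{i\delta+\alpha_0} E_{(j+1)\delta} - E_{(j+1)\delta} E_{i\delta+\alpha_0}$ on the LHS into an explicit finite sum, and once with first index $j$ and second index $i+1$, to expand the term $E_{j\delta}\,E_{(i+1)\delta+\alpha_0}$ that appears on the RHS. Each of these expansions produces one scalar multiple of the top-weight vector $E_{(i+j+1)\delta+\alpha_0}$ together with a sum of products: in the LHS expansion a sum of $E_{(i+\ell)\delta+\alpha_0}\,E_{(j+1-\ell)\delta}$ over $\ell=1,\dots,j$, and in the RHS expansion a sum of $E_{(i+1+\ell)\delta+\alpha_0}\,E_{(j-\ell)\delta}$ over $\ell=1,\dots,j-1$.

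The identity then reduces to a coefficient match, which I carry out by reindexing the RHS sum via $\ell \mapsto \ell-1$. This converts it into a sum with the same general term $E_{(i+\ell)\delta+\alpha_0}\,E_{(j+1-\ell)\delta}$ as the LHS sum, now with $\ell$ running from $2$ to $j$; the accompanying shift $(-1)^\ell q^{-2\ell} \mapsto -q^2 (-1)^\ell q^{-2\ell}$ combines with the $q^{-2}$ prefactor on the RHS of (\ref{com rel real and im 3}) to reproduce precisely the coefficient $q^2(q^2-q^{-2})$ present on the LHS. The missing $\ell=1$ term on the RHS is supplied by the isolated combination $q^{-2}E_{(i+1)\delta+\alpha_0}E_{j\delta} - q^2 E_{(i+1)\delta+\alpha_0}E_{j\delta} = -(q^2-q^{-2}) E_{(i+1)\delta+\alpha_0}E_{j\delta}$, which equals the $\ell=1$ contribution from the LHS sum. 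Finally, the coefficients of $E_{(i+j+1)\delta+\alpha_0}$ on both sides agree via the elementary identity $q^{-2}\cdot(-1)^j q^{2-2j}(q-q^{-1}) = (-1)^j q^{-2j}(q-q^{-1})$.

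Identity (\ref{com rel real and im 4}) follows at once from (\ref{com rel real and im 3}) by applying the antiautomorphism $\zeta$ of Lemma~\ref{anti}. Since $\zeta$ is $\mathbb{Q}(q)$-linear, reverses multiplication, fixes each $E_{n\delta}$, and swaps $E_{n\delta+\alpha_0}$ with $E_{n\delta+\alpha_1}$, applying it term-by-term converts (\ref{com rel real and im 3}) directly into (\ref{com rel real and im 4}) with no further computation. The main obstacle in the whole argument is purely combinatorial: tracking signs, powers of $q$, and the endpoints of summation during the reindexing step; no genuinely new algebraic input beyond Lemma~\ref{R I} and the symmetry furnished by $\zeta$ is needed.
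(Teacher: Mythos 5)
Your derivation is correct, and since the paper states Lemma~\ref{R R 2} without any proof, your argument actually fills a gap rather than duplicating one. I checked the computation: applying (\ref{com rel real and im 1}) with imaginary index $j+1$ against $E_{i\delta+\alpha_0}$ gives
$E_{i \delta+\alpha_0} E_{(j+1) \delta}-E_{(j+1) \delta} E_{i \delta+\alpha_0}=(-1)^{j} q^{-2 j}(q-q^{-1}) E_{(i+j+1) \delta+\alpha_0}+q^{2}(q^{2}-q^{-2}) \sum_{\ell=1}^{j}(-1)^{\ell} q^{-2 \ell} E_{(i+\ell) \delta+\alpha_0} E_{(j+1-\ell) \delta}$,
and expanding $E_{j\delta}E_{(i+1)\delta+\alpha_0}$ on the right-hand side of (\ref{com rel real and im 3}) via the same relation, then shifting $\ell\mapsto\ell-1$, reproduces exactly this expression: the isolated terms give the $\ell=1$ summand $-(q^{2}-q^{-2})E_{(i+1)\delta+\alpha_0}E_{j\delta}$, the reindexed sum supplies $\ell=2,\dots,j$, and the top-weight coefficients agree since $q^{-2}\cdot(-1)^{j}q^{2-2j}(q-q^{-1})=(-1)^{j}q^{-2j}(q-q^{-1})$. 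The passage to (\ref{com rel real and im 4}) via $\zeta$ is also clean: since each $E_{n\delta}$ is even and each $E_{n\delta+\alpha_i}$ is odd, no super-sign intervenes when $\zeta$ reverses these particular products, and the corollary following Lemma~\ref{anti} gives exactly the substitutions you need. Two small points worth flagging: (i) the case $j=0$ requires the convention $E_{0\delta}=-(q+q^{-1})^{-1}$ (forced by (\ref{im cor}) and $C^{s}_0=1$), under which the right-hand side reduces to $(q-q^{-1})E_{(i+1)\delta+\alpha_0}$ and the identity becomes the defining recursion (\ref{real root}); your second application of Lemma~\ref{R I} needs imaginary index $\geq 1$, so this boundary case should be handled separately as above; (ii) your argument is of course only as strong as Lemma~\ref{R I}, which the paper also asserts without proof, but taking that lemma as given is legitimate here.
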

The corresponding Catalan version is the following
\begin{coro}
	For $i, j \in \mathbb{N}$, the following   relations hold in $\mathbb{V}$,
	\begin{gather*}
		\left(x C^{s}_i\right) \star C^{s}_{j+1}-C^{s}_{j+1} \star\left(x C^{s}_i\right)=-q^2\left(x C^{s}_{i+1}\right) \star C^{s}_j+q^{-2} C^{s}_j \star\left(x C^{s}_{i+1}\right),\\
		C^{s}_{j+1} \star\left(C^{s}_i y\right)-\left(C^{s}_i y\right) \star C^{s}_{j+1}=-q^2 C^{s}_j \star\left(C^{s}_{i+1} y\right)+q^{-2}\left(C^{s}_{i+1} y\right) \star C^{s}_j .
	\end{gather*}
\end{coro}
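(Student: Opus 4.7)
The plan is to apply the injective superalgebra homomorphism $\varphi\colon U_q^{+}(C(2)^{(2)}) \to \mathbb{V}$ (the Rosso embedding) to both identities of Lemma \ref{R R 2} and to use the explicit formulas of Theorem \ref{Main thm 1} to translate each side into a relation among Catalan elements. Since $\varphi$ is a superalgebra homomorphism, products of root vectors in $U_q^{+}(C(2)^{(2)})$ map to $q$-shuffle superproducts $\star$ in $\mathbb{V}$, so the corollary will follow as soon as the accompanying scalar coefficients are matched.

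Concretely, for equation (\ref{com rel real and im 3}) I would substitute
$\varphi(E_{i\delta+\alpha_0}) = q^{-2i}(q+q^{-1})^{2i}\, xC_i^s$,
$\varphi(E_{(j+1)\delta}) = -q^{-2(j+1)}(q+q^{-1})^{2j+1}\, C_{j+1}^s$,
$\varphi(E_{(i+1)\delta+\alpha_0}) = q^{-2(i+1)}(q+q^{-1})^{2i+2}\, xC_{i+1}^s$, and
$\varphi(E_{j\delta}) = -q^{-2j}(q+q^{-1})^{2j-1}\, C_j^s$. A direct exponent count shows that after the substitution every term on both sides carries the factor $(q+q^{-1})^{2i+2j+1}$ and the overall $q$-power on the left is $-q^{-2i-2j-2}$; multiplying the resulting equality through by the nonzero scalar $-q^{2i+2j+2}(q+q^{-1})^{-(2i+2j+1)}$ yields precisely the first displayed identity of the corollary.

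The second identity, involving $C_n^s y$, follows either by an entirely parallel calculation starting from equation (\ref{com rel real and im 4}), or more elegantly by applying the antiautomorphism $\zeta$ of $\mathbb{V}$ to the first identity: recall that $\zeta$ swaps $x$ and $y$, fixes every $C_n^s$, and reverses the order of $\star$-products, so it transforms $(xC_i^s) \star C_{j+1}^s$ into $C_{j+1}^s \star (C_i^s y)$ and the other three monomials correspondingly, converting the first line of the corollary line-by-line into the second.

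The main (and essentially only) obstacle is the careful bookkeeping of $q$-exponents and of the sign contributions arising from the $-1$ factors in $\varphi(E_{n\delta})$; all four monomials on both sides must combine to produce the same common prefactor. Once this scalar matching is verified, injectivity of $\varphi$ and the homomorphism property immediately transport the relation from $U_q^{+}(C(2)^{(2)})$ to $\mathbb{V}$, completing the proof.
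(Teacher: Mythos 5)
Your proposal is correct and takes essentially the same route as the paper, which obtains this corollary precisely by applying $\varphi$ to Lemma \ref{R R 2}, substituting the images from Theorem \ref{Main thm 1}, and cancelling the common nonzero scalar $-q^{-2i-2j-2}\left(q+q^{-1}\right)^{2i+2j+1}$, with the second line obtained from the first via the antiautomorphism $\zeta$. The only point worth flagging is the edge case $j=0$: there $E_{j\delta}$ is not among the defined PBW generators, so one should either treat $j=0$ separately via (\ref{real root}) or extend (\ref{im cor}) to $n=0$ by the convention $E_{0\delta}\mapsto -\left(q+q^{-1}\right)^{-1}$ (consistent with $C^{s}_0=1$), after which your scalar bookkeeping goes through unchanged.
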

\begin{lemma} \label{R R 3}
	The following   relations hold in $U_q^{+}(C(2)^{(2)})$, namely,
\begin{equation}
	\begin{aligned}
		& q E_{(i+1) \delta+\alpha_0} E_{i \delta+\alpha_0}=-q^{-1} E_{i \delta+\alpha_0} E_{(i+1) \delta+\alpha_0}, \\
		& q E_{i \delta+\alpha_1} E_{(i+1) \delta+\alpha_1}=-q^{-1} E_{(i+1) \delta+\alpha_1} E_{i \delta+\alpha_1}.
	\end{aligned}
\label{q real and real 1}
\end{equation}
For distinct $i, j \in \mathbb{N}$, one has
\begin{equation}
\begin{aligned}
	q E_{(i+1) \delta+\alpha_0} E_{j \delta+\alpha_0} & +q^{-1} E_{j \delta+\alpha_0} E_{(i+1) \delta+\alpha_0} \\
	& =-q^{-1} E_{i \delta+\alpha_0} E_{(j+1) \delta+\alpha_0}-q E_{(j+1) \delta+\alpha_0} E_{i \delta+\alpha_0},
\end{aligned}
\label{q real and real 2}
\end{equation}
\begin{equation}
\begin{aligned}
	q E_{j \delta+\alpha_1} E_{(i+1) \delta+\alpha_1} & +q^{-1} E_{(i+1) \delta+\alpha_1} E_{j \delta+\alpha_1} \\
	& =-q^{-1} E_{(j+1) \delta+\alpha_1} E_{i \delta+\alpha_1}-q E_{i \delta+\alpha_1} E_{(j+1) \delta+\alpha_1}.
\end{aligned}
\label{q real and rea 3}
\end{equation}
\end{lemma}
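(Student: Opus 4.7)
The first assertion (\ref{q real and real 1}) is the degenerate adjacent-index case of Lemma \ref{R R}(i). Substituting $(i,j) \mapsto (i+1, i)$ in (\ref{com rel real and real 1}) forces $r = 0$ and kills the summation, leaving $E_{(i+1)\delta+\alpha_0}E_{i\delta+\alpha_0} = -q^{-2}E_{i\delta+\alpha_0}E_{(i+1)\delta+\alpha_0}$; multiplication by $q$ yields the first identity. The second identity follows in the same way from (\ref{com rel real and real 2}), or equivalently by applying the antiautomorphism $\zeta$ of Lemma \ref{anti}, which interchanges the two families of real-root vectors.

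For the second assertion (\ref{q real and real 2}), the $\zeta$-symmetry again lets us focus on the $\alpha_0$-version. The identity is also symmetric under the swap $i \leftrightarrow j$ (which sends the left-hand side to the negative of the right-hand side and vice versa), so we may assume $i < j$. The plan is to use Lemma \ref{R R} to normally order each of the four bilinear products $E_{(i+1)\delta+\alpha_0}E_{j\delta+\alpha_0}$, $E_{j\delta+\alpha_0}E_{(i+1)\delta+\alpha_0}$, $E_{i\delta+\alpha_0}E_{(j+1)\delta+\alpha_0}$, $E_{(j+1)\delta+\alpha_0}E_{i\delta+\alpha_0}$, invoking whichever of part (i) or part (ii) matches the parity of the index gap. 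Because the gaps $j - (i+1)$ and $(j+1) - i$ have opposite parities, one side uses part (i) (which contributes a telescoping sum of lower-order correction terms) while the other uses part (ii) (which in addition contributes a diagonal quadratic term). After combining the $q$-factors in front, the four normally ordered contributions are then shown to add up to zero.

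The main obstacle is keeping track of the diagonal correction $(-1)^{r+1}q^{j-i+1}(q+q^{-1})E_{(r+j)\delta+\alpha_0}^2$ supplied by Lemma \ref{R R}(ii) and matching it against the telescoping corrections from the opposite expansion; particular care is needed when $j = i+1$ or $j = i+2$, where several indices collapse. A conceptually cleaner alternative is to translate (\ref{q real and real 2}) via the Rosso embedding $\varphi$ of Theorem \ref{Main thm 1}, whose scalar prefactors $q^{-2n}(q+q^{-1})^{2n}$ cancel uniformly, turning (\ref{q real and real 2}) into the $q$-shuffle super-identity
\[q(xC^s_{i+1})\star(xC^s_j) + q^{-1}(xC^s_j)\star(xC^s_{i+1}) = -q^{-1}(xC^s_i)\star(xC^s_{j+1}) - q(xC^s_{j+1})\star(xC^s_i),\]
which can be established by induction on $i+j$ using the recursive formulas for $xC^s_{k+1}$ derived in Section 3.3 together with the Catalan commutation identities stated as corollaries to Lemmas \ref{R I} and \ref{R R 2}. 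The second identity of (\ref{q real and real 2}) then follows by applying $\zeta$.
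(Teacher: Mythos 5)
The paper states Lemma \ref{R R 3} without proof, so there is no proof of record to compare against; I assess your argument on its own terms. Your derivation of (\ref{q real and real 1}) from the adjacent-index case of Lemma \ref{R R}(i) (where $r=0$ kills the sum) is correct, as are the $\zeta$-symmetry and the $i\leftrightarrow j$ antisymmetry reductions for the remaining identities.

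Your derivation of (\ref{q real and real 2}) has two concrete problems. First, the parity claim is false: the gaps $j-(i+1)$ and $(j+1)-i$ differ by $2$ and therefore have the \emph{same} parity, so it never happens that one side of (\ref{q real and real 2}) is handled by Lemma \ref{R R}(i) while the other is handled by Lemma \ref{R R}(ii). Either both sides telescope via part (i) (when $j-i$ is even; there the computation does close), or both sides require part (ii) and each contributes a diagonal square term. Second, in that latter case the computation does not close with Lemma \ref{R R}(ii) as stated. Take $i=0$, $j=1$: the left side of (\ref{q real and real 2}) is $(q+q^{-1})E_{\delta+\alpha_0}^2$, while part (ii) applied with $(i,j,r)=(2,0,1)$ gives $E_{2\delta+\alpha_0}E_{\alpha_0}=-q^{-2}E_{\alpha_0}E_{2\delta+\alpha_0}+q^{-1}(q+q^{-1})E_{\delta+\alpha_0}^2$, so the right side evaluates to $-(q+q^{-1})E_{\delta+\alpha_0}^2$. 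Since $\varphi(E_{\delta+\alpha_0}^2)$ is a nonzero scalar multiple of $(xxy)\star(xxy)=(1-q^2)\,xxyxxy+\cdots\neq 0$, the two sides are not equal. A direct shuffle computation shows that (\ref{q real and real 2}) itself is true for $i=0,j=1$; the incompatibility lies in the sign $(-1)^{r+1}$ on the diagonal term of Lemma \ref{R R}(ii) as printed, which would have to be $(-1)^{r}$ for your cancellation to occur. So your main route, taken literally, proves the negative of the desired identity whenever $j-i$ is odd. The fallback you offer — establishing the equivalent shuffle identity by induction on $i+j$ — is only named, not carried out, and the corollaries you propose to feed into it are themselves consequences of unproved lemmas of the same family, so it does not close the gap.
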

The corresponding Catalan version is the following
\begin{coro} The following   relations are true in $\mathbb V$\,$:$
\begin{gather*}
	q\left(x C^{s}_{i+1}\right) \star\left(x C^{s}_i\right)=-q^{-1}\left(x C^{s}_i\right) \star\left(x C^{s}_{i+1}\right),\\
	q\left(C^{s}_i y\right) \star\left(C^{s}_{i+1} y\right)=-q^{-1}\left(C^{s}_{i+1} y\right) \star\left(C^{s}_i y\right).
\end{gather*}
	For distinct $i, j \in \mathbb{N}$, one has
\begin{gather*}
	q\left(x C^{s}_{i+1}\right) \star\left(x C^{s}_j\right)+q^{-1}\left(x C^{s}_j\right) \star\left(x C^{s}_{i+1}\right)=-q^{-1}\left(x C^{s}_i\right) \star\left(x C^{s}_{j+1}\right)-q\left(x C^{s}_{j+1}\right) \star\left(x C^{s}_i\right),\\
	q\left(C^{s}_j y\right) \star\left(C^{s}_{i+1} y\right)+q^{-1}\left(C^{s}_{i+1} y\right) \star\left(C^{s}_j y\right)=-q^{-1}\left(C^{s}_{j+1} y\right) \star\left(C^{s}_i y\right)-q\left(C^{s}_i y\right) \star\left(C^{s}_{j+1} y\right).
\end{gather*}
\end{coro}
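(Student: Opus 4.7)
The plan is to transfer each identity of Lemma \ref{R R 3} across the Rosso embedding $\varphi:U_q^{+}(C(2)^{(2)})\to \mathbb{V}$. Since $\varphi$ is a superalgebra homomorphism, every equality among products of root vectors on the $U_q^{+}$ side yields the corresponding equality among $\star$-products of images on the $\mathbb{V}$ side. Concretely, I would invoke Theorem \ref{Main thm 1}, which records $\varphi(E_{n\delta+\alpha_0})=q^{-2n}(q+q^{-1})^{2n}\,xC^{s}_n$ and $\varphi(E_{n\delta+\alpha_1})=q^{-2n}(q+q^{-1})^{2n}\,C^{s}_n y$, and rewrite each root-vector product as a scalar multiple of a $\star$-product of Catalan elements.

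For the first pair of identities (\ref{q real and real 1}), both monomials on each line involve the indices $\{i,i+1\}$ in opposite orders, so applying $\varphi$ produces the common prefactor $q^{-4i-2}(q+q^{-1})^{4i+2}$ on both sides; after cancelling this scalar, the $\alpha_0$-relation becomes $q(xC^{s}_{i+1})\star(xC^{s}_i)=-q^{-1}(xC^{s}_i)\star(xC^{s}_{i+1})$, and the $\alpha_1$-relation becomes its $C^{s}_n y$ analogue, as stated.

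For the distinct-$i,j$ identities (\ref{q real and real 2}) and (\ref{q real and rea 3}), the four monomials $E_{(i+1)\delta+\alpha_0}E_{j\delta+\alpha_0}$, $E_{j\delta+\alpha_0}E_{(i+1)\delta+\alpha_0}$, $E_{i\delta+\alpha_0}E_{(j+1)\delta+\alpha_0}$, $E_{(j+1)\delta+\alpha_0}E_{i\delta+\alpha_0}$ all carry total index $i+j+1$, so their four images share the same scalar factor $q^{-2i-2j-2}(q+q^{-1})^{2i+2j+2}$. Dividing through by this common scalar produces precisely the asserted Catalan identity, and the $\alpha_1$-case is entirely parallel.

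No genuine obstacle is expected here: Lemma \ref{R R 3} furnishes the identity on the $U_q^{+}$ side, and the remaining work reduces to checking that the homogeneity of each monomial makes the scalar prefactors coincide so that they can be factored out. Injectivity of $\varphi$ is not even required, since we are only computing the equality of images; the bookkeeping of the $q$-prefactors is the entire content of the proof.
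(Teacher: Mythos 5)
Your proposal is correct and is exactly the paper's (implicit) argument: the paper derives each ``Catalan version'' corollary by applying the homomorphism $\varphi$ of Theorem \ref{Main thm 1} to the corresponding relation in Lemma \ref{R R 3}, and the scalar prefactors $q^{-2n}(q+q^{-1})^{2n}$ combine to a common factor on every monomial because each product carries the same total index, so they cancel. Your bookkeeping of those prefactors, and your remark that only the homomorphism property (not injectivity) of $\varphi$ is needed, are both accurate.
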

\begin{coro}
	For $k, \ell \in \mathbb{N}$, one has
\begin{gather}
		\label{xCk+l+1}
	x C^{s}_{k+\ell+1}=\frac{\left(x C^{s}_{\ell}\right) \star\left(x C^{s}_k y\right)-\left(x C^{s}_k y\right) \star\left(x C^{s}_{\ell}\right)}{q+q^{-1}},\\
	\label{Ck+l+1y}
	C^{s}_{k+\ell+1} y=\frac{\left(x C^{s}_k y\right) \star\left(C^{s}_{\ell} y\right)-\left(C^{s}_{\ell} y\right) \star\left(x C^{s}_k y\right)}{q-q^{-1}} .
\end{gather}
\end{coro}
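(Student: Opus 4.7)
The plan is to derive both identities by pulling back from the superalgebra $U_q^{+}(C(2)^{(2)})$ via the Rosso embedding $\varphi$, using the commutation relations of Lemma \ref{com rel real and im} between the KLT Beck generator $E_{k\delta}^{\mathrm{B}}$ and the real root vectors $E_{\ell\delta+\alpha_0}$, $E_{\ell\delta+\alpha_1}$, then translating each term through Theorems \ref{Main thm 1} and \ref{Main thm 2}. This is analogous in spirit to the derivation of (\ref{xCk+l+Ck+ly}) from (\ref{com real and beck im 1})--(\ref{com real and beck im 2}), but using Beck-type imaginary root vectors rather than the Damiani ones.

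First I would apply $\varphi$ to the identity
\begin{equation*}
[E_{\ell\delta+\alpha_0},\, E_{k\delta}^{\mathrm{B}}] = (-1)^{k-1}\,\frac{q^{k(\alpha,\alpha)}-q^{-k(\alpha,\alpha)}}{k(q+q^{-1})}\,E_{(k+\ell)\delta+\alpha_0}
\end{equation*}
from Lemma \ref{com rel real and im}. Since $E_{k\delta}^{\mathrm{B}}$ lies in the even part of the superalgebra, the super-commutator is just the ordinary commutator, which translates under $\varphi$ to the shuffle commutator $\varphi(E_{\ell\delta+\alpha_0})\star\varphi(E_{k\delta}^{\mathrm{B}})-\varphi(E_{k\delta}^{\mathrm{B}})\star\varphi(E_{\ell\delta+\alpha_0})$. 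Substituting the images $\varphi(E_{\ell\delta+\alpha_0})=q^{-2\ell}(q+q^{-1})^{2\ell}\,xC^{s}_{\ell}$ from Theorem \ref{Main thm 1} and $\varphi(E_{k\delta}^{\mathrm{B}})=(-1)^k\frac{\{2k\}_q}{k}q^{-2k}(q+q^{-1})^{2k-1}\,xC^{s}_{k-1}y$ from Theorem \ref{Main thm 2}, and similarly for $\varphi(E_{(k+\ell)\delta+\alpha_0})=q^{-2(k+\ell)}(q+q^{-1})^{2(k+\ell)}\,xC^{s}_{k+\ell}$, the entire identity becomes an equation in $\mathbb{V}$ whose scalar factors match up cleanly once one uses $q^{k(\alpha,\alpha)}-q^{-k(\alpha,\alpha)} = -(q+q^{-1})\{2k\}_q$ (for $(\alpha,\alpha)=2$). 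Cancelling the common factor $(-1)^k\frac{\{2k\}_q}{k}q^{-2(k+\ell)}(q+q^{-1})^{2k+2\ell-1}$ from both sides and reindexing $k-1\rightsquigarrow k$ yields (\ref{xCk+l+1}). The second identity (\ref{Ck+l+1y}) is obtained by the same procedure applied to the analogous commutator $[E_{k\delta}^{\mathrm{B}},\, E_{\ell\delta+\alpha_1}]$, using now the image $\varphi(E_{\ell\delta+\alpha_1})=q^{-2\ell}(q+q^{-1})^{2\ell}\,C^{s}_{\ell}y$.

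The main obstacle is really only bookkeeping: tracking the numerous powers of $q$, the $(q+q^{-1})$-factors, the sign $(-1)^k$, and especially the conversion $q^{k(\alpha,\alpha)}-q^{-k(\alpha,\alpha)} = -(q+q^{-1})\{2k\}_q$, in order to confirm that everything collapses to a single denominator $(q+q^{-1})$ on the right-hand side after cancellation. There is no genuinely new structural input required beyond Theorems \ref{Main thm 1}, \ref{Main thm 2}, and Lemma \ref{com rel real and im}; alternatively, since Step 2 of the proof of Theorem \ref{Main thm 2} already established $\mathcal{C}^{s}_{k+1} = xC^{s}_{k}y$, one can simply substitute this into the identity (\ref{xCk+l+Ck+ly}) and shift the index $k\mapsto k+1$ to obtain both claims in one stroke, which is perhaps the most economical way to present the argument.
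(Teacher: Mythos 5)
Your second, ``economical'' route is exactly what the paper does implicitly: the corollary is nothing more than the identity (\ref{xCk+l+Ck+ly}) from the proof of Theorem \ref{Main thm 2} (itself obtained by applying $\varphi$ to Lemma \ref{com rel real and im} and matching scalars, which is your first route) combined with the identification $\mathcal{C}^{s}_{k+1}=xC^{s}_{k}y$ from Step~2, so the proposal is correct and follows the paper's approach. One remark: your derivation yields the denominator $q+q^{-1}$ in \emph{both} identities, whereas the stated (\ref{Ck+l+1y}) has $q-q^{-1}$; since the $\ell=0$ case must reduce to (\ref{Ck+1y}) (denominator $q+q^{-1}$) and applying the antiautomorphism $\zeta$ to (\ref{xCk+l+1}) also gives $q+q^{-1}$, the $q-q^{-1}$ in the statement is evidently a typo and your version is the correct one.
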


\subsection{Equitable  presentation of a minimal bosonization of $U_q(C(2)^{(2)})$}
According to Definition 8 of the quantum affine super-algebra $U_q(C(2)^{(2)})$ (\cite{KLT}), we present its bosonization by the grade involution $P$.

It should be noted that the $\mathbb{Z}_2$-grading of $U_q(C(2)^{(2)})$ can be explicitly realized. This is accomplished by introducing the grade involution $P$ into the set of generators and stipulating that the generators of even-degree commute with $P$, whereas the generators of odd-degree anticommute with it. As a result, the quantum affine superalgebra  $U_q(C(2)^{(2)})$ can be bosonized into an unital associative $\mathbb K$-algebra. This algebra is generated by the elements $k_{\mathrm{d}}^{ \pm 1}:=q^{ \pm h_{\mathrm{d}}}, k_\alpha^{ \pm 1}:=q^{ \pm h_\alpha}, k_{\delta-\alpha}^{ \pm 1}:=q^{ \pm h_{\delta-\alpha}}, e_{ \pm \alpha}, e_{ \pm(\delta-a)}$ and the involution $P$, all of which adhere to the additional relations:
\begin{gather*}
\left\{P, e_{ \pm \alpha}\right\}=0,\quad \left\{P, e_{ \pm(\delta-a)}\right\}=0,\quad \left[P, k_\alpha^{ \pm 1}\right]=0,\quad \left[P, k_{\delta-\alpha}^{ \pm 1}\right]=0,\quad P^2=1.
\end{gather*}

We obtain an equitable presentation for a minimal bosonization of the quantum affine superalgebra $U_q(C(2)^{(2)})$, which is given below.
 \begin{proposition}The bosonization of the quantum affine superalgebra $U_q(C(2)^{(2)})$ is isomorphic to the unital associative $\mathbb{K}$-algebra with generators $X_i,\; X_i^{-1},\; Y_i, \;Z_i$ $(i=0,\; 1)$, $\Omega$ and the following relations
$$
\begin{aligned}
	& X_i X_i^{-1}=X_i^{-1} X_i=1, \quad \Omega^2=1, \\
	& \frac{q Y_i X_i+q^{-1} X_i Y_i}{q+q^{-1}}=1, \\
	& \frac{q X_i Z_i+q^{-1} Z_i X_i}{q+q^{-1}}=1, \\
	& \frac{q Y_i Z_i+q^{-1} Z_i Y_i}{q+q^{-1}}=1, \\
	& \frac{q Y_i Z_j+q^{-1} Z_j Y_i}{q+q^{-1}}=1, \\
	& Y_i^3 Y_j-\{3\}_q Y_i^2 Y_j Y_i+\{3\}_q Y_i Y_j Y_i^2-Y_j Y_i^3=0, \quad i \neq j, \\
	& Z_i^3 Z_j-\{3\}_q Z_i^2 Z_j Z_i+\{3\}_q Z_i Z_j Z_i^2-Z_j Z_i^3=0, \quad i \neq j,
\end{aligned}
$$
together with the following   relations related to $P:$
\begin{gather*}
X_i \Omega+\Omega X_i=2 X_i \Omega, \quad Y_i \Omega+\Omega Y_i=2 X_i^{-1} \Omega, \quad Z_i \Omega+\Omega Z_i=2 X_i^{-1} \Omega.
\end{gather*}
An isomorphism with the bosonization is given by
$$
\begin{aligned}
 X_i^{ \pm} 	&\mapsto k_i^{ \pm} P, \\
\Omega 	& \mapsto P, \\
 Y_i 	&\mapsto k_i^{-1} P-q\left(q+q^{-1}\right) e_i^{+} k_i^{-1} P, \\
 Z_i 	&\mapsto k_i^{-1} P+\left(q+q^{-1}\right) e_i^{-} P.
\end{aligned}
$$
The inverse of this isomorphism is given by
$$
\begin{aligned}
	k_i^ \pm &\mapsto X_i^ \pm \Omega , \hfill \\
P 	&\mapsto \Omega , \hfill \\
	e_i^ +  &\mapsto {q^{ - 1}}{(q +{q^{ - 1}})^{ - 1}}\left( {1 - {Y_i}{X_i}} \right), \hfill \\
	e_i^ -  &\mapsto {(q + {q^{ - 1}})^{ - 1}}\left( {{Z_i}\Omega  - {X_i}^{ - 1}\Omega } \right). \hfill \\
\end{aligned}
$$
 \end{proposition}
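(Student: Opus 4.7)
The plan is to mimic the strategy of the proof of Proposition 5, namely to verify directly that the two stated maps are inverse algebra homomorphisms. The proposition is entirely presentational in nature, so no conceptual input is needed beyond the defining relations of the bosonization $\widetilde U_q(C(2)^{(2)})$ (obtained from the relations in Definition 8 plus the $P$-relations $\{P,e_{\pm\alpha}\}=\{P,e_{\pm(\delta-\alpha)}\}=0$, $[P,k_\gamma^{\pm 1}]=0$ and $P^2=1$); everything reduces to substitution and manipulation.

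First, I would check that the forward assignment $X_i^{\pm1}\mapsto k_i^{\pm1}P$, $\Omega\mapsto P$, $Y_i\mapsto k_i^{-1}P - q(q-q^{-1})e_i^{+}k_i^{-1}P$, $Z_i\mapsto k_i^{-1}P + (q+q^{-1})e_i^{-}P$ respects each relation of the equitable algebra. The relations involving $\Omega$ are the easiest: since $k_\gamma$ is even and $e_{\pm\beta}$ is odd, one computes $\Omega X_i = X_i\Omega$, $Y_i\Omega+\Omega Y_i = 2k_i^{-1} = 2X_i^{-1}\Omega$, and analogously for $Z_i$, using only $P^2=1$ and $Pe_{\pm\beta} = -e_{\pm\beta}P$. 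For the quadratic relations, each reduces (after clearing $P$'s) to the identity $[e_\alpha,e_{-\alpha}]_{+}=(k_\alpha-k_\alpha^{-1})/(q+q^{-1})$ and the $k_\gamma e_{\pm\beta}k_\gamma^{-1} = q^{\pm(\gamma,\beta)}e_{\pm\beta}$ commutations. For example, $qY_iX_i+q^{-1}X_iY_i = (q+q^{-1})\cdot 1$ follows after substitution, using $k_i e_i^{+}k_i^{-1}=q^{(\alpha_i,\alpha_i)}e_i^{+}$ and the eigenvalue $(\alpha_i,\alpha_i)$ giving the correct cancellation.

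The main obstacle, as anticipated already for the non-super case in Proposition 5, will be verifying the two cubic Serre-type relations
\[ Y_i^{3}Y_j-\{3\}_q Y_i^{2}Y_jY_i+\{3\}_q Y_iY_jY_i^{2}-Y_jY_i^{3}=0,\qquad i\ne j,\]
and the analogous one for $Z_i$. After substituting the images, every cubic monomial in the $Y$'s expands into a sum of weight-$(\alpha_i^{-1}\alpha_j^{-1})$ terms in $k_i,k_j,e_i^{+},e_j^{+},P$; combining them produces an expression whose vanishing is precisely the $q$-Serre relation \eqref{serre 1} of $U_q(C(2)^{(2)})$ (after using Proposition 16, i.e.\ the shuffle-level equivalents \eqref{serre shuffle 1}--\eqref{serre shuffle 2}, to rearrange). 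The parity of $e_i^{+}$ together with the occurrences of $P$ produces exactly the sign pattern $\{3\}_q = (q^{-3}+q^3)/(q+q^{-1})$ matching the super brackets $[\cdot,\cdot]_q$ appearing in \eqref{serre 1}. I would carry out this reduction by first pushing all $P$'s to the right (using $Pe_i^{+} = -e_i^{+}P$ and $Pk_i = k_i P$), then collecting the resulting polynomial in $e_i^{+}$ according to degree; the constant-in-$e^{+}$ and linear-in-$e^{+}$ parts vanish by the commutation $k_\gamma e_{\pm\beta}k_\gamma^{-1} = q^{\pm(\gamma,\beta)}e_{\pm\beta}$, the quadratic part vanishes by the eigenvalue identities, and the cubic-in-$e^{+}$ part is exactly the Serre relation.

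Finally, I would verify that the inverse assignment $k_i^{\pm}\mapsto X_i^{\pm}\Omega$, $P\mapsto\Omega$, $e_i^{+}\mapsto q^{-1}(q-q^{-1})^{-1}(1-Y_iX_i)$, $e_i^{-}\mapsto (q+q^{-1})^{-1}(Z_i\Omega-X_i^{-1}\Omega)$ respects the relations of the bosonized $U_q(C(2)^{(2)})$. The relations with $P$ and the commutations $k_\gamma e_{\pm\beta}k_\gamma^{-1} = q^{\pm(\gamma,\beta)}e_{\pm\beta}$ follow directly from the quadratic equitable relations; the super-commutator relation $[e_\alpha,e_{-\alpha}]=(k_\alpha-k_\alpha^{-1})/(q+q^{-1})$ follows from combining the $(1-Y_iX_i)$ and $(Z_i\Omega-X_i^{-1}\Omega)$ expressions and using $\Omega^2=1$; and the Serre relations in the form stated in \eqref{serre 1}--\eqref{serre 2} follow from the cubic equitable relations by rewriting in terms of $e^{\pm}$. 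Composing the two maps on generators then gives the identity (each substitution is a one-line check), completing the isomorphism. Apart from the cubic Serre step, every verification is of the same short, mechanical type as in the equitable presentation of $\mathcal U_q(\widehat{\mathfrak{sl}_2})$ proved in Proposition 5.
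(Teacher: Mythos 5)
Your proposal is correct and follows exactly the route the paper takes: the paper's entire proof is the one-line remark that "one readily checks that each map is a homomorphism of $\mathbb{K}$-algebras and that the maps are inverses," and your direct relation-by-relation verification (with the cubic Serre relations reducing to \eqref{serre 1}--\eqref{serre 2} after pushing $P$ and the $k$'s through) is precisely what that check amounts to. The only quibble is a miscount in your sketch of the Serre step --- the top-degree part of $Y_i^3Y_j$ is quartic, not cubic, in the $e^{+}$'s --- but this does not affect the argument.
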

\begin{proof}
 One readily checks that each map is a homomorphism of $\mathbb{K}$-algebras and that the maps are inverses. It follows that each map is an isomorphism of $\mathbb{K}$-algebras.
\end{proof}
We will also just list Definition \ref{iquantum2} and Theorem \ref{embedding2}, the details will appear in \cite{ZH1}.
 \begin{defi}\label{iquantum2}
 	Let $\mathbb K=\mathbb{Q}(q)$ denote the field of rational functions in an indeterminate $q$ and let $c \in \mathbb{Q}(q)$ such that $c(1)=1$. The $q$-Onsager superalgebra $\mathcal{B}^{s}_c$ is the unital $\mathbb{K}$-algebra generated by two elements $B_0, B_1$ with the defining relations
 $$
 \begin{aligned}
 	\sum_{i=0}^3(-1)^i\left\{\begin{array}{l}
 		3 \\
 		i
 	\end{array}\right\}_q B_0^{3-i} B_1 B_0^i & =q c\left(q-q^{-1}\right)^2\left(B_0 B_1-B_1 B_0\right), \\
 	\sum_{i=0}^3(-1)^i\left\{\begin{array}{l}
 		3 \\
 		i
 	\end{array}\right\}_q B_1{ }^{3-i} B_0 B_1{ }^i & =q c\left(q-q^{-1}\right)^2\left(B_1 B_0-B_0 B_1\right).
 \end{aligned}
 $$
 \end{defi}	
 \begin{theorem}\label{embedding2}
 	There exists a superalgebra embedding
 	\begin{gather*}
 		\iota: \mathcal{B}^{s}_c \rightarrow U_q(C(2)^{(2)}) \quad \text { with } \quad \iota\left(B_i\right)=F_i-c E_i K_i^{-1}, \quad \text { for } i \in\{0,1\}.
 	\end{gather*}
 \end{theorem}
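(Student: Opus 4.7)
The plan is first to verify that the images $b_i := F_i - cE_iK_i^{-1}$ satisfy the two defining super-Onsager relations of Definition \ref{iquantum2} so that $\iota$ extends to a well-defined superalgebra homomorphism, and second to establish injectivity by a PBW/filtration argument.

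\textbf{Step 1 (Well-definedness).} Set $b_i := F_i - cE_iK_i^{-1}$ in $U_q(C(2)^{(2)})$ for $i=0,1$. To check the two cubic relations of Definition \ref{iquantum2} with $B_i$ replaced by $b_i$, I expand each left-hand side as a sum of monomials in the factors $F$ and $-cEK^{-1}$, organized by total $c$-degree $0\le d\le 4$. The $c^{0}$ coefficient is the pure cubic super-$q$-Serre relation for $F_0,F_1$ and so vanishes; similarly the $c^{4}$ coefficient, once the $K^{-1}$-factors are collected using $K_i E_j = q^{-(\alpha_i,\alpha_j)} E_j K_i$, reduces (up to an overall $K$-factor) to the cubic super-$q$-Serre relation for $E_0,E_1$ and vanishes. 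The $c^{1}, c^{2}, c^{3}$ coefficients on the left must match the corresponding coefficients of the right-hand side $qc(q-q^{-1})^{2}(b_{j}b_{1-j}-b_{1-j}b_{j})$; these are reduced by pushing commutators through the monomials, using $[E_i,F_i]=(K_i-K_i^{-1})/(q+q^{-1})$, the vanishing cross super-commutators $[E_1,F_0]=[E_0,F_1]=0$, and the $K$-conjugation rules. Careful bookkeeping of super-signs and $q$-powers produces precisely the right-hand side.

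\textbf{Step 2 (Injectivity).} Equip $U_q(C(2)^{(2)})$ with the increasing filtration $\mathcal{F}_{\bullet}$ defined by $\deg F_i=1$, $\deg E_i=0$, $\deg K_i^{\pm 1}=0$. Then $b_i\in\mathcal{F}_{1}$ with principal symbol $F_i$. In the associated graded algebra the defining relations of $\mathcal{B}^{s}_c$ collapse to the cubic super-$q$-Serre relation for $F_0, F_1$, since the Onsager right-hand side lies in $\mathcal{F}_{2}$ and is therefore of strictly lower order than the $\mathcal{F}_{4}$-leading term on the left. Consequently, $\mathrm{gr}\,\iota$ factors through the embedding $U_q^{-}(C(2)^{(2)})\hookrightarrow U_q(C(2)^{(2)})$ sending $B_i\mapsto F_i$. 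A comparison of the PBW basis of $\mathcal{B}^{s}_c$ coming from the root-vector construction sketched in \cite{ZH1} with the PBW basis of $U_q^{-}(C(2)^{(2)})$ obtained by applying the antiinvolution $\ddagger$ to the KLT basis then shows $\mathrm{gr}\,\iota$ is an isomorphism onto $U_q^{-}(C(2)^{(2)})$, so $\iota$ is injective.

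\textbf{Main obstacle.} The principal technical difficulty is Step 1. Because the root $\delta-\alpha$ is odd, every commutator involving $F_0$ or $E_0$ is in fact a super-anticommutator, and the denominator $q+q^{-1}$ appearing in $[E_i,F_i]$ differs from the $q-q^{-1}$ on the right-hand side of the Onsager relation. Tracking the interaction between these super-signs, the alternating super-$q$-binomial coefficients of the cubic relation, and the $q$-powers generated when $K$-factors cross $E_j$ or $F_j$ requires a delicate term-by-term computation at each $c$-degree. For Step 2, the essential nontrivial input is the existence of a PBW basis of $\mathcal{B}^{s}_c$ of the expected size; this is exactly what is established in \cite{ZH1}, and may be assumed here.
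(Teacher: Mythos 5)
The paper does not actually prove Theorem \ref{embedding2}: it is stated without proof, with the explicit remark that ``the details will appear in \cite{ZH1}.'' So there is no argument in this paper to compare yours against. Your two-step strategy --- verify that $b_i=F_i-cE_iK_i^{-1}$ satisfies the two cubic relations of Definition \ref{iquantum2}, then deduce injectivity from a filtration whose associated graded identifies the leading terms with PBW monomials of the negative part --- is the standard route for $q$-Onsager-type coideal embeddings and is surely the intended one, so the approach itself is sound.

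That said, as written the proposal is an outline rather than a proof, and the two places where it could actually fail are exactly the two places you defer. In Step 1, the $c^{1},c^{2},c^{3}$ coefficients are the entire substance of well-definedness: this is where the odd parity of $\delta-\alpha$, the denominator $q+q^{-1}$ in $[e_\alpha,e_{-\alpha}]$, and the specific normalization $qc(q-q^{-1})^{2}$ on the right-hand side must all be seen to be consistent, and ``careful bookkeeping\dots produces precisely the right-hand side'' asserts the conclusion without performing the computation (the $c^{4}$ coefficient likewise needs the explicit check that $K$-conjugation twists the super-$q$-Serre expression into another valid form of it, which the paper's Remark 9 makes plausible but does not automatic). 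In Step 2 there is also a small logical wrinkle: to conclude injectivity you need a \emph{spanning} set of $\mathcal{B}^{s}_c$ indexed compatibly with the KLT PBW basis of $U_q^{-}$, together with the computation that the images of these spanning monomials have linearly independent leading symbols; phrasing this as ``a comparison of the PBW basis of $\mathcal{B}^{s}_c$'' with that of $U_q^{-}$ presupposes a basis of $\mathcal{B}^{s}_c$ whose existence is usually \emph{proved} by this very leading-term argument. Both missing ingredients live in the unpublished \cite{ZH1}, so your sketch cannot be completed from the material in this paper alone.
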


\begin{remark}
	We can also do the similar thing for $U_q[\operatorname{osp}(1|2)^{(1)}]$,  and construct the alternating $PBW$ basis in our next and third paper $($\cite{ZH1, ZH2}$)$.
\end{remark}
	\section*{\textbf{ACKNOWLEDGMENTS}}
The paper is supported by the NNSFC (Grant No.
12171155) and in part by the Science and Technology Commission of Shanghai Municipality (No. 22DZ2229014)

\section*{\textbf{AUTHOR DECLARATIONS}}
\subsection*{\textbf{Conflict of Interest}}
The authors have no conflicts to disclose.
\subsection*{\textbf{Author Contributions}}
\textbf{Xin Zhong}: Conceptualization (equal) Formal analysis (equal) Methodology (equal) Writing - original draft (equal).
\textbf{Naihong Hu}: Supervision (equal) Validation (equal) Writing - review and editing (equal).

\section*{\textbf{DATA AVAILABILITY}}
All data that support the findings of this study are included within the article.

\end{document}